\numberwithin{equation}{section}
\newtheorem{claim}{\bf \t}[part]
\newtheorem{theorem}{Theorem}[section]
\newtheorem{lemma}{Lemma}[section]
\newtheorem{proposition}[theorem]{Proposition}
\newtheorem{remark}{Remark}[section]
\def\XXint #1#2#3{{\setbox 0=\hbox {$#1{#2#3}{\int }$}
		\vcenter {\hbox {$#2#3$}}\kern -.5\wd 0}}
\def\t#1{\tilde{#1}} 
\def\R{{\mathbb{R}}}
\def\ba{\begin{aligned}}
	\def\ea{\end{aligned}}
\def\be{\begin{equation}}
\def\ee{\end{equation}}
\def\bes{\begin{mysubequations}}
	\def\ees{\end{mysubequations}}
\def\lb#1{\|#1\|_{L^2}}
\def\ol{{\Omega_L}}
\def\slb{{S_L^+}}
\def\cl#1{\overline{#1}}
\def\ra{\rightarrow}
\def\g{\nabla}
\def\a{\alpha}
\def\fai{\phi}
\def\O{\Omega}
\def\f{\frac}
\def\p{\partial}
\def\bt{\begin{theorem}}
	\def\et{\end{theorem}}
\def\ba{\begin{array}}
	\def\ea{\end{array}}
\def\bl{\begin{lemma}}
	\def\el{\end{lemma}}
\def\bes{\begin{eqnarray}}
\def\ees{\end{eqnarray}}
\begin{document}

\title[Low Mach Number Limit of Steady Euler Flows in Multi-Dimensional Nozzles]
{Low Mach Number Limit of Steady Euler Flows in Multi-Dimensional Nozzles}

\author{Mingjie Li}
\address{M.J. Li, College of Science, Minzu University of China, Beijing 100081, P. R. China}
\email{lmjmath@163.com}

	\author{Tian-Yi Wang}
	\address{T.-Y. Wang, Department of Mathematics, School of Science, Wuhan University of Technology,
		Wuhan, Hubei 430070, P. R. China;
		Gran Sasso Science Institute, viale Francesco Crispi, 7, 67100 L'Aquila, Italy}
	\email{tianyiwang@whut.edu.cn; tian-yi.wang@gssi.infn.it; wangtianyi@amss.ac.cn}
	
	\author{Wei Xiang}
	\address{W. Xiang, City University of Hong Kong, Kowloon Tong, Hong Kong, P. R. China}
	\email{weixiang@cityu.edu.hk}

\date{\today}

\begin{abstract}
In this paper, we consider the steady irrotational Euler flows in multidimensional nozzles. The first rigorous proof on the existence and uniqueness of the incompressible flow is provided. Then, we justify the corresponding low Mach number limit,  which is the first result of the low Mach number limit on the steady Euler flows. We establish several uniform estimates, which does not depend on the Mach number, to validate the convergence of the compressible flow with extra force to the corresponding incompressible flow, which is free from the extra force effect, as the Mach number goes to zero. The limit is on the H\"{o}lder space and is unique. 
Moreover, the convergence rate is of order $\varepsilon^2$,  which is higher than the ones in the previous results on the low Mach number limit for the unsteady flow.
\end{abstract}

\keywords{
Multidimensional,
low Mach number limit,
steady flow,
homentropic Euler equations,
convergence rate
}
\subjclass[2010]{
35Q31; 
35L65; 
76N15; 
35B40; 
}

\maketitle

\section{Introduction}

Incompressible and compressible Euler equations are fundamental equations in fluid dynamics, which describe the motion of two different objects, for example, water and gas. 
As one of the important topic in the mathematical theory of fluid dynamics, the incompressible limit is devoted to building a bridge to fill the gap between those two different type of fluids by determining in which sense that the compressible flows tend to the incompressible ones as the compressibility parameter tends to zero. 

One of the typical system to describe compressible flow is the steady homentropic Euler equations. It reads

\begin{eqnarray}\label{OrE}
\begin{cases}
	\mbox{div}(\rho u)=0,\\
	\mbox{div}(\rho u\otimes u)+\nabla p=\rho F,
\end{cases}
\end{eqnarray}
where $x=(x_1, \cdots, x_n)\in \R^n$, for $n\ge 2$, $u=(u_1,\cdots,u_n)\in \R^n$ is the fluid velocity, while $\rho$, $p$, and $F$ represent the density, pressure, and extra force respectively. The pressure is a function of density with:
\begin{equation}
p:=\frac{\tilde{p}(\rho)-\tilde{p}(1)}{\varepsilon^2},
\end{equation}
where $\varepsilon>0$ is the compressibility parameter as introduced in \cite{Schochet}. As the homentropic flow, we require
\begin{equation}\label{conditiononpressure}
\tilde{p}'(\rho)>0,  \quad 2\tilde{p}'(\rho) + \rho \tilde{p}''(\rho) > 0 \qquad \mbox{for $\rho>0$}.
\end{equation}
We remark that condition \eqref{conditiononpressure} holds for the flows which satisfy the polytropic gases that  $\tilde{p}= \rho^\gamma$ with $\gamma>1$,
and the isothermal flows that $\tilde{p}= \rho$.
The sound speed of the flow is $$c:=\sqrt{p'(\rho)}=\frac{\sqrt{\tilde{p}'(\rho)}}{\varepsilon},$$
and the Mach number is  defined as $$M:=\frac{|u|}{c}=\frac{\varepsilon |u|}{\sqrt{\tilde{p}'(\rho)}},$$ 
where
$$
|u|:=\Big(\sum_{i=1}^n u_i^2\Big)^{1/2}
$$
is the flow speed. The flow is subsonic when $M<1$, sonic when $M=1$, and supersonic when $M>1$.

Generally speaking, there are two process for deriving the incompressible fluid models from the respective compressible ones: one is that the compressible parameter $\varepsilon$ goes to zero, which is called as the low Mach number limit; and the other one is that the adiabatic exponent $\gamma$ goes to the infinity with the pressure defined as $p=\rho^\gamma$, see \cite{CHWX,  Masmoudi2}.

The first theory of the low Mach number limit is due to Janzen and Rayleigh (see \cite[Sect. 47]{Schiffer}, \cite{VD}), 
in which they are concerned with the steady irrotational flow. Their method of the expansion of solutions in power with respect to the Mach number was applied both as a computational tool and as a tool for the proof of existence of solutions.  
Klainerman and Majda \cite{KM1, KM2} proved the convergence of compressible flow to the incompressible flow by directly deriving estimates of solutions of the partial differential equations in the scaled form (also see Ebin \cite{Edin}). In particular, they established the incompressible limit
of smooth local solutions of the Euler equations (and the Navier-Stokes equations)
for compressible fluids with well-prepared initial data, {\it i.e.}, some smallness assumption on the divergence of initial velocity.
By using the fast decay property of acoustic waves, Ukai \cite{Ukai} verified the low Mach number limit for the general data. The exterior domain cases were considered in  \cite{Isozaki1}. 
The major breakthrough on the general initial data is due to M\'{e}tivier and Schochet \cite{MS}, in which they proved the low Mach number limit of the full Euler equations in the whole space by an elegant convergence lemma on acoustic waves. Later, Alazard \cite{alazard1} extended the result to the exterior domain problem. 
For the one dimensional Euler equations, the low Mach number limit has been proved under the $B.V.$ space in \cite{CCZ}.  For other related fluid models and problems, see \cite{DBDL, CDX,Feireisl-N, JJL3, JJLX, Lions P.-L.01, Masmoudi, MS,QX,Schochet} and the references therein.

One of classical problems on the steady flows is the infinitely long nozzle problem. Let  $\Omega\subset\mathbb{R}^n$ be an infinitely long nozzle, which is homomorphism on the unit cylinder $\textbf{C} = B(0,1)\times\mathbb{R}$ in $\mathbb{R}^n$. The compressible fluid fills in the region $\Omega$.   At the boundary $\partial\Omega$, the flow satisfies the slip boundary condition:
\begin{equation}\label{slipcondition}
u \cdot \textbf{n} =0 \qquad \mbox{on} ~\partial\Omega,
\end{equation}
where  $\textbf{n}$ is the unit outward normal to the region $\Omega$.  Due to $(\ref{OrE})_1$ and \eqref{slipcondition}, one can obtain the fixed mass flux property: on the arbitrary cross
section of the nozzle $S_0$
\begin{equation}\label{massfluxcondition}
\int_{S_0} \textbf{l}\cdot \rho  u ds=:m,
\end{equation}
where  $\textbf{l}$ is the unit outer
normal of the domain $S_0$. $m$ is called the mass flux. See Figure \ref{F1.1}.
\begin{figure}
	\centering
	\includegraphics[width=0.6\linewidth]{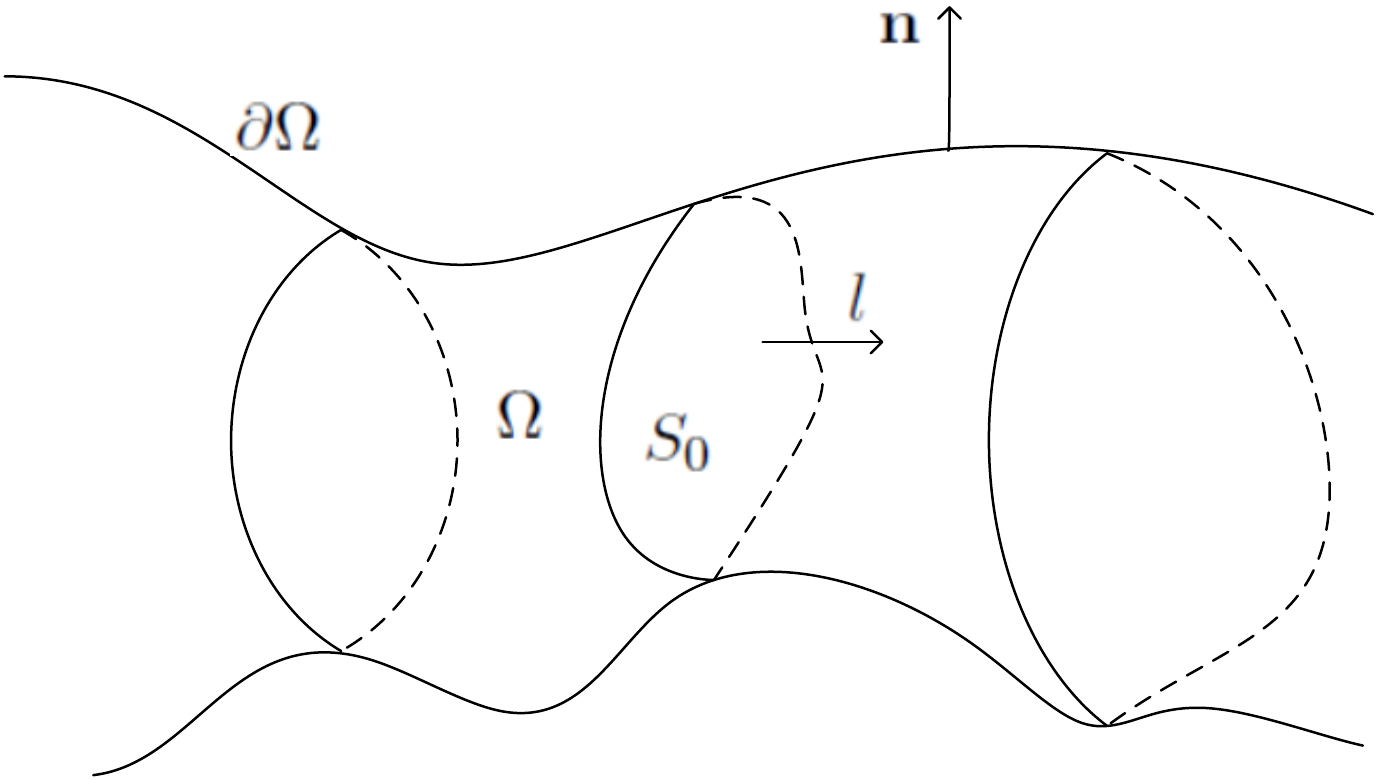}
	\caption{Infinitely long nozzle}
	\label{F1.1}
	\label{fig:boundedinfinitylongnozzle}
\end{figure}

Formally, if $|u|$ is bounded and $\sqrt{\tilde{p}'(\rho)}$ does not vanish, $\varepsilon$ is the leading term of Mach number $M$. For this reason, the limit $\varepsilon\rightarrow0$ is called the low Mach number limit\cite{KM1, KM2}. So, for the low Mach number limit, one should start from the case with sufficiently small Mach number, in which the flow is subsonic. 

For the compressible flow, the mathematical theory on global subsonic flow in an infinitely long nozzle of various cross-section was formulated by Bers  \cite{Bers4} in 1958. Then the first rigorous proof for the irrotational flow was achieved by Xie and Xin \cite{Xin1} by introducing the stream function. Later, they extended it to the  3D axis-asymmetric case  in \cite{Xin2}.  The theorem for general infinitely long nozzle
in $\R^n, n\geq 2$ was completed in Du-Xin-Yan \cite{Du-Yan-Xin}, while the result was extended in \cite{Gu-Wang1} to the extra force case.  
Besides the infinitely long nozzle problem, we also would like to mention the study of the other classical problem: the airfoil problem. 
Shiffman \cite{Shiffman2},  Bers \cite{Bers1, Bers3}, and Finn-Gilbarg \cite{Finn1} considered the two-dimensional irrotational subsonic flow. 
Finn and Gilbarg \cite{Gilbarg1} got the first result for three-dimensional subsonic flow past
an obstacle under some restrictions on the Mach number.
Then Dong \cite{Dong1} and Dong-Ou \cite{Dong2} extended these results to the case when Mach number $M < 1$ for arbitrarily dimensional case, while the case with conservative force effect was considered in \cite{Gu-Wang2}. The respective subsonic-sonic flow was considered in \cite{Huang-Wang-Wang}. For the rotational subsonic flows, one can refer \cite{Chen-Du-Xie-Xin,Chen-Huang-Wang,CHWX,CHWX1,DWX,DXX,Xin4}. 

The expected corresponding homogeneous incompressible Euler equations as $\varepsilon\rightarrow0$ are written as:

\begin{equation}\label{ICHE}
\begin{cases}
\mbox{div}\, u=0,\\
\mbox{div}\,(u\otimes u)+ \nabla p=F,
\end{cases}
\end{equation}
where $u=( u_1,\cdots, u_n)$ and $p$ represents the velocity and pressure, respectively, while density $\rho\equiv1$. 

For the problem of the incompressible flow in an infinitely long nozzle, the flow also satisfies the slip boundary condition \eqref{slipcondition}. Furthermore, from $(\ref{ICHE})_1$, the fixed mass flux property \eqref{massfluxcondition} holds with $\rho\equiv1$.

As far as we know, up to now there is no mathematical result on the multidimensional incompressible flow in an infinitely long nozzle. Hence, we need to develop the methods to obtain the first results on the multidimensional incompressible flow in an infinitely long nozzle.  We remark, unlike the airfoil problem, the solutions of infinitely long nozzle problem are not expected to decay to the given states in general, which means the variational approach could not applied directly. 
So we need to introduce approximate problems for both the incompressible and compressible case, and then to show the localized uniform estimates such as the average lemma and the higher order estimates which do not depend on the Mach number, which is a singular parameter in the low Mach number limit. All the uniform estimates are new. Moreover, to avoid the singularity arising from the sufficiently small Mach number, we need to introduce the elliptic cut-off carefully, which is different from the ones in \cite{Du-Yan-Xin, Gu-Wang1}.  

The rest of this paper is organized as follows. In Section 2, we formulate the problems mathematically and state the main theorems. Next, we introduce approximate problems and apply the variational approach to solve them in Section 3. In Section 4, uniform estimates and then the existence of modified flows are proven. Section 5 prove the uniqueness of  modified flows, and complete the existence and uniqueness of the incompressible flow (Theorem \ref{MainT1}). Finally, in Section 6, we complete the proof of the low Mach number limit (Theorem \ref{MainT2}). 

\section{Formulation of the problem and main theorems}
In this section we will formulate the problem concerned mathematically and introduce the main theorems of this paper.

First, we will give the basic assumptions on the multidimensional nozzle domain $\Omega$:
there exists an invertible
$C^{2,\alpha}$ map $T: \bar{\Omega}\rightarrow\mathbf{\bar{C}}:x\rightarrow
y$ which satisfies that
\begin{eqnarray}\label{H1}
\begin{cases}
T(\partial\Omega)=\partial \mathbf{\bar{C}},\\
\mbox{For any}\ k\in\mathbb{R},\
T(\Omega\cap\{x_n=k\})=B(0,1)\times\{y_n=k\},\\
\|T\|_{C^{2,\alpha}}+\|T^{-1}\|_{C^{2,\alpha}}\leq K<\infty,\\
\end{cases}
\end{eqnarray}
where $K$ is a uniform constant, $\mathbf{\bar{C}}=B(0,1)\times(-\infty,+\infty)$
is the cylinder in $\mathbb{R}^n$ with $B(0,1)$ being the unit ball in
$\mathbb{R}^{n-1}$ centering at the origin, $x_n$ is the
axial coordinate and $x'=(x_1,\cdots,x_{n-1})\in
\mathbb{R}^{n-1}$.

\begin{remark}
It is noticeable that there is no asymptotic restriction on $T$ as $x_n\rightarrow\pm\infty$, which implies $T$ could even be periodic respect to $x_n$. 
\end{remark}

For both the incompressible flow and the compressible flow, the boundary condition \eqref{slipcondition} and mass flux condition \eqref{massfluxcondition} hold.

Now, for any given mass flux $m$, we can introduce the problem in the multidimensional infinitely long nozzle mathematically for both the incompressible and compressible cases. 

\textbf{Problem ($m$)}. Let $n\geq 2$. Find functions $(\rho, u, p)$, which satisfies \eqref{OrE} or \eqref{ICHE}, with slip boundary condition \eqref{slipcondition} and mass flux condition \eqref{massfluxcondition}.
 
Then we will study \textbf{Problem ($m$)} for the incompressible and compressible cases separately.
 
 

\subsection{Problem $(m)$ for the incompressible case}
Let us consider the incompressible case first. The steady irrotational incompressible Euler flow is governed by the following equations: 
\begin{equation}\label{CHEB}
\begin{cases}
\mbox{div}\, \bar{u}=0,\\
\mbox{div}\,(\bar{u}\otimes \bar{u})+ \nabla \bar{p}=F,\\
\mbox{curl}\, \bar{u}=0.
\end{cases}
\end{equation}

Here, the conservative force $F$ can be written as $F=\nabla\phi$.
By $(\ref{CHEB})_3$, we have the Bernoulli law for the incompressible irrotational case that:
\begin{equation}
\nabla\left( \frac{|\bar{u}^2|}{2}+\bar{p}-\phi\right) =0,
\end{equation}
which equals to:
\begin{equation}\label{pbar}
\bar{p}=\phi-\frac{|\bar{u}^2|}{2}
\end{equation}
up to a constant.

Now we can introduce the following theorem for the incompressible case.
\begin{theorem}\label{MainT1}
	For any fixed $m>0$,  there exists a unique solution $(\bar{\rho}\equiv1, \bar{u}, \bar{p})$ of \textbf{Problem (m)} corresponding to equations \eqref{CHEB}, with $\bar{u} \in (C^{1,\alpha}(\Omega))^{n}$. Moreover, suppose 
	\begin{equation}
	\label{psicondition1}
	\phi\in L^\infty(\Omega)\qquad\text{and}\qquad \nabla\phi\in L^q_{loc}(\Omega)\qquad\text{for}\qquad q>n.
	\end{equation}
	Then $\bar{p} \in C^{\alpha}(\Omega)$.
\end{theorem}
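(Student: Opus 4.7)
The plan is to reduce \textbf{Problem $(m)$} for the incompressible irrotational system \eqref{CHEB} to a Neumann problem for a scalar velocity potential, and then to solve it by a truncation and uniform local estimates argument. Since the nozzle $\Omega$ is homeomorphic to the cylinder $\mathbf{C}$ and hence simply connected, the irrotationality condition $(\ref{CHEB})_3$ allows us to write $\bar{u}=\nabla\bar{\varphi}$ for some scalar potential $\bar{\varphi}$. The divergence-free condition $(\ref{CHEB})_1$ then becomes $\Delta\bar{\varphi}=0$ in $\Omega$, the slip condition \eqref{slipcondition} becomes the homogeneous Neumann condition $\partial_{\mathbf{n}}\bar{\varphi}=0$ on $\partial\Omega$, and the mass-flux condition \eqref{massfluxcondition} with $\bar{\rho}\equiv 1$ reads $\int_{S_0}\mathbf{l}\cdot\nabla\bar{\varphi}\,ds=m$ on every cross section.

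I would approximate this by a family of truncated problems on $\Omega_L:=\Omega\cap\{|x_n|<L\}$, imposing $\partial_{\mathbf{n}}\bar{\varphi}_L=0$ on the lateral part of $\partial\Omega_L$ together with, on the cross sections $S_L^\pm$, either constant Dirichlet data or a Neumann flux condition chosen so that the prescribed flux $m$ passes through each section (via a Lagrange multiplier in the variational formulation). For each fixed $L$ existence of $\bar{\varphi}_L$ is obtained by the direct method of the calculus of variations in a suitable affine Hilbert subspace of $H^1(\Omega_L)$, and straightening the lateral boundary by the map $T$ together with classical Schauder estimates for the mixed boundary value problem yields $\bar{\varphi}_L\in C^{2,\alpha}(\overline{\Omega_L})$.

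The main obstacle is obtaining $L$-independent \emph{local} estimates on $\nabla\bar{\varphi}_L$, since the global Dirichlet energy $\int_{\Omega_L}|\nabla\bar{\varphi}_L|^2\,dx$ grows linearly in $L$. I expect to prove an average lemma bounding $\int|\nabla\bar{\varphi}_L|^2$ on each fixed compact piece of the nozzle purely in terms of the flux $m$ and the geometric constant $K$ from \eqref{H1}, then combine this with Caccioppoli-type inequalities and Schauder estimates to get $L$-independent $C^{1,\alpha}_{loc}$ bounds on $\nabla\bar{\varphi}_L$. Because the cross section of $\Omega$ may twist arbitrarily as $|x_n|\to\infty$, there is no reference solution at infinity, so every estimate must be genuinely local and rest only on \eqref{H1}; this is the hardest step and the source of the new analytical content. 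A diagonal subsequence argument then extracts a limit $\bar{\varphi}$ with $\bar{u}:=\nabla\bar{\varphi}\in(C^{1,\alpha}(\Omega))^n$ solving the original Neumann problem.

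For uniqueness, I would compare two solutions by applying the same truncation plus uniform-estimates strategy to their difference, which is harmonic, satisfies a homogeneous Neumann condition, and carries zero flux across every cross section, and conclude that its gradient must vanish. Finally, once $\bar{u}$ is in hand, the pressure is recovered from the Bernoulli relation \eqref{pbar}, namely $\bar{p}=\phi-|\bar{u}|^2/2$ up to an additive constant. Under assumption \eqref{psicondition1}, Morrey's inequality gives $\phi\in C^{0,1-n/q}_{loc}(\Omega)$, while $\bar{u}\in(C^{1,\alpha}(\Omega))^n$ yields $|\bar{u}|^2\in C^{1,\alpha}(\Omega)$, so $\bar{p}\in C^{\alpha}(\Omega)$, possibly after replacing $\alpha$ by $\min\{\alpha,1-n/q\}$.
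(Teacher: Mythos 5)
Your proposal is correct and follows essentially the same route as the paper: reduction to a scalar potential solving a Neumann problem, truncation to $\Omega_L$ with a flux condition on the cross sections, the direct variational method, an $L$-independent local average estimate on $\int|\nabla\bar{\varphi}_L|^2$ over unit slabs (proved in the paper by a test-function-plus-iteration argument and correctly identified by you as the key new ingredient), elliptic regularity and a diagonal limit, uniqueness via the zero-flux harmonic difference, and recovery of $\bar{p}$ from the Bernoulli relation with Morrey's inequality giving the H\"older regularity of $\phi$. Your remark that the final exponent may need to be $\min\{\alpha,1-n/q\}$ is a correct refinement that the paper leaves implicit.
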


We remark that for the incompressible case, the velocity $\bar{u}$ is solved by solving the potential function $\bar{\varphi}$, which does not depend on $\phi$. It is different from the compressible case.

\subsection{Problem $(m)$ for the compressible case and the low Mach number limit} Now let us consider \textbf{Problem $(m)$} for the compressible case and the low Mach number limit.
The irrotational compressible Euler flow with low Mach number is governed by the following equations
\begin{equation}\label{CISEV}
\begin{cases}
\mbox{div}\,(\rho^{\varepsilon} u^{\varepsilon})=0,\\
\mbox{div}\,(\rho^{\varepsilon} u^{\varepsilon}\otimes u^{\varepsilon})+ \nabla p^{\varepsilon} =\rho^\varepsilon F,\\
\mbox{curl}\, u^{\varepsilon}=0,
\end{cases}
\end{equation}
where the conservative force $F=\nabla\phi$ and $
p^{\varepsilon}=p^{(\varepsilon)}(\rho^{\varepsilon})=\frac{\tilde{p}(\rho^{\varepsilon})-\tilde{p}(1)}{\varepsilon^2}$. The Mach number is defined as $M^\varepsilon=\frac{|u^\varepsilon|}{\sqrt{(p^{(\varepsilon)})'(\rho^\varepsilon)}}=\frac{\varepsilon|u^\varepsilon|}{\sqrt{\tilde{p}'(\rho^\varepsilon)}}$.

By $(\ref{CISEV})_3$, we have the following Bernoulli law that
\begin{equation}
\nabla\left(\frac{|u^\varepsilon|^2}{2}+\int^{\rho^{\varepsilon}}_1 \frac{(p^{(\varepsilon)})'(s)}{s} ds-\phi
\right)=0.
\end{equation}
which is equivalent to 
\begin{equation}\label{BLaw-1}
\frac{|u^\varepsilon|^2}{2}+\int^{\rho^{\varepsilon}}_1 \frac{(p^{(\varepsilon)})'(s)}{s} ds=\phi,
\end{equation}
up to a constant. Here, without loss of generality, we assume $0\leq\phi\leq\phi^\star$.
Moreover, let us introduce the rescaled enthalpy function $\tilde{h}$, which satisfies:
\begin{equation*}
\tilde{h}'(\rho)=\frac{\tilde{p}'(\rho)}{\rho}.
\end{equation*} 
Then, \eqref{BLaw-1} becomes
\begin{equation}
\frac{|u^\varepsilon|^2}{2}+h^{(\varepsilon)}(\rho^\varepsilon)-h^{(\varepsilon)}(1)
=\phi,
\end{equation}
where $h^{(\varepsilon)}(\rho)=\varepsilon^{-2} \tilde{h}(\rho)$. By \eqref{conditiononpressure}, we can see that $\tilde{h}(\rho)$ is a strictly increasing function with respect to $\rho$, so does $h^{(\varepsilon)}(\rho^\varepsilon)$.

Let 
$$
\tilde{H}(\rho):=\frac{\tilde{p}'(\rho)}{2}+\tilde{h}(\rho).
$$

Then for each fixed $\varepsilon$, we can introduce the critical density 
$\rho^\varepsilon_{cr}$ which satisfies that
\begin{equation}
\frac{1}{\varepsilon^2}\tilde{H}(\rho^\varepsilon_{cr})=\phi+\frac{1}{\varepsilon^2}\tilde{h}(1).
\end{equation}

So 
\begin{equation}
\rho^\varepsilon_{cr}(\phi)=\tilde{H}^{-1}\left(\tilde{h}(1)+\varepsilon^2 \phi\right)
\end{equation}
and the critical speed is 
\begin{equation}
q^\varepsilon_{cr}(\phi)=\frac{1}{\varepsilon}\sqrt{\tilde{p}'\circ\tilde{H}^{-1}\left(\tilde{h}(1)+\varepsilon^2 \phi\right)}.
\end{equation}
Obviously, the critical speed $q^{\varepsilon}_{cr}(\phi)$ will go to the infinity as $\varepsilon$ goes to zero.

It is easy to see that $|u^\varepsilon|< q^\varepsilon_{cr}(\phi)$ holds if and only if the flow is subsonic, \emph{i.e.}, $M^\varepsilon(\phi) < 1$. Similarly,  
for each $0<\theta<1$, there exists $q^\varepsilon_{\theta}(\phi)$ such that $|u^\varepsilon|\leq q^\varepsilon_{\theta}(\phi)$ holds if and only if $M^\varepsilon(\phi)\leq \theta$. Moreover, $q^{\varepsilon}_{\theta}(\phi)$ is monotonically increasing with respect to $\theta\in(0,1)$. In addition, for $\varepsilon>0$, both $\varepsilon q^\varepsilon_{cr}(\phi)$ and  $\varepsilon q^\varepsilon_{\theta}(\phi)$ are uniformly bounded with respect to $\varepsilon$. For the subsonic flow, density $\rho^{\varepsilon}$ can be represented as a function of   $|u^\varepsilon|^2$, \emph{i.e.},
\begin{equation}
\rho^\varepsilon=\rho^\varepsilon(|u^\varepsilon|^2, \phi)
=\tilde{h}^{-1}\left(\frac{\varepsilon^2\left(2\phi-|u^\varepsilon|^2\right)}{2}+\tilde{h}(1)\right).\label{2.21}
\end{equation}
For any fixed $0<\varepsilon<1$, when the flow is subsonic,  it holds that
\begin{equation}
\tilde{H}^{-1}\circ\tilde{h}(1)<\tilde{H}^{-1}\left(\tilde{h}(1)+\varepsilon^2 \phi \right) =\rho^\varepsilon_{cr}\leq\rho^\varepsilon(|u^\varepsilon|^2, \phi)\leq\tilde{h}^{-1}\left(\varepsilon^2 \phi +\tilde{h}(1)\right)<\tilde{h}^{-1}\left(\phi^\star+\tilde{h}(1)\right).\nonumber
\end{equation}

Finally, the low Mach number limit is the limit process when $\varepsilon\rightarrow0$. For the limit, we expect the compressible Euler flow will converge to the corresponding incompressible Euler flow.
Actually, we have the following result which is the main theorem of the paper on \textbf{Problem $(m)$} for the compressible case and the low Mach number limit.

\begin{theorem}\label{MainT2}
Suppose \eqref{psicondition1} holds.
For any fixed $m>0$, there exists constant $\varepsilon_c$ such that when $0<\varepsilon<\varepsilon_c$ there exists a unique solution $(\rho^\varepsilon, u^\varepsilon, p^\varepsilon)\in (C^{\alpha}(\Omega))^{n+2}$ of \textbf{Problem ($m$)} corresponding to equations \eqref{CISEV} with $M^{\varepsilon}<1$. $M^{\varepsilon}$ varies on $(0,1)$ as $\varepsilon$ varies on $(0,\varepsilon_{c})$. Furthermore, as $\varepsilon\rightarrow0$, we have that
\begin{equation}
\rho^{\varepsilon}=1+O(\varepsilon^2)
\qquad
u^{\varepsilon}=\bar{u}+O(\varepsilon^2)
\qquad
\nabla p^{\varepsilon}=\nabla\bar{p}+O(\varepsilon^2),
\qquad
M^{\varepsilon}=O(\varepsilon)
\end{equation}
where $(1, \bar{u}, \bar{p})$ is the classical solution of \textbf{ Problem ($m$)} corresponding to equations \eqref{CHEB} obtained in Theorem \ref{MainT1}.
\end{theorem}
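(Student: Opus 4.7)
The plan is to reduce \textbf{Problem (m)} for \eqref{CISEV} to a single scalar quasilinear elliptic equation for a velocity potential and then to parallel the construction used for $\bar\varphi$ in Theorem \ref{MainT1}, but with every estimate uniform in $\varepsilon$. Since $\mathrm{curl}\,u^\varepsilon = 0$ and $\Omega$ is simply connected, write $u^\varepsilon = \nabla\varphi^\varepsilon$; solving for $\rho^\varepsilon$ from Bernoulli's law in the form \eqref{2.21} and substituting into $(\ref{CISEV})_1$ gives
\[
\mathrm{div}\bigl(\rho^\varepsilon(|\nabla\varphi^\varepsilon|^2,\phi)\,\nabla\varphi^\varepsilon\bigr)=0\quad\text{in }\Omega,
\]
together with $\nabla\varphi^\varepsilon\cdot\mathbf n=0$ on $\partial\Omega$ and the mass-flux condition \eqref{massfluxcondition}. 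As long as $|\nabla\varphi^\varepsilon|<q^\varepsilon_{cr}(\phi)$ this operator is uniformly elliptic, and in the formal limit $\varepsilon\to 0$ it collapses to the linear potential equation solved in Theorem \ref{MainT1}.

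For existence I would truncate the nozzle to a finite piece $\Omega_L$ with end conditions compatible with the mass flux $m$, and modify the nonlinearity $\rho^\varepsilon(\cdot,\phi)$ by an elliptic cut-off whose ellipticity constants are independent of $\varepsilon$ and $L$. This is the refined cut-off the introduction flags: because $q^\varepsilon_{cr}(\phi)\to\infty$ as $\varepsilon\to 0$, the cut-offs of \cite{Du-Yan-Xin,Gu-Wang1} would introduce $\varepsilon^{-1}$-singular constants and must be replaced. The truncated cut-off problem is solved on $\Omega_L$ by the variational/Schauder scheme of Section 3. The core analytic input is then a package of $\varepsilon$- and $L$-independent estimates in the spirit of Section 4: an average lemma controlling the mean flow across cross-sections, localized energy bounds, and higher-order Hölder estimates at the slip boundary. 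Passing $L\to\infty$ by a diagonal extraction yields $\varphi^\varepsilon\in C^{2,\alpha}(\Omega)$ for each sufficiently small $\varepsilon$, and a posteriori $|\nabla\varphi^\varepsilon|<q^\varepsilon_{cr}$, so the cut-off is inactive and $M^\varepsilon<1$. Uniqueness follows by subtracting two solutions and applying a linearized maximum-principle / energy argument as in Section 5.

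For the range of $M^\varepsilon$, the uniform bound $|u^\varepsilon|\le C$ together with $c^\varepsilon=\varepsilon^{-1}\sqrt{\tilde p'(\rho^\varepsilon)}\to\infty$ already gives $M^\varepsilon=O(\varepsilon)$, hence $M^\varepsilon\to 0$ as $\varepsilon\to 0$. Defining $\varepsilon_c$ as the supremum of $\varepsilon$ for which the construction produces a subsonic solution and using continuity of $M^\varepsilon$ in $\varepsilon$, an intermediate-value argument then shows that $M^\varepsilon$ sweeps out all of $(0,1)$.

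For the convergence rate I would subtract the potential equations for $\varphi^\varepsilon$ and $\bar\varphi$. A Taylor expansion of \eqref{2.21} gives $\rho^\varepsilon(s,\phi)=1+\varepsilon^2 R(s,\phi)+O(\varepsilon^4)$ with $R$ smooth, so $w^\varepsilon:=\varphi^\varepsilon-\bar\varphi$ satisfies a linear uniformly elliptic divergence-form equation
\[
\mathrm{div}\bigl(A^\varepsilon(x)\,\nabla w^\varepsilon\bigr)=\varepsilon^2\,\mathrm{div}\,G^\varepsilon,
\]
with $A^\varepsilon$ uniformly elliptic and $G^\varepsilon$ bounded in $C^\alpha$ uniformly in $\varepsilon$. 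Schauder theory then yields $\|w^\varepsilon\|_{C^{1,\alpha}}=O(\varepsilon^2)$, hence $u^\varepsilon-\bar u=O(\varepsilon^2)$; substituting back into \eqref{2.21} gives $\rho^\varepsilon-1=O(\varepsilon^2)$, and $\nabla p^\varepsilon-\nabla\bar p=O(\varepsilon^2)$ follows by subtracting $(\ref{CHEB})_2$ from $(\ref{CISEV})_2$ once the first two differences are controlled. The main obstacle I anticipate is producing the $\varepsilon$-independent higher-order Hölder estimates for the cut-off quasilinear problem: the naive a priori bounds carry powers of $\varepsilon^{-1}$ coming from the factor $\varepsilon^{-2}$ in the pressure law and from the blow-up of $q^\varepsilon_{cr}(\phi)$, and the whole purpose of the new elliptic cut-off is to arrange for these singular factors to cancel and for the resulting bounds to survive the passage $L\to\infty$.
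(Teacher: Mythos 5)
Your proposal is correct in outline and shares the paper's overall architecture (potential reduction, $\varepsilon$-uniform elliptic cut-off, truncation to $\Omega_L$, variational existence, $L$- and $\varepsilon$-independent local average and H\"older estimates, diagonal passage $L\to\infty$, a posteriori removal of the cut-off via $|\nabla\varphi^{(\varepsilon)}|\le\max|\nabla\bar{\varphi}|+C\varepsilon^2$, and an energy-iteration uniqueness argument). The one genuinely different choice is how the $O(\varepsilon^2)$ rate is produced. You construct $\varphi^\varepsilon$ first and then derive the rate a posteriori by linearizing around $\bar{\varphi}$ and applying Schauder theory to $w^\varepsilon=\varphi^\varepsilon-\bar{\varphi}$. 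The paper instead bakes the rate into the construction: the functional $I^{(\varepsilon)}$ in \eqref{li01} is a second-order Taylor remainder of the compressible energy about $\nabla\bar{\varphi}_L$, rescaled by $\varepsilon^{-4}$, and is minimized over the scaled difference $\tilde{\varphi}=(\varphi-\bar{\varphi}_L)/\varepsilon^2$ of \eqref{3.31}; Theorem \ref{th-vc} and Lemmas \ref{th-laeca}, \ref{lem-3localcompressible} then give $\varepsilon$-uniform $H^1$ and $C^{1,\alpha}$ bounds on $\tilde{\varphi}^{(\varepsilon)}_L$ directly, so $u^\varepsilon=\bar{u}+\varepsilon^2\nabla\tilde{\varphi}^{(\varepsilon)}$ with no further comparison needed. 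The paper's route is more economical (the same estimates serve existence, the cut-off removal in \eqref{4.13}, and the rate), while yours is more modular and closer to the standard linearization paradigm.

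One caveat on your rate argument: on the infinite nozzle you cannot invoke Schauder theory globally to conclude $\|w^\varepsilon\|_{C^{1,\alpha}(\Omega)}=O(\varepsilon^2)$, because solutions do not decay at infinity and interior estimates alone do not control the far field. You must first establish a global energy bound on $\nabla w^\varepsilon$ by the same flux-cancellation test-function and iteration device used in the paper's local average lemma for the compressible case and in the uniqueness proofs of Section 5 (the cross-sectional fluxes of $\rho^\varepsilon\nabla\varphi^\varepsilon$ and $\nabla\bar{\varphi}$ are both $m$, which kills the dangerous term), and only then upgrade to H\"older norms locally. This is exactly the content of the paper's average lemma applied to $\tilde{\varphi}^{(\varepsilon)}_L$, so the gap is fillable, but "Schauder theory then yields" understates the work. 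A second minor point: with $\phi$ only in $W^{1,q}_{loc}$, $q>n$, as in \eqref{psicondition1}, the potential is $C^{1,\alpha}$ rather than $C^{2,\alpha}$, consistent with the $(C^{\alpha})^{n+2}$ regularity stated in the theorem.
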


\begin{remark}
It is noticeable that the condition \eqref{psicondition1} on $\nabla\phi$ is local and without requirement on the decay behaviour at the infinity. 
In fact, for the infinitely long  nozzle problem, the far field behaviour of flow can be treated by a quasi-one-dimensional problem, where the key point is the local average estimate.
\end{remark}

\begin{remark}
	It is easy to check the gravity :
	$$
	\phi=g x_i
	$$
	for $i=1, \cdots, n-1$, satisfying the conditions \eqref{psicondition1} on $\phi$. It can also be applied to the electric field.
\end{remark}

\begin{remark}
	In Theorem \ref{MainT2},  the regularity of  $(\rho, u, p)$ is restricted by the regularity of $\phi$. One can lift the regularity of $u$ and $\rho$ by imposing higher regularity conditions on $\phi$.
\end{remark}

\section{Approximate Problems and Variational Approach}

Unlike the airfoil problem in \cite{Dong2}, the asymptotic behaviours of the flow at the inlet and the outlet are different, 
in order to employ the variational approach, we also need to construct
a series of truncated problems in bounded domains to approximate the \textbf{Problem I1($m$)} and \textbf{Problem C1 ($m$)} which are introduced later.
For $L>0$, let
\begin{equation}
\Omega_L=\big\{x\in \Omega\ \big|\ |x_n| < L\big\},\ \ \ S^\pm_L =\Omega\cap\{x_n=\pm L\}.
\end{equation}

Let $H_L$ is a Hilbert space under $H^1$-norm such that
\begin{equation}
H_L = \left\{\varphi\in H^1(\Omega_L) :\ \varphi\big|_{S_L^-} = 0\right\}.
\end{equation}

In this section, we will introduce the approximate problems of \textbf{Problem $(m)$} to the incompressible case and the compressible case, and then obtain the existence of the solutions of the approximate problems by the variational approach.

\subsection{Incompressible potential flow} Let us consider the incompressible case first.
By $\eqref{CHEB}_3$, we can introduce the velocity potential $\bar{\varphi}$ for the incompressible case that 
\begin{equation}
\nabla \bar{\varphi} = \bar{u}.
\end{equation}

Then, \textbf{Problem ($m$)} for the incompressible case becomes:

\textbf{Problem I1 ($m$)}. Let $n\geq 2$. Find function $\bar{\varphi}$ such that
\begin{equation}\label{Ie1}
\begin{cases}
\Delta \bar{\varphi}=0, \quad &x\in\Omega,\\
\frac{\partial\bar{\varphi}}{\partial \textbf{n}}=0, \quad &x\in\partial\Omega,\\
\int_{S_0}\frac{\partial \bar{\varphi}}{\partial \textbf{l}} ds = m,
\end{cases}
\end{equation}
where $S_0$ is any arbitrary cross section of the nozzle, and $\textbf{n}$ and $\textbf{l}$ are the unit outer normals of the nozzle wall $\partial\Omega$ and $S_0$ respectively.

As said before, we will truncate the domain to introduce the approximated problems of \textbf{Problem I1 $(m)$} in $\Omega_L$ for $L>0$. More precisely,
Let us consider the following truncated problem for the incompressible flow that

\medskip

\textbf{Problem I2 ($m$, $L$)}:  Find a function $\bar{\varphi}_L$ such that,
\begin{equation}\label{eqbf}
\left\{\begin{array}{ll}
\Delta \bar{\varphi}_L=0,\ \ \ \ & x\in\Omega_L,\\
\frac{\partial \bar{\varphi}_L}{\partial  \textbf{n}} = 0,\ \ \ \ &\partial\Omega\cap\partial\Omega_L,\\
\frac{\partial\bar{\varphi}_L}{\partial x_n} =\frac{m}{|S_L^+|},& \text{on}\ \ S_L^+\\
\bar{\varphi}_L= 0,&\text{on} \ \ S_L^-.
\end{array}\right.
\end{equation}

Here $|S_L^+|$ denotes the area of the cross section $S_L^+$. We remark that boundary condition $\eqref{eqbf}_3$ on $S^+_L$ implies that the mass flux of the flow is $m$.

Now we can introduce the variational approach to solve \textbf{Problem I2 ($m$, $L$)}. 

Let functional $J(\varphi)$ on $H_L$ be defined as
\begin{equation}
J(\varphi) = \frac{1}{2} \int_{\Omega_L}|\nabla\varphi|^2 dx - \frac{m}{|S_L^+|}\int_{S_L^+}\varphi
dx',
\end{equation}
where  $x'=(x_1, x_2, \ldots, x_{n-1})$. Then in order to show the existence of solutions of
\textbf{Problem I2 ($m$, $L$)}, we will solve the following variational
problem:

\textbf{Problem I3  ($m$, $L$)}:  Find a minimizer $\bar{\varphi}_L\in H_L$ such that
\begin{equation}\label{dej}
J(\bar{\varphi}_L) = \min\limits_{\varphi\in H_L} J(\varphi).
\end{equation}

For the minimizer of \textbf{Problem I3 ($m$, $L$)}, we have the following remark.
\begin{remark}
	The minimizer of \textbf{Problem I3 ($m$, $L$)} is a solution of \textbf{Problem I2 ($m$, $L$)}.
\end{remark}
\begin{proof}
	We only need to show that equation \eqref{eqbf} is the Euler-Lagrangian equation of the variation problem. For any
	$t\in \mathbb{R}^+$ and for any $\varphi\in H_L$ and $\eta\in H_L$, it is easy to know that $\varphi+t\eta\in H_L$, so
	\begin{equation}\label{n14}
	J(\varphi + t\eta) - J(\varphi)
	= \frac{1}{2}\int_{\Omega_L}\left( |\nabla\varphi + t\nabla\eta|^2-|\nabla\varphi|^2\right)dx - \frac{mt}{|S^+_L|}\int_{S^+_L}\eta dx'.
	\end{equation} 
Hence
	$$\liminf_{t\rightarrow 0^+}\frac{1}{t}\left(J(\varphi + t\eta) - J(\varphi)\right)\\
	=\int_\ol \nabla\varphi\cdot\nabla\eta dx
	-\f{m}{|\slb|}\int_\slb\eta dx'.
	$$
If $\varphi$ is the minimizer, then for any $\eta\in H_L$, we have that
$$
J(\varphi+t\eta)-J(\varphi)\geq0\qquad\mbox{and}\qquad J(\varphi-t\eta)-J(\varphi)\geq0.
$$
Therefore, for any $\eta\in H_L$,
	\begin{equation}\label{weakform}
	\int_\ol \g\varphi\cdot\g\eta dx -\f{m}{|\slb|}\int_\slb\eta dx' = 0.
	\end{equation}
It means that $\varphi$ is the solution of \textbf{Problem I2 ($m$, $L$)}. 
\end{proof}

Therefore, in order to show the existence of solutions of \textbf{Problem I2 ($m$, $L$)}, we only need to show the existence of a minimizer of \textbf{Problem I3 ($m$, $L$)}.

For \textbf{Problem I3 ($m$, $L$)}, we have the following theorem:

\begin{theorem}\label{th-ae} 
	\textbf{Problem I3  ($m$, $L$)} admits a unique minimizer $\bar\varphi_L\in H_L$. Moreover, it holds that
	\begin{equation}\label{L1}
	\frac{1}{|\ol|}\int_{\ol}|\g\bar\varphi_L|^2dx \leq Cm^2,
	\end{equation}
	where constant $C$ does not depend on $L$.
\end{theorem}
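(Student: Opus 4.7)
The plan is to apply the direct method of the calculus of variations for existence, use strict convexity for uniqueness, and carry out a carefully calibrated test-function comparison for the $L$-independent energy bound.

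\textbf{Existence of a minimizer.} Because $\varphi|_{S_L^-}=0$, pulling back by the straightening map $T$ and integrating along the $x_n$-direction yields a Poincar\'e inequality $\|\varphi\|_{L^2(\Omega_L)}\le C(L,K)\|\nabla\varphi\|_{L^2(\Omega_L)}$ on $H_L$, and the trace theorem controls $\|\varphi\|_{L^2(S_L^+)}$ by $\|\varphi\|_{H^1(\Omega_L)}$. Combining these with Young's inequality absorbs the linear term $\frac{m}{|S_L^+|}\int_{S_L^+}\varphi\,dx'$ into a small multiple of $\|\nabla\varphi\|_{L^2(\Omega_L)}^2$, showing that $J$ is coercive on $H_L$. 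A minimizing sequence $\{\varphi_k\}\subset H_L$ is then bounded and admits a weakly convergent subsequence $\varphi_k\rightharpoonup\bar\varphi_L$. The quadratic term is weakly lower semicontinuous (being convex and continuous), and the linear trace term is weakly continuous by the compactness of the trace $H^1(\Omega_L)\hookrightarrow L^2(S_L^+)$, so $J(\bar\varphi_L)\le\liminf_k J(\varphi_k)=\inf_{H_L}J$.

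\textbf{Uniqueness.} Since $\varphi|_{S_L^-}=0$ forces constants to vanish, $\|\nabla\cdot\|_{L^2(\Omega_L)}$ is a norm on $H_L$, so the quadratic part of $J$ is strictly convex. The linear term preserves strict convexity of $J$, so the minimizer is unique.

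\textbf{Uniform energy bound.} Take the linear test function $\psi(x):=\frac{m(x_n+L)}{|S_L^+|}\in H_L$, for which a direct computation gives
\[
J(\psi)=\frac{m^2|\Omega_L|}{2|S_L^+|^2}-\frac{2m^2L}{|S_L^+|}.
\]
For any $\varphi\in H_L$, using the vanishing at $S_L^-$ and the fundamental theorem of calculus along $x_n$-fibers (after applying $T$) yields the trace--Poincar\'e estimate
\[
\int_{S_L^+}|\varphi|^2\,dx'\le C_0 L\,\|\nabla\varphi\|_{L^2(\Omega_L)}^2,
\]
with $C_0$ depending only on $K$. Cauchy--Schwarz followed by Young's inequality then gives $J(\varphi)\ge\tfrac14\|\nabla\varphi\|_{L^2(\Omega_L)}^2-\frac{C_0m^2L}{|S_L^+|}$. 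Combining this lower bound at $\varphi=\bar\varphi_L$ with the upper bound $J(\bar\varphi_L)\le J(\psi)$ produces
\[
\tfrac14\|\nabla\bar\varphi_L\|_{L^2(\Omega_L)}^2\le\frac{m^2|\Omega_L|}{2|S_L^+|^2}+\frac{(C_0-2)m^2L}{|S_L^+|}.
\]
By \eqref{H1} the quantities $|\Omega_L|/L$ and $|S_L^+|$ are bounded above and below by constants depending only on $K$, so dividing by $|\Omega_L|$ yields \eqref{L1} with $C=C(K)$.

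\textbf{Main obstacle.} The subtle point is the $L$-independence of the constant in \eqref{L1}: naive Poincar\'e-based coercivity and the value $J(\psi)$ both contain $O(L)$ contributions, which must be arranged to cancel up to an $O(|\Omega_L|)=O(L)$ remainder. The calibration is that the gradient of the linear function $\psi$ is exactly the Neumann flux $m/|S_L^+|$, so $\|\nabla\psi\|_{L^2}^2=m^2|\Omega_L|/|S_L^+|^2$ matches precisely the order of the $O(L)$ trace term. The geometric assumption \eqref{H1} is what permits the uniform comparison $|\Omega_L|\asymp L\,|S_L^+|\asymp L$, and hence the final division by $|\Omega_L|$ yields a bound independent of $L$.
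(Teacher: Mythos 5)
Your proof is correct, and the existence and uniqueness parts follow essentially the same direct-method and strict-convexity arguments as the paper. For the uniform bound \eqref{L1}, however, you take a genuinely different route: you compare $J(\bar\varphi_L)$ with $J(\psi)$ for the calibrated linear competitor $\psi=\tfrac{m(x_n+L)}{|S_L^+|}$ and pair this with the quadratic trace--Poincar\'e bound $\int_{S_L^+}|\varphi|^2\,dx'\le C_0L\|\nabla\varphi\|_{L^2}^2$, then argue that the two $O(L)$ contributions combine into something of order $|\Omega_L|$. The paper instead simply uses $J(\bar\varphi_L)\le J(0)=0$ together with the linear trace estimate $\bigl|\int_{S_L^+}\varphi\,dx'\bigr|\le C|\Omega_L|^{1/2}\|\nabla\varphi\|_{L^2}$ (inequality \eqref{n11}), which immediately gives $\|\nabla\bar\varphi_L\|_{L^2}\le Cm|\Omega_L|^{1/2}/|S_L^+|$ and hence \eqref{L1} after dividing by $|\Omega_L|$ and invoking a uniform lower bound on $|S_L^+|$. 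In fact the ``calibration'' you describe as the main obstacle is not needed: your own coercivity inequality $J(\varphi)\ge\tfrac14\|\nabla\varphi\|_{L^2}^2-C_0m^2L/|S_L^+|$ combined with the trivial upper bound $J(\bar\varphi_L)\le J(0)=0$ already yields $\|\nabla\bar\varphi_L\|_{L^2}^2\le 4C_0m^2L/|S_L^+|=O(m^2|\Omega_L|)$, with no cancellation between $O(L)$ terms required. Your argument with $\psi$ is valid (and the computation of $J(\psi)$ is right), but it buys nothing over the zero competitor here; its one virtue is that it makes explicit that $\inf J$ is genuinely of order $-m^2L/|S_L^+|$, which is consistent with the expected nearly-unidirectional minimizer.
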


\begin{proof} We divide the proof into four steps.
	
	\emph{Step 1.} $J(\varphi)$ is coercive in $H_L$.  For any $\varphi\in H_L$, we know that $\varphi|_{S^-_L}=0$ by the definition. So by the H\"{o}lder inequality,
	\begin{equation}\label{n11}
	\left|\int_{S_L^+}\varphi dx'\right| =	\left|\int_{S_L^+}\int_{-L}^{L}\frac{\partial\varphi}{\partial x_n} dx_n dx'\right| \leq C|\ol|^{\f 12}\lb{\g\varphi}
	\end{equation}
	By  the Cauchy inequality,
	\begin{eqnarray}
	J(\varphi) &=&\frac{1}{2} \int_{\ol}|\g\varphi|^2dx - \f{m}{|S_L^+|}\int_{S_L^+}\varphi dx'
	\nonumber\\
	&\geq& \frac{1}{4}\lb{\g\varphi}^2 - 2 C(m, |S^+_L|,|\ol|),\nonumber
	\end{eqnarray}
	which implies $J(\varphi)$ is coercive.

	\emph{Step 2.} The existence of the minimizer $\bar\varphi_L\in H_L$. Let
	$\{\bar{\varphi}_{L, n}\}\subset H_L$ be a minimizer sequence such that, as $n\rightarrow\infty$
	\[
	J(\bar{\varphi}_{L, n})\rightarrow \alpha = \inf\limits_{\varphi\in H_L}J(\varphi)>-\infty.
	\]
	Since
	$J(\varphi)$ is coercive in $H_L$,
	\[
	\int_\ol |\g\bar{\varphi}_{L, n}|^2dx \leq 4J(\bar{\varphi}_{L, n}) + 8 C(m, |S^+_L|,|\ol|)\leq 4J(0) + 8C(m, |S^+_L|,|\ol|)
	= 8C(m, |S^+_L|,|\ol|).
	\]
	Therefore, there exists a subsequence, still denoted by $\{\bar{\varphi}_{L, n}\}$, which converges weakly to a function $\bar{\varphi}_L \in H_L$. And, by the lower semi-continuity, it holds that
	\begin{equation}\label{n12}
	\lb{\nabla\bar\varphi_L}^2 \leq \liminf_{n\ra\infty}\int_\ol
	|\nabla\bar{\varphi}_{L, n}|^2dx \leq 4C(m, |S^+_L|,|\ol|).
	\end{equation}
	%
	On the other hand, similar to the proof of \eqref{n11}, we have 
	\begin{equation}
	\int_\slb(\bar{\varphi}_{L, n}-\bar\varphi_L)^2 dx' \leq C(\Omega_L)\bigg(\int_\ol|\bar{\varphi}_{L, n}-\bar{\varphi}_L|^2dx\bigg)^{\f
		12}\bigg(\int_\ol|\nabla\bar{\varphi}_{L, n}-\nabla\bar{\varphi}_L|^2dx\bigg)^{\f 12} .
	\end{equation}
	Then, by the $L^2(\Omega_L)$ strong convergence of the sequence of $\{\bar{\varphi}_{L, n}\}$, we have that
	\begin{equation}\label{n13}
	\int_\slb|\bar{\varphi}_{L, n}-\bar{\varphi}_L|dx' \ra 0, \ \ \ \ \text{ as } n\ra \infty.
	\end{equation}
	Therefore, it follows from (\ref{n12}) and (\ref{n13}) that
	$
	J(\bar\varphi_L)\leq\liminf_{n\ra\infty} J(\bar{\varphi}_{L, n}) = \alpha,
	$
	which means
	\begin{equation}
	J(\bar{\varphi}_L) = \min\limits_{\varphi\in H_L} J(\varphi) = \alpha.
	\end{equation}

	\emph{Step 3}.
	The uniqueness of the minimizer. If there are two minimizers $\varphi_1\in H_L$ and $\varphi_2\in H_L$, then we know that
	$$
	J(\varphi_1)=J(\varphi_2)\leq J(\frac{\varphi_1+\varphi_2}{2}).
	$$ 
	So
	\begin{align*}
	0\geq& J(\varphi_1)+J(\varphi_2)-2J(\frac{\varphi_1+\varphi_2}{2})\\
	  =&\frac{1}{2}\int_{\Omega_L}(|\nabla\varphi_1|^2+|\nabla\varphi_2|^2-2|\nabla(\frac{\varphi_1+\varphi_2}{2})|^2)dx\\
	  \geq&\frac{1}{4}\int_{\Omega_L}|\nabla(\varphi_1-\varphi_2)|^2dx.
	\end{align*}
	By the fact that $\varphi_1=\varphi_2=0$ on $S_L^-$, we know that $\varphi_1=\varphi_2$. Therefore, the minimizer is unique.
	
	\emph{Step 4.} We will show \eqref{L1} in this step. By direct computation and \eqref{n11},
	\begin{equation}
	\int_\ol |\nabla\bar{\varphi}_L|^2dx = J(\bar{\varphi}_L) + \f{m}{|\slb|}\int_\slb\bar{\varphi}_L dx'\leq J(0) + \f{m}{|\slb|}\int_\slb\bar{\varphi}_L dx'
	\leq C\f{m}{|\slb|}|\ol|^{\f 12}\lb{\nabla\bar\varphi_L}.
	\end{equation}
	
	That is
	
	\begin{equation}
	\f 1{|\ol|}\int_\ol |\nabla\bar{\varphi}_L|^2dx \leq C\f{m^2}{|\slb|^2} \leq
	C\f{m^2}{S_{min}^2},
	\end{equation}
	where $S_{min}$ is defined to be the minimum of $|\slb|$.
\end{proof}

For the regularity of the solution of \textbf{Problem I2 ($m$, $L$)}, by the standard elliptic estimate (cf. see \cite{Gilbarg-Trudinger}), 
we have the following lemma:
\begin{lemma}\label{lem-3}
	Assume (\ref{H1}) holds, then there are constants $0<\alpha<1$ and $C$ depending on  $\Omega_L$ such that for any solution $\bar{\varphi}_L\in H_L$ of \textbf{Problem I2 ($m$, $L$)}, we have
	$$
	\sup_{x\in\Omega_{L/2}}|\nabla\bar\varphi—_L|\leq C \left(	||\nabla\bar\varphi_L||^2_{L^2(\Omega_L)}\right),
	$$
	and
	$$
	\sup_{x_1,x_2\in\Omega_{L/2}} \frac{|\nabla\bar\varphi_L(x_1)-\nabla\bar\varphi_L(x_2)|}{|x_1-x_2|^\alpha}\leq C \left(	||\nabla\bar\varphi_L||^2_{L^2(\Omega_L)}\right).
	$$
\end{lemma}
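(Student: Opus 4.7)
The plan is to reduce the lemma to standard elliptic Schauder/De~Giorgi theory by flattening the curved lateral wall $\partial\Omega\cap\partial\Omega_L$ via the map $T$ from \eqref{H1}, away from the artificial top and bottom faces $S_L^\pm$.

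First, I would fix any point $x_0\in\overline{\Omega_{L/2}}$ and choose a small ball $B_r(x_0)$ whose image $T(B_r(x_0))$ lies in $\mathbf{C}\cap\{|y_n|<3L/4\}$, so that it stays strictly away from $\{y_n=\pm L\}$. Setting $v(y)=\bar{\varphi}_L(T^{-1}(y))$, the equation $\Delta\bar{\varphi}_L=0$ becomes a linear second-order equation in divergence form $\partial_i(a^{ij}(y)\partial_j v)=0$, with symmetric, uniformly elliptic coefficient matrix whose entries lie in $C^{1,\alpha}$ since $T,T^{-1}\in C^{2,\alpha}$ by \eqref{H1}, with bounds controlled by $K$. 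The slip condition $\partial_{\mathbf{n}}\bar{\varphi}_L=0$ on $\partial\Omega$ transforms into a homogeneous conormal (oblique derivative) condition $a^{ij}\nu_i\partial_j v=0$ on the flat cylinder wall $\partial B(0,1)\times\mathbb{R}$. Crucially, inside $\Omega_{L/2}$ no boundary piece from $S_L^\pm$ is encountered, so only interior and lateral-boundary regularity are needed.

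Next, I would apply standard $C^{1,\alpha}$ Schauder theory to $v$:
\begin{itemize}
\item In the interior of $\mathbf{C}$, use interior Schauder estimates (Gilbarg--Trudinger, Thm.~8.32 or its divergence-form analogue) giving $\|v\|_{C^{1,\alpha}(K')}\leq C\|v\|_{L^2(K)}$ for compact $K'\Subset K$.
\item Near the lateral wall, use boundary Schauder estimates for the homogeneous oblique/conormal problem (Gilbarg--Trudinger, \S 6.7 / Thm.~8.33, or Lieberman), yielding the same $C^{1,\alpha}$ bound up to the wall in terms of $\|v\|_{L^2}$.
\end{itemize}
A covering argument over $\overline{\Omega_{L/2}}$ combines these local estimates into a global bound $\|\nabla v\|_{C^{0,\alpha}(T(\Omega_{L/2}))}\leq C\|v\|_{L^2(T(\Omega_L))}$, where $C$ depends only on $\Omega_L$ through $K$ and the geometry. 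Because $v$ is harmonic up to a zero-order-term-free divergence form equation and the problem is invariant under adding constants, $\|v\|_{L^2}$ can be replaced by $\|\nabla v\|_{L^2}$ via a Poincar\'e-type argument (using that $v$ has mean zero on suitable slices, or simply subtracting a constant). Undoing the change of variables and using $\|\nabla v\|_{L^2}\leq C\|\nabla\bar{\varphi}_L\|_{L^2(\Omega_L)}$ produces the claimed pointwise and H\"older estimates.

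The only real obstacle is the boundary piece: we need Schauder regularity up to the curved nozzle wall under a homogeneous conormal condition. This is classical once the boundary is flattened by $T$, but care is needed to ensure (i) the transformed coefficients $a^{ij}$ are genuinely in $C^{1,\alpha}$ and uniformly elliptic with constants depending only on $K$, and (ii) the oblique vector $a^{ij}\nu_i$ is uniformly transversal to the flat cylinder wall. Both follow directly from \eqref{H1}. The $L^\infty$ bound on $\nabla\bar{\varphi}_L$ is an immediate consequence of the $C^{1,\alpha}$ bound. (The apparent square on $\|\nabla\bar{\varphi}_L\|_{L^2}^2$ in the statement is harmless: since the equation is linear, the estimate is in fact linear in this norm, and the proof gives the stronger linear bound.)
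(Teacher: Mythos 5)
Your proposal is correct and matches the paper's intent: the paper simply invokes ``the standard elliptic estimate (cf.\ Gilbarg--Trudinger)'' and omits the proof, and the route you describe---flattening the nozzle wall with $T$ from \eqref{H1}, interior and conormal-boundary Schauder/De Giorgi estimates away from $S_L^{\pm}$, a covering argument over $\overline{\Omega_{L/2}}$, and a Poincar\'e step to pass from $\|v\|_{L^2}$ to $\|\nabla v\|_{L^2}$---is exactly that standard argument. Your parenthetical observation about the exponent $2$ on $\|\nabla\bar\varphi_L\|_{L^2(\Omega_L)}$ is also well taken: the scaling $\bar\varphi_L\mapsto\lambda\bar\varphi_L$ shows the estimate must be linear in that norm, so the square in the statement is a typographical slip rather than something the proof needs to produce.
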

We omit the proof since it is standard.

\subsection{Compressible potential flow}
Now let us consider the compressible case. By $\eqref{CISEV}_3$, we can introduce the velocity potential $\varphi^{(\varepsilon)}$ for the compressible case such that 
\begin{equation}
\nabla \varphi ^{(\varepsilon)}= u^{\varepsilon}.
\end{equation}

Then \textbf{Problem ($m$)} for the compressible case becomes 

\textbf{Problem C1 ($m$)}. Let $n\geq 2$. Find function $\varphi^{(\varepsilon)}$ such that
\begin{equation}\label{ce1}
\begin{cases}
\mbox{div} \left(\rho^\varepsilon(|\nabla\varphi^{(\varepsilon)}|^2, \phi)\nabla\varphi^{(\varepsilon)} \right)=0, \quad &x\in\Omega,\\
\frac{\partial\varphi^{(\varepsilon)}}{\partial \textbf{n}}=0, \quad &x\in\partial\Omega,\\
\int_{S_0}\rho^\varepsilon\left(|\nabla\varphi^{(\varepsilon)}|^2, \phi\right)\frac{\partial \varphi^{(\varepsilon)}}{\partial \textbf{l}} ds = m.
\end{cases}
\end{equation}
By the straightforward computation, $\eqref{ce1}_1$ can be rewritten as
\begin{equation}\label{comell}
\sum_{ij=1}^n a_{ij}^{\varepsilon}\partial_{ij}\varphi^{(\varepsilon)}+\sum_{i=1}^n b^\varepsilon_i\partial_i\varphi^{(\varepsilon)}=0,
\end{equation}  
where
\begin{equation}
a^\varepsilon_{ij}
=\rho^{\varepsilon}\left(\delta_{ij}-\frac{\varepsilon^2\partial_i\varphi^{(\varepsilon)}\partial_j\varphi^{(\varepsilon)}}
{\tilde{p}'(\rho^\varepsilon)}\right),
\end{equation}
and
\begin{equation}
b^\varepsilon_i=\frac{\varepsilon^2\rho^\varepsilon\partial_i\phi}{\tilde{p}'(\rho^\varepsilon)}.
\end{equation}

For $0<\varepsilon<1$ and $M^\varepsilon<\theta<1$, we have that 
\begin{equation}
0<\lambda_1|\xi|^2\leq\sum_{ij=1}^n a^\varepsilon_{ij}\xi_i\xi_j\leq \lambda_2|\xi|^2,
\end{equation}
where constants $\lambda_1$ and $\lambda_2$ do not depend on 
$\varepsilon$.



Since equation \eqref{comell} is nonlinear, and is strictly elliptic if and only if $M^\varepsilon(\phi)<1$. We do not know whether equation \eqref{comell} is elliptic or not before solving it. 
Therefore we need to introduce the subsonic cut-off to truncate the coefficients of equation \eqref{comell}. For $0<\varepsilon_0<1$ and $0<\theta<1$, we introduce $\mathring{q}^{\varepsilon_0}_\theta(\phi)=\inf_{0<\varepsilon<\varepsilon_0} q^\varepsilon_\theta(\phi)$, and cut-off function on the phase plane
\begin{equation*}
\hat{q}(q^2, \phi)=\begin{cases}
q^2-2\phi\quad
&\mbox{if}~|q|\leq \mathring{q}^{\varepsilon_0}_\theta(\phi),\\
\mbox{monotone smooth function} \quad
&\mbox{if} 
~\mathring{q}^{\varepsilon_0}_\theta(\phi)   \leq|q|\leq \mathring{q}^{\varepsilon_0}_{\frac{\theta+1}{2}}(\phi),\\
\sup_{x\in\Omega}\left(\left(\mathring{q}^{\varepsilon_0}_{\frac{\theta+1}{2}}\right)^2(\phi)-2\phi\right)(x)\quad
&\mbox{if}~|q|\geq \mathring{q}^{\varepsilon_0}_{\frac{\theta+1}{2}}(\phi).
\end{cases}\end{equation*}
 
Let $\hat{\rho}^{(\varepsilon)}$ satisfy
\begin{equation}\label{3.1}
\frac{\hat{q}(|u|^2, \phi)}{2}+h^{(\varepsilon)}(\hat{\rho}^{(\varepsilon)})-h^{(\varepsilon)}(1)
=0,
\end{equation}
which is equivalent to
\begin{equation}
\hat{\rho}^\varepsilon=\hat{\rho}^\varepsilon(|u|^2, \phi)
=\tilde{h}^{-1}\left(\tilde{h}(1)-\frac{\varepsilon^2\hat{q}(|u|^2, \phi)}{2}\right).
\end{equation}
We denote $\hat{\rho}^\varepsilon_\Lambda(\Lambda, \phi):=\frac{\partial}{\partial \Lambda}\hat{\rho}^\varepsilon(\Lambda, \phi)$, and $\hat{\rho}^\varepsilon_\phi(\Lambda, \phi):=\frac{\partial}{\partial \phi}\hat{\rho}^\varepsilon(\Lambda, \phi)$.

Then, \textbf{Problem C1 ($m$)} is reformulated into \textbf{Problem C2 ($m$)} as follows.

\textbf{Problem C2 ($m$)}:
Let $n\geq 2$. Find function $\varphi^{(\varepsilon)}$ to satisfy
\begin{equation}\label{equ-modify}
\begin{cases}
\mbox{div} \left(\hat{\rho}^\varepsilon(|\nabla\varphi^{(\varepsilon)}|^2, \phi)\nabla\varphi^{(\varepsilon)} \right)=0, \quad &x\in\Omega,\\
\frac{\partial\varphi^{(\varepsilon)}}{\partial \textbf{n}}=0, \quad &x\in\partial\Omega,\\
\int_{S_0}\hat{\rho}^\varepsilon\left(|\nabla\varphi^{(\varepsilon)}|^2\right)\frac{\partial \varphi^{(\varepsilon)}}{\partial \textbf{l}}ds = m.
\end{cases}
\end{equation}

By the straightforward calculation, we know that  $\eqref{equ-modify}_1$ can be rewritten as
$$
\sum_{i,j=1}^n\hat{a}_{ij}(\nabla \varphi^{(\varepsilon)}, \phi)\partial_{ij}\varphi^{(\varepsilon)}+\sum_{i=1}^n\hat{b}_i(\nabla\varphi^{(\varepsilon)}, \phi)\partial_i\varphi^{(\varepsilon)}=0,
$$ 
where
\begin{eqnarray}\label{li-3.2}
\hat{a}_{ij}\left(\nabla \varphi^{(\varepsilon)}, \phi\right) &=& \hat{\rho}^{\varepsilon}\left(|\nabla\varphi^{(\varepsilon)}|^2, \phi\right)\left(\delta_{ij} -
\frac{\hat{q}_\Lambda(|\nabla\varphi^{(\varepsilon)}|^2, \phi)\partial_i\varphi^{(\varepsilon)}\partial_j\varphi^{(\varepsilon)}}
{(c^\varepsilon)^2}\right)\nonumber\\
&=&\hat{\rho}^{\varepsilon}\left(|\nabla\varphi^{(\varepsilon)}|^2\right)\left(\delta_{ij} - 
\frac{\varepsilon^{2}\hat{q}_\Lambda(|\nabla\varphi^{(\varepsilon)}|^2, \phi)\partial_i\varphi^{(\varepsilon)}\partial_j\varphi^{(\varepsilon)}}
{\tilde{p}'(\hat{\rho}^{\varepsilon})}\right),
\end{eqnarray}
and
\begin{equation}
\hat{b}_i\left(\nabla \varphi^{(\varepsilon)}, \phi\right)=\frac{\varepsilon^2\hat{\rho}^\varepsilon\hat{q}_\phi(|\nabla\varphi^{(\varepsilon)}|^2, \phi)\partial_i\phi}{\tilde{p}'(\hat{\rho}^\varepsilon)}.
\end{equation}
Obviously, 
\begin{equation}\label{li-3.3}
\hat{\lambda}_1 |\xi|^2 \leq\sum_{i, j =1}^n \hat{a}_{ij} \left(\nabla\varphi^{(\varepsilon)}, \phi\right)\xi_i\xi_j\leq \hat{\lambda}_2 |\xi|^2, \qquad |\hat{b}_i(\nabla\varphi^{(\varepsilon)}, \phi)|\leq C|\partial_{i}\phi|,
\end{equation}
where constants $C$, $\hat{\lambda}_1$, and $\hat{\lambda}_2$ depend only on the subsonic truncation parameters $\theta$ and $\varepsilon_0$, and do not depend on solution $\varphi^{(\varepsilon)}$.




Next, as in the previous subsection for the incompressible case, we approximate \textbf{Problem \^{C1} ($m$)} for any $L$ sufficiently large, by considering the following approximate problem:

\textbf{Problem C3 ($m$, $L$)}: For any sufficiently large $L>0$, find a function $\varphi^{(\varepsilon)}_L$ such that,
\begin{equation}\label{eqcc}
\left\{\begin{array}{ll}
\mbox{div} \left(\hat{\rho}^{\varepsilon}(|\nabla\varphi_L^{(\varepsilon)}|^2, \phi)\nabla\varphi_L^{(\varepsilon)} \right)=0,\ \ \ \ & x\in\Omega_L,\\
\frac{\partial \varphi_L^{(\varepsilon)}}{\partial  \textbf{n}} = 0,\ \ \ \ &\partial\Omega\cap\partial\Omega_L,\\
\hat{\rho}^{\varepsilon}(|\nabla\varphi_L^{(\varepsilon)}|^2, \phi)\frac{\partial\varphi_L^{(\varepsilon)}}{\partial x_n} =\frac{m}{|S_L^+|},& \text{on}\ \ S_L^+\\
\varphi_L^{(\varepsilon)} = 0,&\text{on} \ \ S_L^-.
\end{array}\right.
\end{equation}

Similarly to the incompressible case, we will solve \textbf{Problem C3 ($m$, $L$)} by
a variational approach. 
%
%
%
Here, we follow the idea used in \cite{Dong2} to introduce a variational formulation. Denote
$$
G^{(\varepsilon)}(\Lambda, \phi) = \frac{1}{2}\int_0^\Lambda \hat{\rho}^{(\varepsilon)}(\lambda, \phi)d\lambda.
$$

In order to compare the solutions of \textbf{Problem I2 ($m$, $L$)} and \textbf{Problem C3 ($m$, $L$)}, 
we introduce 
\begin{equation}\label{li01}
I^{(\varepsilon)}\left(\varphi, \bar{\varphi}_L \right) 
=\varepsilon^{-4}\int_\Omega 
\left[G^{(\varepsilon)}\left(|\nabla\varphi|^2, \phi\right)
-G^{(\varepsilon)}\left(|\nabla\bar{\varphi}_L|^2, \phi\right)
-\nabla\bar{\varphi}_L\cdot\left(\nabla\varphi-\nabla\bar{\varphi}_L\right)
\right]dx.
\end{equation}

Define 
\begin{equation}\tilde{\varphi}=\frac{\varphi-\bar{\varphi}_L}{\varepsilon^2}\qquad\mbox{and}\qquad \tilde{\varphi}_L^{(\varepsilon)}=\frac{\varphi_L^{(\varepsilon)}-\bar{\varphi}_L}{\varepsilon^2}.\label{3.31}
\end{equation} 

Obviously, both $\varphi^{(\varepsilon)}_L$ and $\tilde{\varphi}^{(\varepsilon)}_L$ belong to $H_L$.
Then in order to apply the variational approach to find solutions of \textbf{Problem C3 ($m$, $L$)}, let us consider the following problem that 

\textbf{Problem C4 ($m$, $L$)}: Find a minimizer $\tilde{\varphi}_L^{(\varepsilon)}\in H_L$  such that
\begin{equation}
I^{(\varepsilon)}\left(\bar{\varphi}_L+\varepsilon^2\tilde{\varphi}_L^{(\varepsilon)}, \bar{\varphi}_L\right)
=\min_{\tilde{\varphi}\in H_L} I^{(\varepsilon)}(\bar{\varphi}_L+\varepsilon^{2}\tilde{\varphi}, \bar{\varphi}_L).
\end{equation}


For \textbf{Problem C4 ($m$, $L$)}, we have the following theorem:

\begin{theorem}\label{th-vc} 
	\textbf{Problem C4 ($m$, $L$)} admits a unique minimizer $\tilde{\varphi}_L^{(\varepsilon)}\in H_L$. Moreover, the minimizer $\tilde{\varphi}_L^{(\varepsilon)}$ satisfies that
	\begin{equation}\label{L2}
	\frac{1}{|\ol|}\int_{\ol}|\g\tilde{\varphi}_L^{(\varepsilon)}|^2dx \leq Cm^2,
	\end{equation}
	where constant $C$ does not depend on $L$.
\end{theorem}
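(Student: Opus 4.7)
The plan is to apply the direct method of the calculus of variations to the functional $\tilde{\varphi}\mapsto I^{(\varepsilon)}(\bar{\varphi}_L+\varepsilon^{2}\tilde{\varphi},\bar{\varphi}_L)$ on the Hilbert space $H_L$, following the same four-step scheme used in Theorem \ref{th-ae} (coercivity, existence, uniqueness, $L^{2}$-bound). The role played by harmonicity there is now replaced by the strict convexity of the map $q\mapsto G^{(\varepsilon)}(|q|^{2},\phi)$ on $\mathbb{R}^{n}$. A direct computation identifies its Hessian with the matrix $\hat{a}_{ij}(q,\phi)$ in \eqref{li-3.2}, so that the subsonic cut-off yields the uniform-in-$(\varepsilon,q)$ bounds $\hat{\lambda}_{1}I\leq D^{2}_{q}G^{(\varepsilon)}(|q|^{2},\phi)\leq \hat{\lambda}_{2}I$ from \eqref{li-3.3}.

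First I would rewrite the integrand of $I^{(\varepsilon)}$ using the convexity inequality
\begin{equation*}
G^{(\varepsilon)}(|p|^{2},\phi)-G^{(\varepsilon)}(|q|^{2},\phi)-\hat{\rho}^{(\varepsilon)}(|q|^{2},\phi)\,q\cdot(p-q)\geq \frac{\hat{\lambda}_{1}}{2}|p-q|^{2},
\end{equation*}
applied with $p=\nabla\varphi$ and $q=\nabla\bar{\varphi}_L$. Combined with the pointwise bound $|\hat{\rho}^{(\varepsilon)}(|q|^{2},\phi)-1|\leq C\varepsilon^{2}$, which follows from the Bernoulli relation \eqref{3.1}, the smoothness of $\tilde{h}^{-1}$, and the fact that the subsonic cut-off keeps $\hat{q}(|q|^{2},\phi)$ uniformly bounded, one obtains
\begin{equation*}
G^{(\varepsilon)}(|\nabla\varphi|^{2},\phi)-G^{(\varepsilon)}(|\nabla\bar{\varphi}_L|^{2},\phi)-\nabla\bar{\varphi}_L\cdot(\nabla\varphi-\nabla\bar{\varphi}_L)\geq \frac{\hat{\lambda}_{1}}{2}|\nabla\varphi-\nabla\bar{\varphi}_L|^{2}-C\varepsilon^{2}|\nabla\bar{\varphi}_L|\,|\nabla\varphi-\nabla\bar{\varphi}_L|.
\end{equation*}
Substituting $\varphi-\bar{\varphi}_L=\varepsilon^{2}\tilde{\varphi}$, multiplying by $\varepsilon^{-4}$, integrating over $\Omega_L$, and applying Young's inequality yields the $\varepsilon$-independent coercivity estimate
\begin{equation*}
I^{(\varepsilon)}(\bar{\varphi}_L+\varepsilon^{2}\tilde{\varphi},\bar{\varphi}_L)\geq \frac{\hat{\lambda}_{1}}{4}\|\nabla\tilde{\varphi}\|_{L^{2}(\Omega_L)}^{2}-C\|\nabla\bar{\varphi}_L\|_{L^{2}(\Omega_L)}^{2}.
\end{equation*}

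With this in hand, the remaining steps are essentially routine. For existence, a minimizing sequence $\{\tilde{\varphi}_{L,n}\}\subset H_L$ is bounded in $H^{1}$ by the coercivity above, converges weakly along a subsequence to some $\tilde{\varphi}_L^{(\varepsilon)}\in H_L$, and is a minimizer because the convexity of $G^{(\varepsilon)}(|\cdot|^{2},\phi)$ and the weak continuity of the linear correction term give weak lower semi-continuity of $I^{(\varepsilon)}$. Uniqueness follows from strict convexity as in Step 3 of Theorem \ref{th-ae}: two distinct minimizers would make the midpoint strictly decrease the functional, a contradiction. Finally, using $\tilde{\varphi}=0$ as a competitor gives $I^{(\varepsilon)}(\bar{\varphi}_L+\varepsilon^{2}\tilde{\varphi}_L^{(\varepsilon)},\bar{\varphi}_L)\leq I^{(\varepsilon)}(\bar{\varphi}_L,\bar{\varphi}_L)=0$, which combined with the coercivity bound and with \eqref{L1} from Theorem \ref{th-ae} gives $\|\nabla\tilde{\varphi}_L^{(\varepsilon)}\|_{L^{2}(\Omega_L)}^{2}\leq Cm^{2}|\Omega_L|$; dividing by $|\Omega_L|$ produces \eqref{L2} with $C$ independent of $L$ (and of $\varepsilon$).

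The main technical obstacle is the clean verification of $|\hat{\rho}^{(\varepsilon)}-1|\leq C\varepsilon^{2}$ uniformly in its argument, since this is precisely what allows the scaling factor $\varepsilon^{-4}$ in the definition of $I^{(\varepsilon)}$ to be absorbed and leaves behind an $\varepsilon$-independent coercive functional; without the bounded range of $\hat{q}$ enforced by the subsonic cut-off, neither the coercivity constant nor the $L^{2}$-estimate would be uniform in $\varepsilon$, and the whole low Mach number analysis in Section 6 would collapse.
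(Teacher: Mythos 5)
Your proposal is correct and follows essentially the same route as the paper's proof: the splitting of the integrand into the Taylor remainder governed by the Hessian $\partial^2_{pp}F^{(\varepsilon)}=\hat{a}_{ij}$ (uniformly elliptic by the cut-off) plus the linear term with coefficient $\hat{\rho}^{(\varepsilon)}-1=O(\varepsilon^2)$ is exactly the paper's decomposition $I^{(\varepsilon)}=I_1^{(\varepsilon)}+I_2^{(\varepsilon)}$, and the coercivity, convexity/uniqueness, and comparison with $\tilde{\varphi}=0$ to get \eqref{L2} from \eqref{L1} all match. The only cosmetic difference is that you invoke weak lower semicontinuity directly from convexity for existence, whereas the paper proves strong continuity of the functional and then appeals to a standard compactness argument; the substance is the same.
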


\begin{proof}
The proof is divided into four steps.

	\textbf{Step 1.} $I^{(\varepsilon)}(	\bar{\varphi}_L+\varepsilon^{2}\tilde{\varphi}, \bar{\varphi}_L)=I^{(\varepsilon)}(\varphi, \bar{\varphi}_L)$ is coercive with respect to $\tilde{\varphi}$ on $H_L$, \emph{i.e.}, we will show that
\begin{equation}\label{I-phi}
	I^{(\varepsilon)}(\varphi, \bar{\varphi}_L) \geq \frac{C_1}{2}\int_{\Omega_L} |\nabla\tilde{\varphi}|^2dx - C_2\int_{\Omega_L} |\nabla\bar{\varphi}_L|^2dx.
\end{equation}
	
Let 
\begin{equation}
I^{(\varepsilon)}(\varphi, \bar{\varphi}_L)=I^{(\varepsilon)}_1(\varphi, \bar{\varphi}_L)+I^{(\varepsilon)}_2(\varphi, \bar{\varphi}_L),
\end{equation}
where
	\begin{equation*}
I^{(\varepsilon)}_1(\varphi, \bar{\varphi}_L)
		=\varepsilon^{-4}\int_{\Omega_L}
	\left[G^{(\varepsilon)}\left(|\nabla\varphi|^2, \phi\right)
	-G^{(\varepsilon)}\left(|\nabla\bar{\varphi}_L|^2, \phi\right)
	-2G^{(\varepsilon)}_\Lambda \left(|\nabla\bar{\varphi}_L |^2, \phi\right)\nabla\bar{\varphi}_L\cdot\left(\nabla\varphi-\nabla\bar{\varphi}_L\right)
	\right]dx,
	\end{equation*}
	and
	\begin{equation*}
	I^{(\varepsilon)}_2(\varphi, \bar{\varphi}_L)
	=:	\varepsilon^{-4}\int_{\Omega_L }
	\left[
	\left(2G^{(\varepsilon)}_\Lambda\left(|\nabla\bar{\varphi}_L |^2, \phi\right)-1\right)\nabla\bar{\varphi}_L\cdot\left(\nabla\varphi-\nabla\bar{\varphi}_L\right)
	\right]dx.
		\end{equation*}

First, we will show that $I_1^{(\varepsilon)}(\varphi,\bar{\varphi}_L)$ is coercive with respect to $\tilde{\varphi}$ in $H_L$. 

Let $p=(p_1, \cdots, p_n)$, and let $F^{(\varepsilon)}(p)=G^{(\varepsilon)}\left(|p|^2, \phi \right)$. By straightforward computation, we can get that
	\begin{eqnarray}
	&&G^{(\varepsilon)}\left(|\nabla\varphi|^2, \phi\right)-G^{(\varepsilon)}\left(|\nabla\bar{\varphi}|^2, \phi\right)
	-2 G^{(\varepsilon)}_\Lambda\left(|\nabla\bar{\varphi}|^2, \phi\right) \nabla\bar{\varphi}\cdot\left(\nabla\varphi-\nabla\bar{\varphi}\right)\nonumber\\
	&=& F^{(\varepsilon)}(\nabla\varphi) - F^{(\varepsilon)}\left( \nabla\bar{\varphi}\right)
	-\nabla F^{(\varepsilon)} \left( \nabla\bar{\varphi}\right) \cdot \left(\nabla\varphi- \nabla\bar{\varphi}\right)\nonumber\\
	&=&\sum_{i, j=1}^n\int_0^1(1-t)\partial_{p_ip_j} F^{(\varepsilon)}\left(t\nabla\varphi+(1-t) \nabla\bar{\varphi}\right)dt
	\partial_i(\varphi-\bar{\varphi})\partial_j(\varphi-\bar{\varphi}).\nonumber
	\end{eqnarray}
	
	It is easy to check that $\partial_{pp}^2F^{(\varepsilon)}$ is uniformly positive due to the subsonic cut-off.  In fact, we have
	\begin{equation}
	\left(\partial_{pp}^2F^{(\varepsilon)}(p)\right)_{i,j}
	=\hat{a}_{ij}.
	\end{equation}
	
	From property \eqref{li-3.3}, we get the uniformly positivity of $\partial_{pp}^2F$. As a consequence, we have
	
	\begin{equation}\label{ineq-phi}
	C_1\int_{\Omega_L }|\nabla\tilde{\varphi}|^2dx\leq I^{(\varepsilon)}_1(\varphi, \bar{\varphi}_L) \leq
	\tilde{C}_1\int_{\Omega_L }|\nabla\tilde{\varphi}|^2dx.
	\end{equation}

Now, let us consider $I^{(\varepsilon)}_2(\varphi, \bar{\varphi}_L)$. Note that 
	\begin{eqnarray}
	\left|\varepsilon^{-2}\left( 2G^{(\varepsilon)}_\Lambda \left(|\nabla\bar{\varphi}_L |^2, \phi\right)-1\right)\right|	&=&\left|\varepsilon^{-2}\left( \hat{\rho}^{(\varepsilon)}  \left(|\nabla\bar{\varphi}_L|^2, \phi\right)-1\right)\right|	\nonumber\\
	&=&\left|\varepsilon^{-2}\left( 	\tilde{h}^{-1}\left(\frac{-\varepsilon^2\hat{q}(|\nabla\bar{\varphi}_L|^2, \phi)}{2}+\tilde{h}(1)\right)-1\right)\right|\nonumber\\
	&=&\left|\varepsilon^{-2}\left( 	\tilde{h}^{-1}\left(\frac{-\varepsilon^2\hat{q}(|\nabla\bar{\varphi}_L|^2, \phi)}{2}+\tilde{h}(1)\right)-\tilde{h}^{-1}\left(\tilde{h}(1)\right)\right)\right|\nonumber\\
	&=& \left|\frac{-\hat{q}(|\nabla\bar{\varphi}_L|^2, \phi)}{2}\int_{0}^{1} 	\left( \tilde{h}^{-1} \right)'\left(-t\frac{\varepsilon^2\hat{q}(|\nabla\bar{\varphi}_L|^2, \phi)}{2}+\tilde{h}(1)\right) dt\right|
\nonumber\\
	&\leq& C,
	\end{eqnarray}
where constant $C$ is independent of $\varepsilon$.	
Then	
		\begin{eqnarray}\label{i2}
	|I^{(\varepsilon)}_2(\varphi, \bar{\varphi}_L)|
	&=&\left|	\varepsilon^{-4}\int_{\Omega_L }
	\left[
	\left(2 G^{(\varepsilon)}_\Lambda \left(|\nabla\bar{\varphi}_L |^2, \phi 
	\right)-1\right)\nabla\bar{\varphi}_L\cdot\left(\nabla\varphi-\nabla\bar{\varphi}_L\right)
	\right]dx\right|,\nonumber\\
	&\leq& C
\varepsilon^{-2}\int_{\Omega_L } |\nabla\bar{\varphi}_L| |\nabla(\varphi-\bar{\varphi}_L)|
	dx\nonumber\\
	&\leq& C
\int_{\Omega_L } |\nabla\bar{\varphi}_L|^2dx+\frac{C_1}{2} \varepsilon^{-4}\int_{\Omega} |\nabla(\varphi-\bar{\varphi}_L)|^2dx\nonumber\\
	&=& C
\int_{\Omega_L } |\nabla\bar{\varphi}_L|^2dx+\frac{C_1}{2} \int_{\Omega_L } |\nabla\tilde{\varphi}|^2dx.
	\end{eqnarray}	
	
By \eqref{ineq-phi} and \eqref{i2}, we get
\eqref{I-phi}.
It also implies that $I^{(\varepsilon)}(\varphi, \bar{\varphi}_L)$ is bounded from below, \emph{i.e.}, there exists a constant $C>0$  such that, for all $\tilde{\varphi}\in H_L$,
\begin{equation}
I^{(\varepsilon)}(\varphi,\bar{\varphi}_L)\geq -C.
\end{equation}

\textbf{Step 2}.
Note that $\Omega_L$ is a bounded domain, so $I^{(\varepsilon)}(\varphi,\bar{\varphi}_L)=I^{(\varepsilon)}(\bar{\varphi}_L+\varepsilon^2\tilde{\varphi},\bar{\varphi}_L)$ is finite for any $\tilde{\varphi}\in H_L$. Moreover, by \eqref{ineq-phi} and \eqref{i2}, we can also show that 
\begin{equation}\label{I-phi-1}
I^{(\varepsilon)}(\varphi, \bar{\varphi}_L) \leq \frac{C_1+2\tilde{C}_1}{2}\int_{\Omega_L } |\nabla\tilde{\varphi}|^2dx + C_3\int_{\Omega_L } |\nabla\bar{\varphi}_L|^2dx.
\end{equation}


\textbf{Step 3.} 	We will prove that $I^{(\varepsilon)}(\varphi, \bar{\varphi}_L)$ is uniformly convex in space $H_L$.

	Note that $I^{(\varepsilon)}_2(\varphi, \bar{\varphi}_L)$ is linear with respect to $\varphi$.  Then for any $\varphi_1,\, \varphi_2\in H_L$, we have that
\begin{eqnarray}
&&I^{(\varepsilon)}\left(\varphi_1,  \bar{\varphi}_L\right) + I^{(\varepsilon)} \left(\varphi_2,  \bar{\varphi}_L\right)
-2I^{(\varepsilon)}\left(\frac{\varphi_1+\varphi_2}{2},  \bar{\varphi}_L\right)\nonumber\\
&=&I_1^{(\varepsilon)}\left(\varphi_1,  \bar{\varphi}_L\right) + I_1^{(\varepsilon)} \left(\varphi_2,  \bar{\varphi}_L\right)
-2I_1^{(\varepsilon)}\left(\frac{\varphi_1+\varphi_2}{2},  \bar{\varphi}_L\right)\nonumber\\
&=&\int_{\Omega_L }F(\nabla\varphi_1) + F(\nabla\varphi_2)-2F\left(\frac{\nabla\varphi_1+\nabla\varphi_2}{2}\right) dx\nonumber\\
&\geq& \frac{C_1}{2} \varepsilon^{-4}\|\nabla\varphi_1-\nabla\varphi_2\|_{L^2}^2\nonumber\\
&=&\frac{C_1}{2} \|\nabla
\tilde{\varphi}_1-\nabla\tilde{\varphi}_2\|_{L^2}^2.\label{e:3.21}
\end{eqnarray}

It is the uniform convexity of $I^{(\varepsilon)}$.
	
\textbf{Step 4.} We are now ready to show the unique existence of the minimizer $\tilde{\varphi}^{(\varepsilon)} \in H_L$ of \textbf{Problem C4 ($m$, $L$)}, which satisfies \eqref{L2}. 	
	
Firstly, we will show the continuity of $I^{(\varepsilon)}(\bar{\varphi}_L+\varepsilon^2\tilde{\varphi},  \bar{\varphi}_L)$ with respect to $\tilde{\varphi}$ in $H_L$.
Let $\tilde{\varphi}_1$ and $\tilde{\varphi}_2$ in $H_L$, correspond to $\varphi_1$ and $\varphi_2$ via \eqref{3.31} respectively. We have 
	\begin{eqnarray}
	I^{(\varepsilon)}(\varphi_1,  \bar{\varphi}_L)-I^{(\varepsilon)}(\varphi_2,  \bar{\varphi}_L)&=&\varepsilon^{-4}\int_{\Omega_L }\left[ \frac{1}{2}\int_{|\nabla\varphi_2|^2}^{|\nabla\varphi_1|^2}\hat{\rho}^{\varepsilon}(\Lambda, \phi)d\Lambda -\nabla\bar{\varphi}_L\cdot\nabla\left(\varphi_1-\varphi_2\right)\right] dx\nonumber\\
	&=&\varepsilon^{-4}\int_{\Omega_L }\left[ \frac{1}{2}\int_{|\nabla\varphi_2|^2}^{|\nabla\varphi_1|^2}\hat{\rho}^{\varepsilon}(\Lambda, \phi)d\Lambda -\hat{\rho}^{\varepsilon}(|\nabla\bar{\varphi}_L|^2)\nabla\bar{\varphi}_L\cdot\nabla\left(\varphi_1-\varphi_2\right)\right] dx\nonumber\\
	&&+\varepsilon^{-2}\int_{\Omega_L }\left[ \varepsilon^{-2}\left(\hat{\rho}^{\varepsilon}(|\nabla\bar{\varphi}_L|^2, \phi)-1\right)\nabla\bar{\varphi}_L\cdot\nabla\left(\varphi_1-\varphi_2\right) \right] dx\nonumber
	\end{eqnarray} 
  
  Similar to the argument as done in \emph{Step 1} to obtain \eqref{ineq-phi} and \eqref{i2}, and by the H\"older inequality, we have:
	\begin{align}
	&\left| I^{(\varepsilon)}(\varphi_1,  \bar{\varphi}_L)-I^{(\varepsilon)}(\varphi_2,  \bar{\varphi}_L) \right| \nonumber\\ 
	\leq& \frac{\tilde{C}_1}{2}\int_{\Omega_L }|\nabla(\tilde{\varphi}_1-\tilde{\varphi}_2)|^2dx+ C\varepsilon^{-2}\int_{\Omega_L }|\nabla\bar{\varphi}_L||\nabla(\varphi_1-\varphi_2)|dx\\ 
	\leq&  C||\nabla\tilde{\varphi}_1-\nabla\tilde{\varphi}_2||_{L^2}^2
	+
	C||\nabla\tilde{\varphi}_1-\nabla\tilde{\varphi}_2||_{L^2}. \nonumber
	\end{align} 

	Then we have proved the continuity of the functional $I^{(\varepsilon)}(\bar{\varphi}_L+\varepsilon^2\tilde{\varphi},  \bar{\varphi}_L)$ with respect to $\tilde{\varphi}$ in $H_L$. Based on it, we can show the existence of the minimizer $\tilde{\varphi}^{(\varepsilon)}$ by the standard compactness argument via selecting a subsequence from the subsequence of $\tilde{\varphi}^{(i)}$, where $I^{(\varepsilon)}(\bar{\varphi}_L+\varepsilon^2\tilde{\varphi}^{(i)},  \bar{\varphi}_L)$ converges to the minimal value of the functional $I^{(\varepsilon)}(\bar{\varphi}_L+\varepsilon^2\tilde{\varphi},  \bar{\varphi}_L)$ in $H_L$. 

For the uniqueness, if $\varphi_1$ and $\varphi_2$ are two minimizers such that 
$$
I^{(\varepsilon)}(\varphi_1,\bar{\varphi}_L)=I^{(\varepsilon)}(\varphi_2,\bar{\varphi}_L)\leq I^{(\varepsilon)}(\frac{\varphi_1+\varphi_2}{2},\bar{\varphi}_L).
$$

Then by \eqref{e:3.21}, we know that 
\begin{equation}
0\geq I^{(\varepsilon)}\left(\varphi_1,  \bar{\varphi}_L\right) + I^{(\varepsilon)} \left(\varphi_2,  \bar{\varphi}_L\right)
-2I^{(\varepsilon)}\left(\frac{\varphi_1+\varphi_2}{2},  \bar{\varphi}_L\right)\geq \frac{C_1}{2} \varepsilon^{-4}\|\nabla\varphi_1-\nabla\varphi_2\|_{L^2}^2.
\end{equation}

Therefore, 
\begin{equation}
\nabla\varphi_1-\nabla\varphi_2=0.
\end{equation}

Note that $\tilde{\varphi}_1$, $\tilde{\varphi}_2\in H_L$. So $\tilde{\varphi}_1=\tilde{\varphi}_2$. It means that the minimizer is unique in $H_L$.
	
Finally, replacing $\varphi$ by $\bar{\varphi}_L$, we know that $I^{(\varepsilon)}(\varphi,\bar{\varphi}_L)\leq I^{(\varepsilon)}(\bar{\varphi}_L,\bar{\varphi}_L)$. So \eqref{I-phi} leads to
\begin{equation}
\int_{\Omega_L} |\nabla\tilde{\varphi}|^2dx\leq C\int_{\Omega_L} |\nabla\bar{\varphi}_L|^2dx,
\end{equation}
which leads to \eqref{L2} from \eqref{L1}.
\end{proof}

Next, we will show that the minimizer of \textbf{Problem C4 ($m$, $L$)} is actually a solution of \textbf{Problem C3 ($m$, $L$)}.

\begin{proposition}\label{th-ws} 
The minimizer of \textbf{Problem C4 ($m$, $L$)} satisfies equations \eqref{eqcc}.
\end{proposition}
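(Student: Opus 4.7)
The plan is to derive the Euler--Lagrange equation of \textbf{Problem C4 ($m$, $L$)}. Because $\bar{\varphi}_L$ is fixed and the map $\tilde\varphi \mapsto \varphi = \bar{\varphi}_L+\varepsilon^2\tilde\varphi$ is an affine bijection between $H_L$ and $\bar{\varphi}_L+\varepsilon^2 H_L$, testing with an arbitrary $\eta\in H_L$ amounts to perturbing $\varphi_L^{(\varepsilon)}$ by $\varepsilon^2\eta$. I would first record that $G^{(\varepsilon)}_\Lambda(\Lambda,\phi)=\tfrac12\hat\rho^{(\varepsilon)}(\Lambda,\phi)$, which converts the derivative of $G^{(\varepsilon)}(|\nabla\varphi|^2,\phi)$ in the direction $\nabla\eta$ into $\hat\rho^{(\varepsilon)}(|\nabla\varphi|^2,\phi)\,\nabla\varphi\cdot\nabla\eta$.

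Next I would compute, for each $t\in\mathbb{R}$ and $\eta\in H_L$,
\[
\frac{d}{dt}\bigg|_{t=0} I^{(\varepsilon)}\!\left(\varphi_L^{(\varepsilon)}+\varepsilon^2 t\eta,\bar{\varphi}_L\right)
=\varepsilon^{-2}\int_{\Omega_L}\left[\hat\rho^{(\varepsilon)}(|\nabla\varphi_L^{(\varepsilon)}|^2,\phi)\nabla\varphi_L^{(\varepsilon)}-\nabla\bar{\varphi}_L\right]\cdot\nabla\eta\,dx,
\]
and set this to zero since $\tilde\varphi_L^{(\varepsilon)}$ is a minimizer. Differentiation under the integral is justified by the continuity estimates already obtained in Theorem \ref{th-vc} (Step 4), together with the $C^{1,\alpha}$ regularity of $\bar{\varphi}_L$ from Lemma \ref{lem-3} and the uniform ellipticity bounds \eqref{li-3.3} for $\hat\rho^{(\varepsilon)}$.

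I would then extract the three conditions of \eqref{eqcc} by choosing successively more general test functions. First, taking $\eta\in C_c^\infty(\Omega_L)$, integration by parts together with $\Delta\bar{\varphi}_L=0$ in $\Omega_L$ (from $\eqref{eqbf}_1$) yields $\operatorname{div}\bigl(\hat\rho^{(\varepsilon)}(|\nabla\varphi_L^{(\varepsilon)}|^2,\phi)\nabla\varphi_L^{(\varepsilon)}\bigr)=0$ in $\Omega_L$, which is $\eqref{eqcc}_1$. Second, letting $\eta\in H_L$ be supported away from $S_L^-\cup S_L^+$, the interior equation eliminates the bulk term, and using $\partial_{\mathbf n}\bar{\varphi}_L=0$ on $\partial\Omega\cap\partial\Omega_L$ (from $\eqref{eqbf}_2$) together with the arbitrariness of the trace of $\eta$ on $\partial\Omega\cap\partial\Omega_L$, I obtain $\partial_{\mathbf n}\varphi_L^{(\varepsilon)}=0$ there, giving $\eqref{eqcc}_2$. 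Third, taking $\eta\in H_L$ that does not vanish on $S_L^+$ and using $\partial_{x_n}\bar{\varphi}_L=m/|S_L^+|$ on $S_L^+$ (from $\eqref{eqbf}_3$), the remaining boundary integral forces $\hat\rho^{(\varepsilon)}(|\nabla\varphi_L^{(\varepsilon)}|^2,\phi)\,\partial_{x_n}\varphi_L^{(\varepsilon)}=m/|S_L^+|$ on $S_L^+$, which is $\eqref{eqcc}_3$. The Dirichlet condition $\varphi_L^{(\varepsilon)}=0$ on $S_L^-$ is automatic from the definitions $\varphi_L^{(\varepsilon)}=\bar{\varphi}_L+\varepsilon^2\tilde\varphi_L^{(\varepsilon)}$ and $\bar{\varphi}_L|_{S_L^-}=\tilde\varphi_L^{(\varepsilon)}|_{S_L^-}=0$, which recovers $\eqref{eqcc}_4$.

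There is no genuine obstacle beyond bookkeeping here: the main point is just the careful handling of the three distinct boundary pieces of $\partial\Omega_L$ and the cancellation between $\hat\rho^{(\varepsilon)}\nabla\varphi_L^{(\varepsilon)}$ and $\nabla\bar{\varphi}_L$ that is engineered into the definition \eqref{li01} of $I^{(\varepsilon)}$. The only mild subtlety is that the factor $\varepsilon^{-4}$ in $I^{(\varepsilon)}$ and the factor $\varepsilon^2$ in the perturbation direction combine to give a finite, $\varepsilon$-independent Euler--Lagrange equation, confirming that the rescaling in \eqref{3.31} is consistent with the singular limit $\varepsilon\to 0$ studied later in the paper.
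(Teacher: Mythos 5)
Your proposal is correct and follows essentially the same route as the paper: compute the first variation of $I^{(\varepsilon)}$ in the direction $\eta\in H_L$, use $2G^{(\varepsilon)}_\Lambda=\hat\rho^{(\varepsilon)}$, and let the $\nabla\bar{\varphi}_L$ term supply the mass-flux boundary datum before integrating by parts. The only cosmetic difference is that the paper substitutes the weak identity \eqref{weakform} for $\int_{\Omega_L}\nabla\bar{\varphi}_L\cdot\nabla\eta\,dx$ directly, whereas you invoke the pointwise boundary conditions of $\bar{\varphi}_L$ from \eqref{eqbf} piece by piece; both yield the same weak formulation of \eqref{eqcc}.
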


\begin{proof}
	For any
	$t\in \R_+$ and for any $\eta\in H_L$, we have that $\varphi^{(\varepsilon)}_L+t\eta\in H_L$. Then,
\begin{eqnarray}
0&\leq& \liminf_{t\ra 0^+}\f 1t(I^{(\varepsilon)}(\varphi^{(\varepsilon)}_L + t\eta,\bar{\varphi}_L) - I^{(\varepsilon)}(\varphi,\bar{\varphi}_L))\nonumber\\
&= &\liminf_{t\ra 0^+}\f 1t \varepsilon^{-4}\int_{\Omega_L} 
\left[G^{(\varepsilon )}\left(|\nabla(\varphi^{(\varepsilon)}_L+t\eta)|^2, \phi\right)
-G^{(\varepsilon)}\left(|\nabla\bar{\varphi}_L|^2, \phi\right)
-\nabla\bar{\varphi}_L\cdot\left(\nabla(\varphi^{(\varepsilon)}_L+t\eta)-\nabla\bar{\varphi}_L\right)
\right]dx\nonumber\\
&=&\varepsilon^{-4}\int_\Omega\left[2 G^{(\varepsilon)}_\Lambda \left(|\nabla\varphi_L^{(\varepsilon)} |^2, \phi\right)\nabla\varphi^{(\varepsilon)}_L\cdot\nabla\eta-\nabla\bar{\varphi}_L\cdot\nabla\eta\right]dx
\end{eqnarray}

Note that $\eta$ is arbitrary, so
\begin{eqnarray}
\int_{\Omega_L} \hat{\rho}^{(\varepsilon)}(|\nabla\varphi^{(\varepsilon)}_L|^2, \phi)\nabla\varphi^{(\varepsilon)}_L\cdot\nabla\eta  dx&=&\int_\Omega\left[2 G^{(\varepsilon)}_\Lambda\left(|\nabla\varphi_L^{(\varepsilon)} |^2, \phi\right)\nabla\varphi^{(\varepsilon)}_L\cdot\nabla\eta\right]dx\nonumber\\
&=&\int_{\Omega_L} \nabla\bar{\varphi}_L\cdot\nabla\eta dx\nonumber\\
&=&\frac{m}{|\slb|}\int_\slb\eta dx'.\nonumber
\end{eqnarray}

Then \eqref{eqcc} follows by the integration by part.
\end{proof}

Finally, for the regularity of solution of \textbf{Problem C3 ($m$, $L$)}, by the standard elliptic estimate (cf. see \cite{Gilbarg-Trudinger}), we actually have the following lemma.

\begin{lemma}
	Assume \eqref{H1} holds, then there exist constants $0<\alpha<1$ and $C$ depending on $\Omega_L$ such that for any solution $\varphi_L$ of \textbf{Problem C3 ($m$, $L$)}, we have the same estimates as the ones in Lemma \ref{lem-3} by replacing $\bar{\varphi}_L$ by $\varphi_L$.
\end{lemma}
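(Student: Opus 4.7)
The plan is to reduce the lemma to standard quasilinear elliptic regularity by exploiting the uniform ellipticity guaranteed by the subsonic cut-off. The equation $\mathrm{div}(\hat{\rho}^{\varepsilon}(|\nabla\varphi_L^{(\varepsilon)}|^2,\phi)\nabla\varphi_L^{(\varepsilon)})=0$ can be rewritten in the non-divergence form $\sum_{i,j}\hat{a}_{ij}(\nabla\varphi_L^{(\varepsilon)},\phi)\partial_{ij}\varphi_L^{(\varepsilon)}+\sum_i \hat{b}_i(\nabla\varphi_L^{(\varepsilon)},\phi)\partial_i\varphi_L^{(\varepsilon)}=0$, and by \eqref{li-3.3} the principal part is uniformly elliptic with ellipticity constants $\hat{\lambda}_1,\hat{\lambda}_2$ independent of $\varphi_L^{(\varepsilon)}$ and $\varepsilon$, while $|\hat{b}_i|\le C|\partial_i\phi|\in L^q_{loc}$ with $q>n$ by \eqref{psicondition1}. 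Because the estimates are claimed on $\Omega_{L/2}$, the inlet/outlet boundaries $S_L^\pm$ play no role, and only the lateral wall $\partial\Omega\cap\partial\Omega_L$ together with interior regularity must be handled.

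First, I would differentiate the equation: setting $w=\partial_k\varphi_L^{(\varepsilon)}$ for each $k$, each $w$ satisfies a linear divergence-form equation $\mathrm{div}(A(x)\nabla w)+\mathrm{div}(f_k)=0$, where $A=(\hat{a}_{ij})$ inherits uniform ellipticity, and $f_k$ absorbs the dependence on $\partial_k\phi$. This lets me apply the De Giorgi–Nash–Moser theory to obtain an interior $L^\infty$ bound and a Hölder estimate for $w=\nabla\varphi_L^{(\varepsilon)}$ on compact subsets, with constants controlled by $\|\nabla\varphi_L^{(\varepsilon)}\|_{L^2(\Omega_L)}$ and the $L^q$ norm of $\nabla\phi$. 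To handle the lateral wall, I would flatten $\partial\Omega$ locally using the $C^{2,\alpha}$ map $T$ from \eqref{H1}; the slip condition $\partial\varphi_L^{(\varepsilon)}/\partial\mathbf{n}=0$ becomes an oblique/Neumann-type condition on the flattened boundary, preserving uniform ellipticity after the change of variables, so boundary Moser iteration and boundary Hölder estimates (e.g. Lieberman or Giaquinta–Giusti) apply. This yields the first estimate in Lemma \ref{lem-3} with $\varphi_L^{(\varepsilon)}$ in place of $\bar{\varphi}_L$.

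Once $\nabla\varphi_L^{(\varepsilon)}$ is uniformly bounded and Hölder continuous on $\overline{\Omega_{L/2}}$, the coefficients $\hat{a}_{ij}(\nabla\varphi_L^{(\varepsilon)},\phi)$ become Hölder continuous (as long as $\phi$ is Hölder, which follows from $\nabla\phi\in L^q_{loc}$ with $q>n$ via Morrey's inequality, after possibly shrinking $\alpha$). The equation for $\varphi_L^{(\varepsilon)}$ is then linear with Hölder coefficients, so interior and boundary Schauder estimates give the second estimate, namely the $C^{1,\alpha}$ bound on $\Omega_{L/2}$. The bootstrap is finite because no further regularity is sought beyond that of $\phi$ itself.

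The main obstacle I anticipate is the joint treatment of the quasilinearity and the low regularity of $\phi$: the lower-order coefficient $\hat{b}_i$ involves $\partial_i\phi\in L^q_{loc}$ but not necessarily continuous, so one cannot directly invoke classical $C^{1,\alpha}$ results for smooth coefficients. The resolution is to treat $\hat{b}_i\partial_i\varphi_L^{(\varepsilon)}$ as a right-hand side in $L^q_{loc}$ once $\nabla\varphi_L^{(\varepsilon)}$ is bounded, and to invoke the $W^{1,q}$-to-$C^{0,1-n/q}$ embedding to close the argument. The construction is local, the constants depend only on $\Omega_L$, the ellipticity constants in \eqref{li-3.3}, the $C^{2,\alpha}$ norm of $T$, and the $L^q_{loc}$ norm of $\nabla\phi$, all independent of $\varepsilon$, which matches the uniformity asserted in the lemma.
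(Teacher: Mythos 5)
Your proposal is correct and follows essentially the same route the paper takes (the paper defers this lemma to ``standard elliptic estimates'' but carries out exactly your argument in the proof of Lemma \ref{lem-3localcompressible}: differentiate the equation, put $w=\partial_k\varphi_L^{(\varepsilon)}$ into a uniformly elliptic divergence-form equation with an $L^q$ inhomogeneity, and apply Theorems 8.24 and 8.29 of \cite{Gilbarg-Trudinger} for the interior and lateral-wall estimates). The only point worth making explicit is that the apparent circularity in bounding the term $\hat{b}_i\partial_i\varphi_L^{(\varepsilon)}$ ``once $\nabla\varphi_L^{(\varepsilon)}$ is bounded'' is resolved by the cut-off itself: $\hat{q}_\phi$ vanishes where $|\nabla\varphi_L^{(\varepsilon)}|$ exceeds $\mathring{q}^{\varepsilon_0}_{\frac{\theta+1}{2}}(\phi)$, so $|\hat{b}_i\partial_i\varphi_L^{(\varepsilon)}|\leq C|\partial_i\phi|$ holds a priori, with no boundedness assumption on the gradient.
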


\section{Existence of Solutions of \textbf{Problem I1 ($m$)} and \textbf{Problem C2 ($m$)}}
In order to pass the limit $L\rightarrow\infty$ to obtain solutions of \textbf{Problem I1 ($m$)} and \textbf{Problem C2 ($m$)} from solutions of \textbf{Problem I1 ($m$, $L$)}  or \textbf{Problem C1 ($m$, $L$)} respectively, we need to derive the uniform estimate of solutions of \textbf{Problem I1 ($m$, $L$)}  or \textbf{Problem C2 ($m$, $L$)} with respect $L$. It is the local average estimate.

For the local average estimate, we need to introduce the local set: For $x_0=(x_0', x_{0,n})\in \O$, let
\[
\O_{(a, b)} :=\{x=(x', x_n)\in \O \ |\ a<x_n<b\},
\]
and define 

\begin{equation}
P_{(a, b)}(\varphi)=\frac{1}{|\Omega_{(a, b)}|}\int_{\Omega_{(a, b)}}\varphi(x) dx.
\end{equation}

First, from the properties of the nozzle \eqref{H1}, we have the following lemma and proposition of the Poincar\'{e} inequality.
\begin{lemma}[Uniform Poincar\'e Inequality]\label{th-upi}
	For any $a\in \R$, $1\leq p
	<\infty$, one has \begin{equation}\label{eq-upi}
	\left\| \varphi (x) -P_{(a, a+1)}(\varphi) \right\|_{L^p(\O_{(a, a+1)})} \leq C\|\nabla \varphi(x)\|_{L^p(\O_{(a,a+1)})}, \end{equation} where
	$C$ is a positive constant depending only on $p$, $\O$, independent
	of $a$.
\end{lemma}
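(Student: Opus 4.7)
The plan is to reduce the inequality on the curved strip $\Omega_{(a,a+1)}$ to the classical Poincaré inequality on the straight cylindrical slice $D_a := B(0,1)\times(a,a+1)$, then transport constants back using the uniform diffeomorphism $T$ from (H1). The key feature that produces uniformity in $a$ is that $D_a$ is simply a translate of $D_0$ in the $y_n$-direction, so the classical Poincaré constant on $D_a$ is the same for every $a$.

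More concretely, I would first set $\psi(y):=\varphi(T^{-1}(y))$ for $y\in D_a$; by (H1), $T$ maps $\Omega_{(a,a+1)}$ diffeomorphically onto $D_a$, and the bound $\|T\|_{C^{2,\alpha}}+\|T^{-1}\|_{C^{2,\alpha}}\leq K$ gives uniform two-sided bounds on the Jacobians $|\det DT|$ and $|\det DT^{-1}|$ and on $|DT|$, $|DT^{-1}|$, all depending only on $K$. Next, I would invoke the standard Poincaré inequality on $D_a$,
\begin{equation*}
\|\psi-P_{D_a}(\psi)\|_{L^p(D_a)}\leq C_0(p)\,\|\nabla_y\psi\|_{L^p(D_a)},
\end{equation*}
where $C_0(p)$ is independent of $a$ by translation invariance in $y_n$ (explicitly, via the substitution $\tilde\psi(y',y_n):=\psi(y',y_n+a)$ on $D_0$).

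The one technical wrinkle is that the average of $\psi$ over $D_a$ is not the same number as $P_{(a,a+1)}(\varphi)$, because the change of variables has a nonconstant Jacobian. I would handle this by the standard observation that the mean is the best $L^p$-approximating constant up to a factor of $2$: for any constant $c$,
\begin{equation*}
\|\varphi-P_{(a,a+1)}(\varphi)\|_{L^p(\Omega_{(a,a+1)})}\leq 2\,\|\varphi-c\|_{L^p(\Omega_{(a,a+1)})},
\end{equation*}
which follows from Hölder's inequality applied to $|c-P_{(a,a+1)}(\varphi)|=|P_{(a,a+1)}(c-\varphi)|$. Choosing $c:=P_{D_a}(\psi)$, changing variables $y=T(x)$ with Jacobian bounded by $K$-dependent constants, applying the cylindrical Poincaré inequality above, and then changing variables back using $\nabla_y\psi=(DT^{-1})^{\top}(\nabla_x\varphi)\circ T^{-1}$ together with the uniform bound on $|DT^{-1}|$, yields the desired inequality with a constant depending only on $p$ and $K$ (hence only on $p$ and $\Omega$).

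The only real obstacle is bookkeeping: one must verify that at every stage the constants introduced depend only on $p$ and on the uniform $C^{2,\alpha}$ bound $K$ for $T$, never on $a$. The translation invariance in Step 2 is the single place where the independence from $a$ is genuinely used, and all remaining factors come from the uniform geometry of the nozzle encoded in (H1).
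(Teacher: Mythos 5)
Your proof is correct and complete: the reduction to the translated cylinder slice $B(0,1)\times(a,a+1)$ via the uniform diffeomorphism $T$ of \eqref{H1}, the translation invariance of the Poincar\'e constant on that slice, and the factor-$2$ best-constant trick to reconcile the average over $\Omega_{(a,a+1)}$ with the average of the transported function all go through with constants depending only on $p$ and the bound $K$, hence independent of $a$. The paper itself gives no proof, deferring to Theorem 3 of \cite{Du-Yan-Xin}, and your argument is the standard one underlying that reference, so there is nothing to compare beyond noting that you have supplied the omitted details correctly.
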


This lemma is Theorem 3 in \cite{Du-Yan-Xin}, so we omit the proof.

Moreover, in $\Omega$, it follows from Lemma \ref{th-upi} that we also have the following Poincar\'{e} type inequality.

\begin{proposition}\label{prop1}
For $a<b$, one can obtain:
\begin{equation}
|P_{[a-1, a]}(\varphi)-P_{[b, b+1]}(\varphi)|\leq C \int_{\O_{(a-1, b+1)}}|\nabla\varphi| dx,
\end{equation}
where constant $C$ only depends on $\Omega$ and does not dependent on $a$ and $b$.
\end{proposition}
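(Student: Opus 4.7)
The plan is to reduce the bound between the two far-apart averages to a sum of bounds between averages on consecutive unit slabs, and then to control each such ``one-step'' difference via Lemma \ref{th-upi} together with a path-integration argument. First I would establish that for every $s \in \R$ one has
\begin{equation*}
\bigl|P_{[s, s+1]}(\varphi) - P_{[s+1, s+2]}(\varphi)\bigr| \leq C \int_{\Omega_{(s, s+2)}} |\nabla \varphi|\, dx,
\end{equation*}
with $C$ depending only on $\Omega$. Starting from the elementary identity
\begin{equation*}
P_{[s+1, s+2]}(\varphi) - P_{[s, s+1]}(\varphi) = \frac{1}{|\Omega_{(s, s+1)}|\,|\Omega_{(s+1, s+2)}|}\int_{\Omega_{(s, s+1)}}\int_{\Omega_{(s+1, s+2)}}\bigl(\varphi(y) - \varphi(x)\bigr)\, dy\, dx,
\end{equation*}
I would, for each pair $(x, y)$, choose a path $\gamma_{x,y}\subset \Omega_{(s, s+2)}$ of length uniformly bounded (obtained by pulling back a straight segment in the reference cylinder via $T^{-1}$) and use $\varphi(y) - \varphi(x) = \int_{\gamma_{x,y}}\nabla\varphi\cdot d\ell$. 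After swapping the order of integration, the triple integral becomes $\int_{\Omega_{(s, s+2)}}|\nabla\varphi(z)|\,\mu(z)\,dz$ with a multiplicity function $\mu$ uniformly bounded in $s$ by the Jacobian control in \eqref{H1}.

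Next I would telescope. Assume for simplicity that $b - a$ is a positive integer (the general case reduces to this by a minor shift of the endmost slab). Setting $t_j = a - 1 + j$ for $j = 0, 1, \ldots, b - a + 1$, the unit slabs $[t_j, t_j+1]$ tile $[a-1, b+1]$, while the wider slabs $\Omega_{(t_j, t_j+2)}$ used in the one-step bound cover each point at most twice. The triangle inequality combined with the one-step estimate then yields
\begin{align*}
\bigl|P_{[a-1, a]}(\varphi) - P_{[b, b+1]}(\varphi)\bigr|
&\leq \sum_{j=0}^{b-a}\bigl|P_{[t_j, t_j+1]}(\varphi) - P_{[t_{j+1}, t_{j+1}+1]}(\varphi)\bigr| \\
&\leq C\sum_j \int_{\Omega_{(t_j, t_j+2)}}|\nabla\varphi|\,dx \\
&\leq 2C \int_{\Omega_{(a-1, b+1)}}|\nabla\varphi|\,dx,
\end{align*}
which is the claimed inequality with constant independent of $a$ and $b$.

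The only non-routine step is the one-step estimate itself. Lemma \ref{th-upi} only bounds the deviation of $\varphi$ from its own average on a single unit slab and does not directly compare averages over two disjoint slabs; the path-integration argument above is the standard way to bridge this gap. The essential uniformity inputs come from the nozzle geometry \eqref{H1}: both that $|\Omega_{(s, s+1)}|$ is uniformly bounded above and below, and that paths of uniformly bounded length joining arbitrary points of adjacent unit cross-sections exist, since $T$ maps the nozzle $C^{2,\alpha}$-diffeomorphically onto the unit cylinder with uniform norm bound $K$. Once the one-step bound is secured with these uniform constants, the telescoping summation is immediate and preserves uniformity.
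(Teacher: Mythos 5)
Your proof is correct, and it supplies an argument that the paper itself omits: the paper merely asserts that Proposition \ref{prop1} ``follows from Lemma \ref{th-upi}'' without giving details, and the telescoping-through-unit-slabs structure you use is exactly the intended route. Two small remarks. First, your one-step estimate could be obtained more cheaply by observing that Lemma \ref{th-upi} holds verbatim (with a uniform constant) for slabs of width $2$, since the proof in Du--Xin--Yan only uses the uniform geometry \eqref{H1}; then $\bigl|P_{[s,s+1]}(\varphi)-P_{[s+1,s+2]}(\varphi)\bigr|\leq \bigl|P_{[s,s+1]}(\varphi)-P_{(s,s+2)}(\varphi)\bigr|+\bigl|P_{(s,s+2)}(\varphi)-P_{[s+1,s+2]}(\varphi)\bigr|$, and each term is controlled by $C\|\varphi-P_{(s,s+2)}(\varphi)\|_{L^1(\Omega_{(s,s+2)})}\leq C\|\nabla\varphi\|_{L^1(\Omega_{(s,s+2)})}$ because $|\Omega_{(s,s+1)}|$ is bounded below uniformly in $s$. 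This avoids redoing the segment argument. Second, if you do keep the path-integration proof, be slightly careful with the claim that the weight $\mu(z)$ is ``uniformly bounded'': for a fixed endpoint $x$ the kernel produced by the change of variables along the segment behaves like $|z-x|^{1-n}$ and is unbounded; it is only after integrating over \emph{both} endpoints $x$ and $y$ of the double average that the resulting weight becomes bounded (by $\int_{D}|z-x|^{1-n}dx\leq C\,\mathrm{diam}(D)$ on the convex reference slab $B(0,1)\times(s,s+2)$, transported by $T^{-1}$ with Jacobian controlled by $K$). With that clarification, the one-step bound and the telescoping with overlap multiplicity $2$ give the stated inequality with a constant depending only on $\Omega$.
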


Based on the inequalities, we will introduce the local average lemma case by case.

\subsection{The local average lemma for the incompressible case}

Now, let us introduce the local average lemma for the incompressible case firstly.

\begin{lemma}\label{th-lae} 
	There is a uniform $l$ depending only on $\Omega$, such that for any $b-a>l$ and $\O_{(a-1, b+1)}\subset \O_{\frac{L}{2}}$ , the solution $\bar{\varphi}_L$ of equations \eqref{eqbf} satisfies
	\begin{equation}\label{eq-LAE-1}
	\frac{1}{|\Omega_{(a, b)}|}\int_{\O_{(a, b)}}|\nabla\bar{\varphi}_L|^2 dx \leq Cm^2,
	\end{equation}
	where $C$ does not depend on  $L$.
\end{lemma}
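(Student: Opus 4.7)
The plan is to derive a Caccioppoli-type energy identity, exploit conservation of mass flux to eliminate the constants that would otherwise obstruct Poincar\'e, and then close the estimate uniformly in $L$ via a Widman-type hole-filling iteration combined with the global energy bound of Theorem~\ref{th-ae}. I choose a Lipschitz cutoff $\eta=\eta(x_n)$ with $\eta\equiv 1$ on $[a,b]$, $\eta\equiv 0$ outside $[a-1,b+1]$, and $|\eta'|\leq 2$. Since $\Omega_{(a-1,b+1)}\subset\Omega_{L/2}$, the function $\psi:=\eta^2\bar{\varphi}_L$ lies in $H_L$ and vanishes on $S_L^+$, so the weak form of \eqref{eqbf} tested against $\psi$ gives
\[
\int_{\Omega_L}\eta^2|\nabla\bar{\varphi}_L|^2\,dx=-2\int_{\Omega_L}\eta\,\eta'(x_n)\,\bar{\varphi}_L\,\partial_n\bar{\varphi}_L\,dx.
\]

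The key identity is $\int_{\Omega\cap\{y_n=t\}}\partial_n\bar{\varphi}_L\,dx'=m$ for every $t\in(-L,L)$, obtained from harmonicity and the slip condition. Splitting $\eta'=\eta'_1+\eta'_2$ along its supports in $[a-1,a]$ and $[b,b+1]$, I write $\bar{\varphi}_L=(\bar{\varphi}_L-c_i)+c_i$ on each piece with $c_1:=P_{(a-1,a)}(\bar{\varphi}_L)$ and $c_2:=P_{(b,b+1)}(\bar{\varphi}_L)$. The variable contributions are handled by Cauchy--Schwarz and the uniform Poincar\'e inequality (Lemma~\ref{th-upi}) on each unit end-window, while Fubini and $\int\eta\eta'_i\,dx_n=\pm 1/2$ collapse the constant contributions into the single residual $(c_2-c_1)m$, which is then controlled by Proposition~\ref{prop1} and Young's inequality:
\[
|c_2-c_1|\,m\leq Cm\,|\Omega_{(a-1,b+1)}|^{1/2}\|\nabla\bar{\varphi}_L\|_{L^2(\Omega_{(a-1,b+1)})}\leq\tfrac14\|\nabla\bar{\varphi}_L\|_{L^2(\Omega_{(a-1,b+1)})}^2+Cm^2(b-a+2).
\]
Absorbing the portion of this last term supported on $\Omega_{(a,b)}$ into the left-hand side and writing $e(\alpha,\beta):=\|\nabla\bar{\varphi}_L\|_{L^2(\Omega_{(\alpha,\beta)})}^2$, I obtain the Caccioppoli-type inequality
\[
e(a,b)\leq C_1\bigl[e(a-1,a)+e(b,b+1)\bigr]+C_2m^2(b-a+2).
\]

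The main obstacle is this closure step, since the neighbor-to-neighbor estimate alone does not produce an $L$-independent bound. I resolve it by Widman's hole-filling trick: adding $C_1e(a,b)$ to both sides rearranges the bound into $e(a,b)\leq\theta\,e(a-1,b+1)+\tilde{C}m^2(b-a+2)$ with $\theta:=C_1/(1+C_1)\in(0,1)$. Iterating $N$ times and summing the resulting geometric series yields
\[
e(a,b)\leq\theta^N e(a-N,b+N)+\frac{\tilde{C}m^2(b-a+2)}{1-\theta}+C'm^2.
\]
Because $\Omega_{(a-1,b+1)}\subset\Omega_{L/2}$ forces $L\geq b-a+2$, I may take $N$ of order $L/2$ so that $\Omega_{(a-N,b+N)}=\Omega_L$, and Theorem~\ref{th-ae} then gives $e(a-N,b+N)\leq Cm^2|\Omega_L|\leq Cm^2 L$. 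As $\theta^N L\to 0$ when $L\to\infty$, this first term stays bounded uniformly in $L$ once $L$ exceeds some threshold $L_0=L_0(\Omega,\theta)$; choosing $l\geq L_0$ ensures via $b-a>l$ that $L>L_0$, yielding $e(a,b)\leq Cm^2(b-a+1)$ and therefore the stated $L^2$-average bound with $C$ independent of $L$.
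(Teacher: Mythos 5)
Your proposal is correct and follows essentially the same route as the paper: a Caccioppoli-type identity from testing the weak form of \eqref{eqbf} with a cutoff, cancellation of the end-window averages against the constant mass flux, the uniform Poincar\'e inequality (Lemma~\ref{th-upi}) and Proposition~\ref{prop1} to control the residual $(c_2-c_1)m$, and a hole-filling contraction closed by the global energy bound of Theorem~\ref{th-ae}. The only differences are cosmetic: you test with $\eta^2\bar{\varphi}_L$ and split off the averages afterwards rather than pre-subtracting the piecewise-linear interpolant, and you close the iteration by an explicit geometric sum on the un-normalized energy (using $\theta^{N}\cdot Cm^2L\to 0$ for $N\sim L/2$) where the paper normalizes by $b-a$ and invokes Lemma 8.23 of Gilbarg--Trudinger; both variants require $b-a>l$ for the same reason.
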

\begin{proof}
For any $-\frac {L}{2}<a-1<a<b<b+1<\frac{ L}{2}$, let $\eta$ be a smooth function such that $\eta\in C^\infty (\ol)$, $0\leq\eta\leq 1$, $|\g\eta|\leq 1$, $\eta|_{\O_{(a, b)}}=1$, and $\eta|_{\O-\O_{(a-1, b+1)}}=0$.

Then, define the test function as $\eta^2\hat\fai\in H^1(\ol)$, where
\[
\hat\fai(x) =
\begin{cases}
\bar{\varphi}_L(x) - {P}_{[a-1, a]}(\bar{\varphi}_L),&\qquad x_n\leq a,\\
\bar{\varphi}_L(x)-{P}_{[a-1, a]}(\bar{\varphi}_L) -({P}_{[b, b+1]}(\bar{\varphi}_L)-{P}_{[a-1, a]}(\bar{\varphi}_L))\f{x_n-a}{b-a},&\qquad a\leq x_n\leq b,\\
\bar{\varphi}_L(x) - {P}_{[b, b+1]}(\bar{\varphi}_L),&\qquad x_n\geq b.
\end{cases}
\]

Obviously,
$\eta^2\hat\fai\in H_L$. From equations \eqref{eqbf} and by the integration by part, we have
\[
\int_\ol \g\bar{\varphi}_L\cdot\g(\eta^2\hat\fai)dx = 0.
\]

Note that $\nabla\hat\phi=\nabla\bar{\varphi}_L -\frac{ {P}_{[b, b+1]}(\bar{\varphi}_L) - {P}_{[a-1, a]}(\bar{\varphi}_L)}{b-a}\chi_{(a,b)}(x)\vec{e}_n$, where
$\vec{e}_n=(0, \cdots, 0, 1)$, and $\chi_{(a,b)}(x)$ is the
characteristic function of $\Omega_{(a, b)}$.

Therefore,
\begin{eqnarray}
 &&\int_{\O_{(a-1, b+1)}}\eta^2|\g\bar{\varphi}_L|^2 dx
-\int_{\O_{(a, b)}}\eta^2\f{\p\bar{\varphi}_L}{\p x_n}\left(\f{{P}_{[b, b+1]}(\bar{\varphi}_L)-{P}_{[a-1, a]}(\bar{\varphi}_L)}{b-a}\right)dx\nonumber\\
&=& -2\int_{\O_{(a-1, b+1)}}\eta\g\bar{\varphi}_L\cdot\g\eta\hat\fai
dx.
\end{eqnarray}

Note that for any $a<x_n<b$, the conservation of mass flux \eqref{massfluxcondition} implies that
$$
\int_{S_{x_n}}\f{\p\bar{\varphi}_L}{\p x_n}dx' = m.
$$

Then, we have
\begin{equation*}
\int_{\O_{(a-1, b+1)}}\eta^2|\g\bar{\varphi}_L|^2 dx =
-2\int_{\O_{(a-1, b+1)}} \eta\g\bar{\varphi}_L\cdot\g\eta\hat\fai dx
+({P}_{[b, b+1]}(\bar{\varphi}_L)-{P}_{[a-1, a]}(\bar{\varphi}_L))m.
\end{equation*}

Consequently, by the fact that $\nabla\eta=0$ on $\Omega_{(a, b)}$, we have
\begin{eqnarray}\label{n23-1}
\int_{\O_{(a,b)}}|\g\bar{\varphi}_L|^2dx &\leq& \int_{\O_{(a-1,b+1)}}\eta^2|\g\bar{\varphi}_L|^2 dx\nonumber\\
&\leq& 2\left|\int_{\O_{(a-1, a)}\cup\O_{(b, b+1)}} \eta\g\bar{\varphi}_L\cdot\g\eta\hat\fai dx\right| +\left|{P}_{[b, b+1]}(\bar{\varphi}_L)-{P}_{[a-1, a]}(\bar{\varphi}_L)\right|m\nonumber\\
&\leq& C\left|\int_{\O_{(a-1, a)}\cup\O_{(b, b+1)}} |\g\bar{\varphi}_L||\hat\fai|dx\right| +\left|{P}_{[b, b+1]}(\bar{\varphi}_L)-{P}_{[a-1, a]}(\bar{\varphi}_L)\right|m\nonumber\\
&\leq& C\left(\int_{\O_{(a-1, a)}}|\g\bar{\varphi}_L|^2dx\right)^{\f 12}\left(\int_{\O_{(a-1, a)}}|\bar{\varphi}_L - {P}_{[a-1, a]}(\bar{\varphi}_L)|^2dx\right)^{\f 12}\nonumber\\
&& + C \left(\int_{\O_{(b, b+1)}}|\g\bar{\varphi}_L|^2dx\right)^{\f
	12}\left(\int_{\O_{(b, b+1)}}|\bar{\varphi}_L - {P}_{[b, b+1]}(\bar{\varphi}_L)|^2dx\right)^{\f 12}\nonumber\\
&&+\left|{P}_{[b, b+1]}(\bar{\varphi}_L)-{P}_{[a-1, a]}(\bar{\varphi}_L)\right|m.
\end{eqnarray}


Then, by Lemma \ref{th-upi}, Proposition \ref{prop1} and \eqref{n23-1}, we know that
\begin{equation}\label{n25-1} \int_{\O_{(a, b)}}|\g\bar{\varphi}_L|^2 dx \leq
C_1\int_{\O_{(a-1, b+1)} -\O_{(a, b)}}|\g\bar{\varphi}_L|^2dx +
C_2 m\int_{\O_{(a-1,b+1)}}|\g\bar{\varphi}_L|dx
\end{equation}
where $C_1$ and $C_2$ are two uniform positive constants.

On the other hand, for any $\delta\ll 1 $, we have 
\begin{equation}\label{n26-1}  m\int_{\O_{(a-1, b+1)}}|\g\bar{\varphi}_L|dx
\leq \delta \int_{\O_{(a-1, b+1)}}|\g\bar{\varphi}_L|^2dx + \f
{1}{4\delta }|\Omega_{(a-1, b+1)}|m^2.
\end{equation}
Here $|\Omega_{(a-1, b+1)}|$ is the measure of $\Omega_{(a-1, b+1)}$.
Combining (\ref{n25-1}) and (\ref{n26-1}) together, we have
\[
\int_{\O_{(a, b)}}|\g\bar{\varphi}_L|^2 dx \leq
\left(\frac{C_1+C_2\delta }{C_1+1}\right)\int_{\O_{(a-1, b+1)}}|\g\bar{\varphi}_L|^2dx + \frac{C_2}{4\delta (C_1+1)}|\Omega_{(a-1, b+1)}|m^2.\]

Introduce
\begin{equation*}
A_{(a,b)} = \f 1{b-a}\int_{\O_{(a,b)}}|\g\bar{\varphi}_L|^2dx.
\end{equation*}

Then it becomes
\begin{equation}\label{inter1}
A_{(a,b)}
\leq \left(\frac{C_1+C_2\delta }{C_1+1}\right)\f{b-a+2}{b-a}A_{(a-1,b+1)} +  \frac{C_2}{4\delta (C_1+1)}\frac{|\Omega_{(a-1, b+1)}|}{b-a}m^2.
\end{equation}
By choosing constant $\delta$ sufficiently small, one can find the uniform constant $l$ such that, when $b-a>l$,
\begin{equation*}
\left(\frac{C_1+C_2\delta}{C_1+1}\right)\f{b-a+2}{b-a} <\vartheta<1.
\end{equation*}
Hence, \eqref{inter1} becomes 
\[
A_{(a,b)}\leq \vartheta A_{(a-1,b+1)}+ C'm^2.
\]

By Lemma 8.23 in \cite{Gilbarg-Trudinger} and \eqref{L1}, we know that there exists $\alpha>0$ such that 
$$
A_{(a,b)}\leq C\left((\frac{b-a}{2L})^{\alpha}A_{(-L,L)}+m^2\right)\leq C\left((\frac{b-a}{2L})^{\alpha}+1\right)m^2.
$$

If $L>l$, where $l$ is sufficiently large, then \eqref{eq-LAE-1} holds where the constant $C$ does not depend on $L$. 
\end{proof}

Combining Lemma \ref{lem-3}, above lemma leads to 
\begin{lemma}\label{lem-3local}
	Assume (\ref{H1}) holds, then there are constants $0<\alpha<1$ and $C$ independent of $L$,
	 for any $b-a>l$ and $\O_{(a-1, b+1)}\subset \O_{\frac{L}{2}}$,
	such that for any solution $\bar{\varphi}_L\in H_L$ of \textbf{Problem I2 ($m$, $L$)}, 
	$$
	\sup_{x\in\Omega_{(a, b)}}|\nabla\bar\varphi—_L|\leq C m^2,
	$$
	and
	$$
	\sup_{x,y\in\Omega_{(a, b)}} \frac{|\nabla\bar\varphi_L(x)-\nabla\bar\varphi_L(y)|}{|x-y|^\alpha}\leq Cm^2.
	$$
\end{lemma}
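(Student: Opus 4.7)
The plan is to combine the uniform $L^2$ bound from Lemma \ref{th-lae} with the interior Schauder estimate of Lemma \ref{lem-3}, but applied on \emph{translated} sub-slabs of fixed axial length rather than on $\Omega_L$ itself, so that the resulting constant becomes independent of $L$. The crucial structural input is that under assumption \eqref{H1}, the bound $\|T\|_{C^{2,\alpha}} + \|T^{-1}\|_{C^{2,\alpha}} \leq K$ is uniform along the axial direction: any sub-slab $\Omega_{(c, c+d)}$ is $C^{2,\alpha}$-equivalent to $B(0,1) \times (c, c+d)$ with equivalence constants depending only on $K$ and $d$, and not on $c$ or $L$.

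Fix any $x_0=(x_0',x_{0,n})\in \Omega_{(a,b)}$, and consider the thin slab $S_{x_0} := \Omega_{(x_{0,n} - 1/2, x_{0,n} + 1/2)}$ together with the slightly thicker slab $T_{x_0} := \Omega_{(x_{0,n} - 1, x_{0,n} + 1)}$; both lie in $\Omega_{(a-1, b+1)} \subset \Omega_{L/2}$ by hypothesis. Pulling $\bar{\varphi}_L$ back by $T$, the Laplace equation with the Neumann condition on $\partial\Omega\cap\partial T_{x_0}$ becomes a uniformly elliptic linear equation in divergence form with $C^{\alpha}$ coefficients whose norms are controlled purely by $K$. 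The same interior-and-boundary Schauder machinery that underlies Lemma \ref{lem-3} (cf.\ \cite{Gilbarg-Trudinger}) then yields
$$
\sup_{x\in S_{x_0}}|\nabla\bar{\varphi}_L(x)| \;+\; \sup_{\substack{x,y\in S_{x_0}\\ x\ne y}}\frac{|\nabla\bar{\varphi}_L(x)-\nabla\bar{\varphi}_L(y)|}{|x-y|^\alpha} \;\leq\; C\,\|\nabla\bar{\varphi}_L\|_{L^2(T_{x_0})}^2,
$$
where $C$ depends only on $K$ and the fixed axial length, hence on neither $x_{0,n}$ nor $L$.

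It remains to bound $\|\nabla\bar{\varphi}_L\|_{L^2(T_{x_0})}^2$ uniformly by $Cm^2$. Since $T_{x_0}$ has axial length only $2$ it may be shorter than $l$, so Lemma \ref{th-lae} cannot be applied on $T_{x_0}$ directly. Instead we enlarge: choose $a_0 \leq x_{0,n}-1$ and $b_0 \geq x_{0,n}+1$ with $b_0-a_0 = l+1 > l$ and $\Omega_{(a_0-1,b_0+1)}\subset \Omega_{(a-1,b+1)}\subset \Omega_{L/2}$, which is always possible by shifting inside the ambient slab $\Omega_{(a-1,b+1)}$ (near the endpoints of $\Omega_{(a,b)}$ one simply slides the enlarged slab inward, which is permitted by the hypothesis $b-a>l$ and the fact that $\Omega_{(a-1,b+1)}\subset \Omega_{L/2}$). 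Lemma \ref{th-lae} then gives $\|\nabla\bar{\varphi}_L\|_{L^2(\Omega_{(a_0,b_0)})}^2 \leq C(b_0-a_0)m^2 \leq Cm^2$, and since $T_{x_0}\subset \Omega_{(a_0,b_0)}$ the desired uniform control follows. Letting $x_0$ range over $\Omega_{(a,b)}$ and using that the half-slabs $S_{x_0}$ cover $\Omega_{(a,b)}$ yields both inequalities of the lemma. The only mildly delicate step is arranging the enlargement at the endpoints of $\Omega_{(a,b)}$, but this is handled by a routine covering argument together with the uniformity of the constants from \eqref{H1}.
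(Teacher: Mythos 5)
Your argument is correct and is exactly the route the paper intends: the paper states this lemma without proof, as a direct combination of Lemma \ref{lem-3} and Lemma \ref{th-lae}, and your localization of the elliptic estimate to axially translated slabs of fixed length (with constants controlled only by $K$ from \eqref{H1}), fed by the $L$-independent local $L^2$ bound of Lemma \ref{th-lae} applied on slabs of length just above $l$, is precisely the mechanism the paper spells out for the compressible analogue (Lemma \ref{lem-3localcompressible} via Proposition \ref{prop-modify}). The endpoint bookkeeping you flag is indeed routine, since the iteration in the proof of Lemma \ref{th-lae} only requires the expanding slabs to remain inside $\Omega_L$, which leaves ample room to place the enlarged slab near the ends of $\Omega_{(a,b)}$.
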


\subsection{The local average lemma for the compressible case}
Next, let us consider the average lemma for the compressible case. 

\begin{lemma}\label{th-laeca} 
	There is a uniform constant $l$ depending only on $\Omega$, such that for any $b-a>l$ and $\O_{(a-1, b+1)}\subset \O_{\frac{L}{2}}$ , the solution $\tilde{\varphi}_L^{(\varepsilon)}$ of equations (\ref{eqcc}) satisfies
	\begin{equation}\label{eq-ca}
	\frac{1}{|\Omega_{(a, b)}|}\int_{\Omega_{(a, b)}}|\nabla\tilde{\varphi}_L^{(\varepsilon)}|^2 dx \leq Cm^2,
	\end{equation}
	where constant $C$ does not depend on $L$.
\end{lemma}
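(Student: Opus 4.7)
The plan is to adapt the cutoff-and-test-function argument in the proof of Lemma \ref{th-lae} to the difference potential $\tilde{\varphi}_L^{(\varepsilon)}$, using the fact that the density perturbation $(\hat{\rho}^{\varepsilon}-1)/\varepsilon^{2}$ is uniformly bounded in $\varepsilon$ thanks to the subsonic cut-off and the Bernoulli relation. First I would derive the weak equation for $\tilde{\varphi}_L^{(\varepsilon)}$: testing the identity from Proposition \ref{th-ws} against any $\eta\in H_L$, subtracting the weak form \eqref{weakform} for $\bar{\varphi}_L$, substituting $\nabla\varphi_L^{(\varepsilon)}=\nabla\bar{\varphi}_L+\varepsilon^{2}\nabla\tilde{\varphi}_L^{(\varepsilon)}$, and dividing by $\varepsilon^{2}$ yields
\begin{equation*}
\int_{\Omega_L}\left[\frac{\hat{\rho}^{\varepsilon}-1}{\varepsilon^{2}}\,\nabla\bar{\varphi}_L+\hat{\rho}^{\varepsilon}\,\nabla\tilde{\varphi}_L^{(\varepsilon)}\right]\cdot\nabla\eta\, dx=0\qquad\text{for every }\eta\in H_L.
\end{equation*}
Then I would choose the same cutoff $\eta$ and the same modification $\hat\phi$ as in Lemma \ref{th-lae}, but built from $\tilde{\varphi}_L^{(\varepsilon)}$ rather than $\bar{\varphi}_L$, and plug $\eta^{2}\hat\phi\in H_L$ into the above identity.

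The next step is to eliminate the averaged-slope term by comparing mass fluxes. Subtracting the compressible mass-flux identity $\int_{S_{x_n}}\hat{\rho}^{\varepsilon}\partial_n\varphi_L^{(\varepsilon)}\,dx'=m$ from its incompressible counterpart $\int_{S_{x_n}}\partial_n\bar{\varphi}_L\,dx'=m$ and dividing by $\varepsilon^{2}$ gives $\int_{S_{x_n}}[\tfrac{\hat{\rho}^{\varepsilon}-1}{\varepsilon^{2}}\partial_n\bar{\varphi}_L+\hat{\rho}^{\varepsilon}\partial_n\tilde{\varphi}_L^{(\varepsilon)}]\,dx'=0$, so the contribution proportional to $P_{[b,b+1]}(\tilde{\varphi}_L^{(\varepsilon)})-P_{[a-1,a]}(\tilde{\varphi}_L^{(\varepsilon)})$ drops out after integration in $x_n$ (analogue of the cancellation $\int\partial_n\bar{\varphi}_L\,dx=m(b-a)$ in Lemma \ref{th-lae}, but now giving zero). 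What remains is
\begin{equation*}
\int_{\Omega_L}\eta^{2}\!\left[\tfrac{\hat{\rho}^{\varepsilon}-1}{\varepsilon^{2}}\nabla\bar{\varphi}_L+\hat{\rho}^{\varepsilon}\nabla\tilde{\varphi}_L^{(\varepsilon)}\right]\!\cdot\nabla\tilde{\varphi}_L^{(\varepsilon)}\,dx=-2\int_{\Omega_L}\eta\hat\phi\!\left[\tfrac{\hat{\rho}^{\varepsilon}-1}{\varepsilon^{2}}\nabla\bar{\varphi}_L+\hat{\rho}^{\varepsilon}\nabla\tilde{\varphi}_L^{(\varepsilon)}\right]\!\cdot\nabla\eta\,dx.
\end{equation*}
Using $\hat{\rho}^{\varepsilon}\geq c_1>0$ and $|(\hat{\rho}^{\varepsilon}-1)/\varepsilon^{2}|\leq C$ (the latter already established in Step~1 of the proof of Theorem \ref{th-vc}), I would absorb the cross terms containing $\nabla\bar{\varphi}_L\cdot\nabla\tilde{\varphi}_L^{(\varepsilon)}$ into the main coercive piece by Cauchy--Schwarz with a small parameter, and bound the right-hand side (supported in $\Omega_{(a-1,a)}\cup\Omega_{(b,b+1)}$ where $\hat\phi$ has zero mean) via the uniform Poincar\'e inequality of Lemma \ref{th-upi}.

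After these manipulations I would arrive at
\begin{equation*}
\int_{\Omega_{(a,b)}}|\nabla\tilde{\varphi}_L^{(\varepsilon)}|^{2}dx\leq C_1\int_{\Omega_{(a-1,b+1)}\setminus\Omega_{(a,b)}}|\nabla\tilde{\varphi}_L^{(\varepsilon)}|^{2}dx+C_2\int_{\Omega_{(a-1,b+1)}}|\nabla\bar{\varphi}_L|^{2}dx.
\end{equation*}
Invoking Lemma \ref{th-lae} to control the last integral by $Cm^{2}(b-a+2)$, setting $A_{(a,b)}=\frac{1}{b-a}\int_{\Omega_{(a,b)}}|\nabla\tilde{\varphi}_L^{(\varepsilon)}|^{2}dx$ and rearranging exactly as in the incompressible proof produces the recursion $A_{(a,b)}\leq\vartheta A_{(a-1,b+1)}+C'm^{2}$ with $\vartheta<1$ whenever $b-a$ is larger than a uniform $l$. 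Combined with the global bound \eqref{L2} from Theorem \ref{th-vc} and Lemma~8.23 in Gilbarg--Trudinger, this yields \eqref{eq-ca} with a constant independent of both $L$ and $\varepsilon$. The main obstacle is the careful handling of the extra $\nabla\bar{\varphi}_L$-cross terms produced by the density perturbation: they can be absorbed only because $|(\hat{\rho}^{\varepsilon}-1)/\varepsilon^{2}|$ is bounded \emph{uniformly} in $\varepsilon$, so the subsonic cut-off (which is precisely what delivers this bound) is indispensable---without it the recursion would have $\varepsilon$-dependent coefficients and fail to give an $\varepsilon$-uniform estimate, which is the whole point for the low Mach number limit.
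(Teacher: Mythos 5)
Your proposal follows the paper's proof essentially step for step: the same test function $\eta^2\hat\phi$ built from $\tilde{\varphi}_L^{(\varepsilon)}$ is inserted into the weak form of the difference equation $\int_{\Omega_L}(\hat{\rho}^{\varepsilon}\nabla\varphi_L^{(\varepsilon)}-\nabla\bar{\varphi}_L)\cdot\nabla(\eta^2\hat\phi)\,dx=0$ (which, after dividing by $\varepsilon^2$, is exactly your identity), the averaged-slope term cancels by equality of mass fluxes, the cross terms are absorbed using the uniform bound on $(\hat{\rho}^{\varepsilon}-1)/\varepsilon^2$ together with the uniform Poincar\'e inequality, and the resulting recursion is closed via Lemma \ref{th-lae}, the global bound \eqref{L2}, and Lemma 8.23 of Gilbarg--Trudinger. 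The argument is correct and coincides with the paper's.
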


\begin{proof}
For any $-\f L2<a-1<a<b<b+1<\f L2$, define smooth function $\eta$ such that $\eta\in C^\infty (\ol)$, $0\leq\eta\leq 1$, $|\g\eta|\leq 1$, $\eta|_{\O_{(a, b)}}=1$ and $\eta|_{\O-\O_{(a-1, b+1)}}=0$.
Then as done in the proof of Lemma \ref{th-lae}, we introduce
 \begin{equation*}
\hat\fai(x) =
\begin{cases}
\tilde{\varphi}_L^{(\varepsilon)}(x) - {P}_{[a-1, a]}(\tilde{\varphi}_L^{(\varepsilon)}),&\qquad x_n\leq a,\\
\tilde{\varphi}_L^{(\varepsilon)}(x)-{P}_{[a-1, a]}(\tilde{\varphi}_L^{(\varepsilon)}) -({P}_{[b, b+1]}(\tilde{\varphi}_L^{(\varepsilon)})-{P}_{[a-1, a]}(\tilde{\varphi}_L^{(\varepsilon)}))\frac{x_n-a}{b-a},&\qquad a\leq x_n\leq b,\\
\tilde{\varphi}_L^{(\varepsilon)}(x) - {P}_{[b, b+1]}(\tilde{\varphi}_L^{(\varepsilon)}),&\qquad x_n\geq b.
\end{cases}
\end{equation*}

Then the test function $\eta^2\hat\fai$ satisfies that $\eta^2\hat\fai\in H^1(\ol)$ and
$\left(\eta^2\hat\fai\right)|_{x_n=\pm L} = 0$. 

Therefore
$\eta^2\hat\fai\in H_L$ and
\begin{equation*}
\int_{\Omega_L} \left(\hat{\rho}^{\varepsilon}(|\nabla\varphi_L^{(\varepsilon)}|^2, \phi)\nabla\varphi_L^{(\varepsilon)}-\nabla \bar{\varphi}_L \right)\cdot\g(\eta^2\hat\fai)dx = 0.
\end{equation*}

So
\begin{eqnarray}
&&\int_{\Omega_{(a-1,b+1)}}\eta^2\left(\hat{\rho}^{\varepsilon}(|\nabla\varphi_L^{(\varepsilon)}|^2, \phi)\nabla\varphi_L^{(\varepsilon)}-\nabla \bar{\varphi}_L \right)\cdot\g\hat\fai dx
\nonumber\\& =&
-2\int_{\Omega_{(a-1,b+1)}}\eta\left(\hat{\rho}^{\varepsilon}(|\nabla\varphi_L^{(\varepsilon)}|^2, \phi)\nabla\varphi_L^{(\varepsilon)}-\nabla \bar{\varphi}_L \right)\cdot\g\eta\hat\fai dx.\nonumber
\end{eqnarray}

Note that $\g\hat\fai = \g\tilde{\varphi}_L^{(\varepsilon)} -
\frac{{P}_{[b, b+1]}(\tilde{\varphi}_L^{(\varepsilon)})-{P}_{[a-1, a]}(\tilde{\varphi}_L^{(\varepsilon)})}{b-a}\chi_{a,b}(x)\vec{e}_n$. So,
\begin{eqnarray}
 &&\int_{\Omega_{(a-1,b+1)}}
 \eta^2\left(\hat{\rho}^{\varepsilon}(|\nabla\varphi_L^{(\varepsilon)}|^2, \phi)\nabla\varphi_L^{(\varepsilon)}-\nabla \bar{\varphi}_L \right)\cdot\nabla\tilde{\varphi}^{(\varepsilon)}_L dx\nonumber\\
&&+\int_{\Omega_{(a,b)}}
\eta^2\left(\hat{\rho}^{\varepsilon}(|\nabla\varphi_L^{(\varepsilon)}|^2, \phi)\frac{\partial\varphi_L^{(\varepsilon)}}{\partial x_n}-\frac{\partial \bar{\varphi}_L}{\partial x_n} \right)\frac{{P}_{[b, b+1]}(\tilde{\varphi}_L^{(\varepsilon)})-
{P}_{[a-1, a]}(\tilde{\varphi}_L^{(\varepsilon)})}{a-b} dx\nonumber\\
&=& -2\int_{\Omega_{(a-1,b+1)}}\eta\left(\hat{\rho}^{\varepsilon}(|\nabla\varphi_L^{(\varepsilon)}|^2, \phi)\nabla\varphi_L^{(\varepsilon)}-\nabla \bar{\varphi}_L \right)\cdot\g\eta\hat\fai dx. 	
\end{eqnarray}

Since $\eta =1$ on $\O_{a,b}$ and
$\int_{S_{x_n}}\hat{\rho}^{\varepsilon}(|\nabla\varphi_L^{(\varepsilon)}|^2, \phi)\frac{\partial\varphi_L^{(\varepsilon)}}{\partial x_n}dx' =  m=\int_{S_{x_n}}\frac{\partial \bar{\varphi}_L}{\partial x_n}dx'$, the above identity becomes
 \begin{eqnarray}\label{c18}
&&\int_{\Omega_{(a-1,b+1)}}
\frac{\eta^2}{\varepsilon^2}\left(\hat{\rho}^{\varepsilon}(|\nabla\varphi_L^{(\varepsilon)}|^2, \phi)\nabla\varphi_L^{(\varepsilon)}-\nabla \bar{\varphi}_L \right)\cdot\nabla\tilde{\varphi}^{(\varepsilon)}_L dx \nonumber\\
&=& -\frac{2}{\varepsilon^2}\int_{\Omega_{(a-1,b+1)}}\eta\left(\hat{\rho}^{\varepsilon}(|\nabla\varphi_L^{(\varepsilon)}|^2, \phi)\nabla\varphi_L^{(\varepsilon)}-\nabla \bar{\varphi}_L \right)\cdot\g\eta\hat\fai dx.
\end{eqnarray}

For the left hand side of the identity above,
\begin{eqnarray}
\int_{\Omega_{(a-1,b+1)}}
\frac{\eta^2}{\varepsilon^2}\left(\hat{\rho}^{\varepsilon}(|\nabla\varphi_L^{(\varepsilon)}|^2, \phi)\nabla\varphi_L^{(\varepsilon)}-\nabla \bar{\varphi}_L \right)\cdot\nabla\tilde{\varphi}^{(\varepsilon)}_L dx &=&
\int_{\Omega_{(a-1,b+1)}}
\eta^2\hat{\rho}^{\varepsilon}(|\nabla\varphi_L^{(\varepsilon)}|^2, \phi)|\nabla\tilde{\varphi}^{(\varepsilon)}_L|^2 dx
\nonumber\\
&&+\int_{\Omega_{(a-1,b+1)}}
\eta^2\frac{\hat{\rho}^{\varepsilon}(|\nabla\varphi_L^{(\varepsilon)}|^2, \phi)-1}{\varepsilon^2}\nabla \bar{\varphi}_L \cdot\nabla\tilde{\varphi}^{(\varepsilon)}_L dx.\nonumber
\end{eqnarray}

Because $\frac{\hat{\rho}^{\varepsilon}(|\nabla\varphi_L^{(\varepsilon)}|^2, \phi)-1}{\varepsilon^2}$ is bounded, for arbitrary $\nu>0$,
\begin{equation*}
\left|\int_{\O_{(a-1,b+1)}}
\eta^2\frac{\hat{\rho}^{\varepsilon}(|\nabla\varphi_L^{(\varepsilon)}|^2, \phi)-1}{\varepsilon^2}\nabla \bar{\varphi}_L \cdot\nabla\tilde{\varphi}^{(\varepsilon)}_L dx\right|
\leq\nu\int_{\O_{(a-1,b+1)}}|\nabla\tilde{\varphi}^{(\varepsilon)}_L|^2 dx+\frac{C}{\nu}\int_{\O_{(a-1,b+1)}}|\nabla\bar{\varphi}_L|^2 dx
\end{equation*}

For the right hand side of the identity (\ref{c18}) , note that $\nabla\eta=0$ on $\Omega_{[a-1, b+1]}\backslash\Omega_{[a, b]}$, so
\begin{eqnarray}
&&\left|{\varepsilon^{-2}}\int_{\Omega_{(a-1,b+1)}}\eta\left(\hat{\rho}^{\varepsilon}(|\nabla\varphi_L^{(\varepsilon)}|^2, \phi)\nabla\varphi_L^{(\varepsilon)}-\nabla \bar{\varphi}_L \right)\cdot\g\eta\hat\fai dx\right|\nonumber\\
&=&\left|\int_{\Omega_{(a-1,b+1)}}\eta\hat{\rho}^{\varepsilon}(|\nabla\varphi_L^{(\varepsilon)}|^2, \phi)\nabla\tilde{\varphi}_L^{(\varepsilon)} \cdot\g\eta\hat\fai dx +\int_{\Omega_{(a-1,b+1)}}\eta\frac{\hat{\rho}^{\varepsilon}(|\nabla\varphi_L^{(\varepsilon)}|^2, \phi)-1}{\varepsilon^2}\nabla \bar{\varphi}_L \cdot\g\eta\hat\fai dx\right|\nonumber\\
&\leq&C_1\left|\int_{\Omega_{[a-1, b+1]}-\Omega_{[a, b]}} |\nabla\tilde{\varphi}^{(\varepsilon)}_L||\hat{\fai}| dx\right|+C_2\left|\int_{\Omega_{[a-1, b+1]}-\Omega_{[a, b]}} |\nabla\bar{\varphi}_L||\hat{\fai} |dx\right|\nonumber\\
&\leq&C_1\int_{\Omega_{[a-1, b+1]}-\Omega_{[a, b]}} |\nabla\tilde{\varphi}^{(\varepsilon)}_L|^2 dx+C_2\int_{\Omega_{[a-1, b+1]}-\Omega_{[a, b]}} |\nabla\bar{\varphi}_L|^2dx+C_3\int_{\Omega_{[a-1, b+1]}-\Omega_{[a, b]}} |\hat{\fai}|^2dx\nonumber.
\end{eqnarray}

Notice that,
\begin{eqnarray}
\int_{\Omega_{[a-1, b+1]}-\Omega_{[a, b]}} |\hat{\fai}|^2dx&=&\int_{\Omega_{[a-1,a]}} |\hat{\fai}|^2dx+\int_{\Omega_{[b,b+1]}} |\hat{\fai}|^2dx\nonumber\\
&=&\int_{\Omega_{[a-1,a]}} |\tilde{\varphi}_L^{(\varepsilon)}(x) - {P}_{[a-1, a]}(\tilde{\varphi}_L^{(\varepsilon)})|^2dx+\int_{\Omega_{[b,b+1]}} |\tilde{\varphi}_L^{(\varepsilon)}(x) - {P}_{[b, b+1]}(\tilde{\varphi}_L^{(\varepsilon)})|^2dx\nonumber\\
&\leq&C_4\int_{\Omega_{[a-1,a]}} |\nabla\tilde{\varphi}_L^{(\varepsilon)}|^2dx+C_4\int_{\Omega_{[b,b+1]}} |\nabla\tilde{\varphi}_L^{(\varepsilon)}(x) |^2dx\nonumber\\
&=&C_4\int_{\Omega_{[a-1, b+1]}-\Omega_{[a, b]}}|\nabla\tilde{\varphi}_L^{(\varepsilon)}(x) |^2dx.
\end{eqnarray}

Then,  taking $\nu=\frac{\lambda}{2}$, we have  
\begin{eqnarray}
\label{n23}
\frac{\lambda}{2}\int_{\O_{(a,b)}}|\nabla\tilde{\varphi}^{(\varepsilon)}_L|^2 dx&\leq&\int_{\Omega_{(a-1,b+1)}}
\eta^2\hat{\rho}^{\varepsilon}(|\nabla\varphi_L^{(\varepsilon)}|^2)|\nabla\tilde{\varphi}^{(\varepsilon)}_L|^2 dx-\frac{\lambda}{2}\int_{\O_{(a,b)}}|\nabla\tilde{\varphi}^{(\varepsilon)}_L|^2 dx\nonumber\\
&\leq&C_5\int_{\Omega_{(a-1, b+1)}-\O_{(a,b)}}|\nabla\tilde{\varphi}^{(\varepsilon)}_L|^2 dx+C_6\int_{\O_{(a-1,b+1)}}|\nabla\bar{\varphi}_L|^2 dx\nonumber\\
&\leq&C_5\int_{\Omega_{(a-1, b+1)}-\O_{(a,b)}}|\nabla\tilde{\varphi}^{(\varepsilon)}_L|^2 dx+C_7(b-a+2)m^2
\end{eqnarray}
where we have used \eqref{eq-LAE-1} for the last inequality. From \eqref{n23}, we further have that 
\begin{equation}\label{ii1}
\int_{\O_{(a,b)}}|\nabla\tilde{\varphi}^{(\varepsilon)}_L|^2 dx\leq \frac{C_5}{C_5+\frac{\lambda}{2}}\int_{\O_{(a-1,b+1)}}|\nabla\tilde{\varphi}^{(\varepsilon)}_L|^2 dx+\frac{C_7}{C_5+\frac{\lambda}{2}}(b-a+2)m^2
\end{equation}

Set
\begin{equation*}
B_{a,b} = \frac{1}{b-a}\int_{\Omega_{(a,b)}}|\nabla\tilde{\varphi}^{(\varepsilon)}_L|^2dx,
\end{equation*}
It follows from (\ref{ii1}) that
\begin{equation*}
B_{a,b}
\leq\frac{b-a+2}{b-a}\frac{C_5}{C_5+\frac{\lambda}{2}}B_{a-1,b+1} + \frac{C_7}{C_5+\frac{\lambda}{2}}\frac{b-a+2}{b-a}m^2.
\end{equation*}

Now, we take $l<b-a$ such that  $\frac{b-a+2}{b-a}\frac{C_5}{C_5+\frac{\lambda}{2}}<\vartheta_0<1$. Then, for any $-\frac {L}{2}<a-1<a<b<b+1<\frac{ L}{2}$, we have
\begin{equation*}
B_{a,b}\leq\vartheta_0 B_{a-1,b+1} + C_8 m^2,
\end{equation*}
where $C_8$ is uniformly bounded.
Hence, following the same argument in the proof of Lemma \ref{th-lae} to obtain \eqref{eq-LAE-1}, we have \eqref{eq-ca}.
\end{proof}

\begin{lemma}\label{lem-3localcompressible}
	Assume (\ref{H1}) holds, then there are constants $0<\alpha<1$ and $C$ independent of $L$,
	for any $b-a>l$ and $\O_{(a-1, b+1)}\subset \O_{\frac{L}{2}}$,
	such that for any solution $\tilde{\varphi}_L^{(\varepsilon)}\in H_L$ of \textbf{Problem C4 ($m$, $L$)}, 
	$$
	\sup_{x\in\Omega_{(a, b)}}|\nabla\tilde{\varphi}_L^{(\varepsilon)}|\leq C m^2,
	$$
	and
	$$
	\sup_{x, y\in\Omega_{(a, b)}} \frac{|\nabla\tilde{\varphi}_L^{(\varepsilon)}(x)-\nabla \tilde{\varphi}_L^{(\varepsilon)}(y)|}{|x-y|^\alpha}\leq Cm^2.
	$$
\end{lemma}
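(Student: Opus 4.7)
The plan is to follow the same strategy as in the incompressible case (Lemma \ref{lem-3local}): combine the local $L^2$ gradient bound from Lemma \ref{th-laeca} with standard interior elliptic regularity, applied this time to the PDE satisfied by $\tilde\varphi_L^{(\varepsilon)}$ itself rather than by $\varphi_L^{(\varepsilon)}$, so that the resulting bounds are of order $m^2$ and not $m^2/\varepsilon^2$.

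First, I would derive the divergence-form equation for $\tilde\varphi_L^{(\varepsilon)}$. Subtracting $\Delta\bar\varphi_L=0$ from $\mbox{div}\bigl(\hat\rho^{\varepsilon}(|\nabla\varphi_L^{(\varepsilon)}|^2,\phi)\nabla\varphi_L^{(\varepsilon)}\bigr)=0$ and dividing by $\varepsilon^2$ yields
\[
\mbox{div}\bigl(\hat\rho^{\varepsilon}(|\nabla\varphi_L^{(\varepsilon)}|^2,\phi)\,\nabla\tilde\varphi_L^{(\varepsilon)}\bigr)
=-\mbox{div}\!\left(\frac{\hat\rho^{\varepsilon}(|\nabla\varphi_L^{(\varepsilon)}|^2,\phi)-1}{\varepsilon^2}\,\nabla\bar\varphi_L\right).
\]
The leading coefficient $\hat\rho^{\varepsilon}$ is bounded above and below uniformly in $\varepsilon$ (by the subsonic cut-off together with the density bound stated after \eqref{2.21}), so the equation is uniformly elliptic. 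The factor $(\hat\rho^{\varepsilon}-1)/\varepsilon^2$ is likewise uniformly bounded in $L^\infty$, by the explicit computation carried out in Step 1 of the proof of Theorem \ref{th-vc}.

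Next, I would bootstrap from the local $L^2$ bound of Lemma \ref{th-laeca} to the claimed $C^{1,\alpha}$ bound. A Moser/De Giorgi iteration on the above equation first upgrades the local $L^2$ bound on $\nabla\tilde\varphi_L^{(\varepsilon)}$ to a local $L^\infty$ bound on $\tilde\varphi_L^{(\varepsilon)}$, and a Caccioppoli-type estimate applied to difference quotients then gives an $L^\infty$ bound on $\nabla\tilde\varphi_L^{(\varepsilon)}$. Interior Schauder estimates finally deliver the desired $C^\alpha$ bound on the gradient. For Schauder one needs the coefficient $\hat\rho^{\varepsilon}(|\nabla\varphi_L^{(\varepsilon)}|^2,\phi)$ to be Hölder continuous with an $\varepsilon$-independent norm; this follows from Lemma \ref{lem-3local} (local $C^\alpha$ bound on $\nabla\bar\varphi_L$), the expansion $\nabla\varphi_L^{(\varepsilon)}=\nabla\bar\varphi_L+\varepsilon^2\nabla\tilde\varphi_L^{(\varepsilon)}$ together with the previous step's bound on $\nabla\tilde\varphi_L^{(\varepsilon)}$, and the hypothesis \eqref{psicondition1} on $\phi$.

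The main obstacle is keeping every constant independent of both $L$ and $\varepsilon$. The $L$-independence is precisely what forces the use of the local average Lemma \ref{th-laeca} in place of the global estimate of Theorem \ref{th-vc}; the $\varepsilon$-independence is the reason one cannot simply divide a $C^{1,\alpha}$ bound on $\varphi_L^{(\varepsilon)}$ (from the regularity lemma at the end of Section 3) by $\varepsilon^2$, but must instead analyze the equation for $\tilde\varphi_L^{(\varepsilon)}$ directly with coefficients that remain uniformly elliptic and uniformly $L^\infty$-bounded as $\varepsilon\to 0$. Performing the estimates on subdomains $\Omega_{(a,b)}$ of a universal length $\geq l$ and concluding by a covering argument should then yield both stated bounds with constants $C$ independent of $L$.
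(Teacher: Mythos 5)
Your overall framing is right and matches the paper's strategy at the structural level: you work with the equation satisfied by $\tilde\varphi_L^{(\varepsilon)}$ rather than dividing an estimate for $\varphi_L^{(\varepsilon)}$ by $\varepsilon^2$, you identify the uniform ellipticity of $\hat\rho^{\varepsilon}$ and the uniform $L^\infty$ bound on $\varepsilon^{-2}(\hat\rho^{\varepsilon}-1)$ as the two $\varepsilon$-independent ingredients, and you feed in Lemma \ref{th-laeca} for $L$-independence. However, the middle of your regularity chain has two genuine gaps. First, ``a Caccioppoli-type estimate applied to difference quotients'' gives an $L^2$ bound on second derivatives (an $H^1_{loc}$ bound on $\nabla\tilde\varphi_L^{(\varepsilon)}$), not an $L^\infty$ bound on $\nabla\tilde\varphi_L^{(\varepsilon)}$; for a divergence-form equation with merely bounded measurable coefficients the gradient need not be bounded at all, so you cannot reach $\|\nabla\tilde\varphi_L^{(\varepsilon)}\|_{L^\infty}$ by this route before some regularity of the coefficients has entered. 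Second, your Schauder step is circular: the H\"older seminorm of the coefficient $\hat\rho^{\varepsilon}(|\nabla\varphi_L^{(\varepsilon)}|^2,\phi)$ (and of the inhomogeneity $\varepsilon^{-2}(\hat\rho^{\varepsilon}-1)\nabla\bar\varphi_L$) requires a uniform local $C^\alpha$ bound on $\nabla\varphi_L^{(\varepsilon)}$, hence on $\varepsilon^2\nabla\tilde\varphi_L^{(\varepsilon)}$, and the ``previous step'' you invoke supplies only an $L^\infty$ bound, which does not control a H\"older seminorm. One could break this circle with the quasilinear interior gradient-H\"older theory applied first to $\varphi_L^{(\varepsilon)}$, but that is an extra ingredient your proposal does not supply, and it would also demand more of $\phi$ than \eqref{psicondition1} gives.

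The paper avoids both issues by differentiating the equation: setting $\Phi=\partial_k\tilde\varphi_L^{(\varepsilon)}$ and using $\Delta\partial_k\bar\varphi_L=0$, one obtains
$\sum_{i,j}\partial_i(\hat a_{ij}\partial_j\Phi)=-\sum_i\partial_i\bigl(\sum_j f_{ij}\,\partial_j\partial_k\bar\varphi_L\bigr)-\sum_i\partial_i(\hat b_k\partial_i\varphi^{(\varepsilon)})$,
where $f_{ij}=\varepsilon^{-2}(\hat a_{ij}-\delta_{ij})$ is uniformly bounded in $L^\infty$ by the same computation you quote from Theorem \ref{th-vc}, $\partial_j\partial_k\bar\varphi_L$ is locally bounded by interior estimates for the harmonic function $\bar\varphi_L$ together with Lemma \ref{th-lae}, and $\hat b_k\partial_i\varphi^{(\varepsilon)}$ is controlled in $L^q$, $q>n$, by $|\nabla\phi|$. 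The unknown is now $\Phi$ itself, so the De Giorgi--Nash--Moser estimate for divergence-form equations with only $L^\infty$ coefficients and an $L^q$ divergence-form right-hand side (Proposition \ref{prop-modify}, i.e.\ Theorem 8.24 of \cite{Gilbarg-Trudinger}) delivers the $\sup$ bound and the $C^\alpha$ bound on $\nabla\tilde\varphi_L^{(\varepsilon)}$ simultaneously, with no Schauder theory and no H\"older continuity of the coefficients required. This is exactly why \eqref{psicondition1} asks only for $\nabla\phi\in L^q_{loc}$. You should restructure your argument around the differentiated equation and Proposition \ref{prop-modify}; the remaining pieces of your proposal (localization on $\Omega_{(a,b)}$ with $b-a>l$, the boundary version via Theorem 8.29 of \cite{Gilbarg-Trudinger}) then go through as you describe.
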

In order to prove Lemma \ref{lem-3localcompressible}, we need the following proposition.

\begin{proposition}\label{prop-modify}
	Let $a^l_{ij}$ for $i,j = 1, \dots, n$ be $L^{\infty}$ functions on $B_1$, and $\lambda$ be a positive constant. Assume that
	$$
	\forall ~\xi\in\R^n, ~\lambda|\xi|^2\leq \sum_{i, j=1}^n a^l_{ij}\xi_i\xi_j\leq \lambda^{-1} |\xi|^2,
	~\mbox{and} ~f^l_i\in L^q, ~ q >n.
	$$
	Let $w(x)$ be a function in $H^1$. Let $f_i^l$ be functions in $L^q$ with $q>n$. Suppose
	$$
	\sum_{i,j=1}^n\partial_i\left[a^l_{ij}(x) \partial_jw(x)\right] + \sum_{i=1}^n\partial_if^l_i =0
	$$
	holds in the distribution sense. Then $w(x)$ is H\"{o}lder continuous in $B_{{1}/{2}}$ and there exist two constants $0<\alpha\leq 1$, $k$, depending on $\lambda$ such that
	$$
	\sup_{x\in B_{1/2}}|w(x)|\leq k\left(||w||_{L^2(B_1)}+ ||f_i^l||_{L^q(B_1)} \right),
	$$
	$$
	\sup_{x, y\in B_{1/2}}\frac{|w(x)-w(y)|}{|x-y|^{\alpha}}
	\leq k\left(||w||_{L^2(B_1)}+ ||f_i^l||_{L^q(B_1)} \right).
	$$
\end{proposition}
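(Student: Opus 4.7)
The plan is to recognize Proposition \ref{prop-modify} as a standard De Giorgi-Nash-Moser regularity statement for uniformly elliptic divergence-form equations with an $L^q$ forcing ($q>n$) sitting inside a divergence, and to follow Chapter 8 of Gilbarg-Trudinger. The result essentially combines the interior $L^\infty$ bound (Theorem 8.17 there) with the interior H\"older estimate (Theorem 8.24), adjusted to the case where the inhomogeneity takes the form $\partial_i f_i^l$ rather than a zero-order source.

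First I would establish a Caccioppoli-type energy inequality. Fix a ball $B_r(x_0)\subset B_1$, a cutoff $\eta\in C_c^\infty(B_r)$ with $\eta\equiv 1$ on $B_{r/2}$ and $|\nabla\eta|\leq C/r$, and a level $k\in\mathbb{R}$. Testing the distributional equation against $(w-k)_+\eta^2$, the uniform ellipticity bound $\lambda|\xi|^2\leq a_{ij}^l\xi_i\xi_j$ controls the gradient term on the left, while the contribution from $f_i^l$ is handled by H\"older's inequality (using $q>n\geq 2$) and absorbed via Young's inequality. The outcome is
\[
\int_{B_{r/2}}|\nabla(w-k)_+|^2\,dx \leq \frac{C}{r^2}\int_{B_r}(w-k)_+^2\,dx + C\,r^{n(1-2/q)}\,\|f_i^l\|_{L^q(B_r)}^2.
\]
Feeding this into a De Giorgi iteration on nested balls $B_{r_j}$ with $r_j\downarrow 1/2$ and increasing levels $k_j\uparrow k_\infty$, combined with the Sobolev embedding $H^1\hookrightarrow L^{2n/(n-2)}$, closes because $n(1-2/q)>0$; this is precisely the role of $q>n$. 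Applying the same argument to $-w$ produces the two-sided sup estimate on $B_{1/2}$ claimed in the proposition.

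For the H\"older part, the core step is a one-step oscillation decay: for every $B_r(x_0)\subset B_{1/2}$,
\[
\operatorname*{osc}_{B_{r/2}}w \;\leq\; \theta\,\operatorname*{osc}_{B_r}w + C\,r^{\delta}\,\|f_i^l\|_{L^q(B_1)}, \qquad \theta\in(0,1),\ \delta=1-n/q>0.
\]
This follows from the weak Harnack inequality, or equivalently from De Giorgi's measure-decay lemma, applied separately to the nonnegative supersolution $\sup_{B_r}w-w$ and to $w-\inf_{B_r}w$. Standard iteration along a geometric sequence of radii then converts this one-step decay into $\operatorname*{osc}_{B_r}w\leq C r^\alpha(\|w\|_{L^\infty(B_1)}+\|f_i^l\|_{L^q(B_1)})$ for some $\alpha\in(0,1]$, and combining with the $L^\infty$ bound already obtained gives the H\"older estimate in the statement.

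The technically substantive step is the weak Harnack / oscillation-decay lemma, which is the heart of De Giorgi-Nash-Moser theory and relies only on $L^\infty$ ellipticity of the coefficients; everything else is iteration bookkeeping. Since the result is entirely classical in this setting and plays here only an auxiliary role, in the paper itself a single citation to Gilbarg-Trudinger should suffice.
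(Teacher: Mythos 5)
Your proposal is correct and matches the paper's treatment: the paper gives no proof of this proposition, simply citing Theorem 8.24 of Gilbarg--Trudinger, and your sketch is precisely the standard De Giorgi--Nash--Moser argument (Caccioppoli inequality, De Giorgi iteration for the sup bound, weak Harnack/oscillation decay for the H\"older estimate) underlying that cited theorem, with the exponent $q>n$ entering exactly as you describe. Your closing remark that a single citation to \cite{Gilbarg-Trudinger} suffices is exactly what the authors do.
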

The proof of this proposition can be found in \cite{Gilbarg-Trudinger} (see Theorem 8.24).

Based on Proposition \ref{prop-modify}, we can show the $C^{1,\alpha}$-regularity of $\nabla\tilde{\varphi}_L^{(\varepsilon)}$, which is Lemma \ref{lem-3localcompressible}.

\begin{proof} 
	Let
	$\Phi=\partial_k\tilde{\varphi}^{(\varepsilon)}_L$, $\bar{\varphi}'=\partial_k\bar{\varphi}_L$ for $k=1,\dots,n$. Then by the straightforward calculation, $\Phi$ satisfies
$$
\sum_{i, j =1}^n\partial_i\left(\hat{a}_{ij}\partial_j\Phi\right)+\sum_{i=1}\partial_{i}(\hat{b}_k\partial_i\varphi^{(\varepsilon)})+\varepsilon^{-2}\sum_{i, j =1}^n\partial_i\left(\hat{a}_{ij}\partial_j\bar{\varphi}'\right)=0.
$$
	Since $\Delta\bar{\varphi}_L=0$, $\Delta\bar{\varphi}'=0$. We can change the equation above to:
$$
\sum_{i, j =1}^n\partial_i\left(\hat{a}_{ij}\partial_j\Phi\right)=-\varepsilon^{-2}\sum_{i, j =1}^n\partial_i\left((\hat{a}_{ij}-\delta_{ij})\partial_j\bar{\varphi}'\right)-\sum_{i=1}\partial_{i}(\hat{b}_k\partial_i\varphi^{(\varepsilon)}).
$$
	Here, we introduce
	\begin{eqnarray}
	f_{ij}&=&\varepsilon^{-2}(\hat{a}_{ij}-\delta_{ij})\nonumber\\
	&=&\frac{\hat{\rho}^{(\varepsilon)}-1}{\varepsilon^2}\delta_{ij}-\varepsilon^{-2}\frac{\hat{q}_\Lambda(|\nabla\varphi^{(\varepsilon)}|^2, \phi)\partial_i\varphi^{(\varepsilon)}\partial_j\varphi^{(\varepsilon)}}
		{(c^\varepsilon)^2},
	\end{eqnarray}
	then, 
	\begin{equation}
	\varepsilon^{-2}\sum_{i, j =1}^n\partial_i\left((\hat{a}_{ij}-\delta_{ij})\partial_j\bar{\varphi}'\right)=\sum_{i=1}^n\partial_i(\sum_{j=1}^nf_{ij}\partial_{j}\bar{\varphi}'),
	\end{equation}
	Now we are going to show the uniform $L^\infty$ estimate of $f_{ij}$. 
	For the first term,
	\begin{eqnarray}
	\frac{\hat{\rho}^{\varepsilon}-1}{\varepsilon^2}&=&\frac{\hat{\rho}^{\varepsilon}\left(|\nabla\varphi^{(\varepsilon)}|^2, \phi\right)-1}{\varepsilon^2}\nonumber\\
	&=&\frac{\tilde{h}^{-1}\left(\frac{-\varepsilon^2\hat{q}(|\nabla\varphi^{(\varepsilon)}|^2, \phi)}{2}+\tilde{h}(1)\right)-1}{\varepsilon^2}\nonumber\\
	&=&\frac{\tilde{h}^{-1}\left(\frac{-\varepsilon^2\hat{q}(|\nabla\varphi^{(\varepsilon)}|^2, \phi)}{2}+\tilde{h}(1)\right)-\tilde{h}^{-1}(h(1))}{\varepsilon^2}\nonumber\\
	&=&\frac{-\hat{q}(|\nabla{\varphi}^{(\varepsilon)}|^2, \phi)}{2}\int_{0}^{1} 	\left( \tilde{h}^{-1} \right)'\left(-t\frac{\varepsilon^2\hat{q}(|\nabla{\varphi}^{(\varepsilon)}|^2, \phi)}{2}+\tilde{h}(1)\right) dt
\label{diffrho}
	\end{eqnarray}
	For the second term,
	\begin{equation}
	\varepsilon^{-2}\frac{\hat{q}_\Lambda(|\nabla\varphi^{(\varepsilon)}|^2, \phi)\partial_i\varphi^{(\varepsilon)}\partial_j\varphi^{(\varepsilon)}}
	{(c^\varepsilon)^2}=\frac{\hat{q}_\Lambda(|\nabla\varphi^{(\varepsilon)}|^2, \phi)  \partial_i\varphi^{(\varepsilon)}\partial_j\varphi^{(\varepsilon)}}
	{\tilde{p}'(\rho^\varepsilon)}.
	\end{equation}
Therefore, due to the cut-off, we have that the uniform $L^{\infty}$ estimate of $f_{ij}$.

Also, for $i, k=1, \cdots, n$,  
\begin{equation}
|\hat{b}_k\partial_i\varphi^{(\varepsilon)}|\leq C|\partial_k\phi|.
\end{equation}
By $\nabla\phi\in L^q$, we can show $\hat{b}_k\partial_i\varphi^{(\varepsilon)}$ are bounded in $L^q$ for $q>n$.

	By employing Proposition \ref{prop-modify}, Lemma \ref{th-laeca} leads to the interior estimate of $\phi$, which conclude the uniformly $L^\infty$ and  H\"older continuous estimates.
	
	
	
	Next for the boundary estimate near $\partial\Omega$, one can apply Theorem 8.29 in \cite{Gilbarg-Trudinger} to replace Proposition \ref{prop-modify} to follow the arguments above to show the boundary estimates near $\partial\Omega_L$.

\end{proof}

\subsection{Existence of solutions of \textbf{Problem I1 ($m$)} and \textbf{Problem C2 ($m$)}}\label{subsec:existence}
Based on the local average lemmas obtained from the previous subsections, we can pass the limit for $(\bar{\varphi}_L, \tilde{\varphi}^{(\varepsilon)}_L)$ as $L\rightarrow\infty$ to obtain the existence of solutions of \textbf{Problem I1 ($m$)} and \textbf{Problem C2 ($m$)}.

For any fixed suitably
large $L$, according to previous subsections, one can get $H^1$
functions $\bar\varphi_L(x)$ and $\tilde{\varphi}^{(\varepsilon)}_L(x)$, which are the 
weak solution  to \textbf{Problem I2 ($m$, $L$)} and  \textbf{Problem C3 ($m$, $L$)} respectively. Without loss of the generality, $\bar\varphi_L(0)=0$ and $\tilde{\varphi}^{(\varepsilon)}_L(0)=0$. Moreover, Lemma \ref{lem-3local} and Lemma \ref{lem-3localcompressible} showed  $\bar{\varphi}_L\in
C^{1,\a}(\cl\O_{L/2})$ and $\tilde{\varphi}^{(\varepsilon)}_L\in
C^{1,\a}(\cl\O_{L/2})$  with 
\begin{equation*}
\| \bar{\varphi}_L\|_{C^{0,\a}\left(\O_{L/2}\right)}\leq C m^2, \qquad \mbox{and} \qquad\| \tilde{\varphi}^{(\varepsilon)}_L\|_{C^{0,\a}\left(\O_{L/2}\right)}\leq C m^2.
\end{equation*}
Also, for any $\eta\in C^\infty_0(\Omega_L)$, we have 
\begin{equation*}
\int_\Omega \g\bar{\varphi}_L\cdot\g\eta dx=0\qquad\mbox{ and }\qquad\int_{\Omega} \hat{\rho}^{\varepsilon}(|\nabla\varphi^{(\varepsilon)}_L|^2,\phi)\nabla\varphi^{(\varepsilon)}_L\cdot\nabla\eta  dx=0,
\end{equation*}
where $\varphi^{(\varepsilon)}_L=\bar{\varphi}_L+\varepsilon^2\tilde{\varphi}^{(\varepsilon)}_L$. For the mass flux condition,
\begin{equation*}
\int_{S_0}\frac{\partial \bar{\varphi}_L}{\partial \textbf{l}} ds = m\qquad\mbox{and }\qquad \int_{S_0}\hat{\rho}^\varepsilon\left(|\nabla\varphi^{(\varepsilon)}_L|^2, \phi\right)\frac{\partial \varphi^{(\varepsilon)}_L}{\partial \textbf{l}}ds = m,
\end{equation*}
where $S_0$ is an arbitrary cross section of $\Omega_L$,  $\textbf{l}$ are the unit outer normals of  $S_0$.

By a standard diagonal argument, there exist functions  $\bar{\varphi}\in
C^{1,\a}(\O)$  and $\tilde{\varphi}^{(\varepsilon)}\in
C^{1,\a}(\O)$ with the subsequences $\bar{\varphi}_{L_n}$ and $\tilde{\varphi}^{(\varepsilon)}_{L_n}$ such that for any
$K$, for some $\alpha'<\alpha$, $\bar{\varphi}_{L_n}\rightarrow\bar{\varphi}$  and $\tilde{\varphi}^{(\varepsilon)}_{L_n}\rightarrow\tilde{\varphi}^{(\varepsilon)}$ in $C^{1,\a'}
\left(\O_K\right)$ as $n\rightarrow \infty$.
Therefore, $\bar{\varphi}$ is the solution of \textbf{Problem I1($m$)}, and $\varphi^{(\varepsilon)}$ is the solution \textbf{Problem C2 ($m$)} with $\varphi^{(\varepsilon)}=\bar{\varphi}+\varepsilon^2\tilde{\varphi}^{(\varepsilon)}$.

\section{ Uniqueness of Solutions in the Infinity Long Nozzle}

\subsection{Uniqueness of incompressible flow}

In this subsection, we will show the uniqueness of solutions to \textbf{Problem I1 ($m$)}.
\begin{lemma}\label{6.1}
Suppose that $\Omega$  satisfies the assumptions (\ref{H1}), and $\bar\varphi_k, (k = 1, 2)$ are weak solutions to the following problem
	\begin{equation}\label{eqi1}
	\left\{\begin{array}{ll}
	\Delta \bar{\varphi}_k=0,\ \ \ \ & x\in\Omega,\\
	\frac{\partial \bar{\varphi}_k}{\partial  \textbf{n}} = 0,\ \ \ \ &\partial\Omega, \\
	\end{array}\right.
	\end{equation}
	associated with the same incoming mass flux $m$. There exists a constant $\check{C}$ such that $\|\nabla\bar\varphi_k \|_{L^\infty}<\check{C}$. Then
	$$\nabla\bar{\varphi}_1=\nabla\bar\varphi_2 ,\qquad x\in\Omega.$$
\end{lemma}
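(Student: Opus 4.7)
The plan is to set $w=\bar\varphi_1-\bar\varphi_2$ and show $\nabla w\equiv 0$ on $\Omega$. By linearity $w$ is a weak solution of $\Delta w=0$ in $\Omega$ with $\partial w/\partial\mathbf{n}=0$ on $\partial\Omega$, and since both $\bar\varphi_k$ carry the same mass flux $m$, one has $\int_{S_0}\partial w/\partial\mathbf{l}\,ds=0$ on every cross section. In addition, $|\nabla w|\le 2\check C$ globally, which will serve as the only a priori ceiling needed to close the iteration at infinity.

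I would then mimic the test-function construction from the proof of Lemma \ref{th-lae}. Pick a cutoff $\eta\in C^\infty(\Omega)$ with $0\le\eta\le 1$, $|\nabla\eta|\le 1$, $\eta\equiv 1$ on $\Omega_{(a,b)}$ and $\eta\equiv 0$ off $\Omega_{(a-1,b+1)}$, and form
\[
\hat\varphi(x)=\begin{cases}
w(x)-P_{[a-1,a]}(w), & x_n\le a,\\
w(x)-P_{[a-1,a]}(w)-\bigl(P_{[b,b+1]}(w)-P_{[a-1,a]}(w)\bigr)\tfrac{x_n-a}{b-a}, & a\le x_n\le b,\\
w(x)-P_{[b,b+1]}(w), & x_n\ge b.
\end{cases}
\]
Plugging $\eta^2\hat\varphi\in H^1(\Omega)$ (compactly supported) into the weak form of $\Delta w=0$, expanding $\nabla\hat\varphi$, and using the zero-mass-flux identity $\int_{S_{x_n}}\partial_{x_n}w\,dx'=0$ on $\Omega_{(a,b)}$, the cross term involving $P_{[b,b+1]}(w)-P_{[a-1,a]}(w)$ vanishes and the integration by parts reduces to
\[
\int_{\Omega}\eta^2|\nabla w|^2\,dx=-2\int_{\Omega}\eta\,\hat\varphi\,\nabla w\cdot\nabla\eta\,dx.
\]
This is the analogue of the identity in Lemma \ref{th-lae} but without the forcing $m(P_{[b,b+1]}(w)-P_{[a-1,a]}(w))$. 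Cauchy--Schwarz on the corner strips $\Omega_{(a-1,a)}\cup\Omega_{(b,b+1)}$ combined with the uniform Poincar\'e inequality (Lemma \ref{th-upi}) applied to $\hat\varphi$ yields, after the usual absorption,
\[
\int_{\Omega_{(a,b)}}|\nabla w|^2\,dx\le \vartheta\int_{\Omega_{(a-1,b+1)}}|\nabla w|^2\,dx
\]
for some $\vartheta\in(0,1)$ independent of $a,b$.

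Iterating this purely contractive recursion $n$ times gives
\[
\int_{\Omega_{(a,b)}}|\nabla w|^2\,dx\le\vartheta^n\int_{\Omega_{(a-n,b+n)}}|\nabla w|^2\,dx\le C(\check C,\Omega)\,\vartheta^n(b-a+2n),
\]
where the last inequality uses both $|\nabla w|\le 2\check C$ and the uniform volume growth $|\Omega_{(a-n,b+n)}|\le C(b-a+2n)$ guaranteed by (\ref{H1}). Sending $n\to\infty$ forces $\int_{\Omega_{(a,b)}}|\nabla w|^2\,dx=0$, and since $(a,b)$ is arbitrary, $\nabla w\equiv 0$ on $\Omega$. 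The main technical point that makes this work, and the only reason the statement needed the hypothesis $\|\nabla\bar\varphi_k\|_{L^\infty}<\check C$, is that in an unbounded nozzle one cannot justify global energy integration directly; the $L^\infty$ bound supplies just enough control of the tails to outrun the geometric factor $\vartheta^n$ in the iteration. In contrast to Lemma \ref{th-lae}, the absence of the $m$ source term is essential, since a constant right-hand side would only yield boundedness, not the desired $\nabla w=0$.
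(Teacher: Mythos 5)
Your proof is correct and follows essentially the same route as the paper: form $w=\bar\varphi_1-\bar\varphi_2$, test against $\eta^2\hat\varphi$ with the piecewise-corrected averages, use the equal-mass-flux cancellation to kill the cross term, hole-fill to get a contraction with ratio $\vartheta<1$, and iterate using the $L^\infty$ bound to control the tail. The only cosmetic difference is at the last step: the paper applies the hole-filling inequality once more to reduce the tail to two unit-width end strips (giving a bound independent of $n$), while you bound the whole expanded domain by its volume, so that $\vartheta^n(b-a+2n)\to 0$; both are valid.
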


\begin{proof} 
 Set $\check{\varphi} = \bar\varphi_1 - \bar\varphi_2$. Then $\check{\varphi}$
satisfies
\begin{equation}\label{eqid1}
	\left\{\begin{array}{ll}
	\Delta\check{\varphi}=0,\ \ \ \ & x\in\Omega,\\
	\frac{\partial \check{\varphi}}{\partial  \textbf{n}} = 0,\ \ \ \ &\partial\Omega.\\
	\end{array}\right.
	\end{equation}

Let $\eta(x)$ be a $C^\infty_0$ function satisfying:
$\eta|_{\O_{(-L, L)}} = 1$, $ \eta|_{\O-\O_{(-L-1, L+1)}} = 0,$ and $|\nabla \eta|\leq 1$.
And
$$\hat\psi(x)=
\left\{\begin{array}{ll}
\check{\varphi}(x)-\check{\varphi}_L^-,\ \ \ \ \ \ \ \ \ \ \ \ \ & x\in\Omega_{(-L-1, -L)},\\
\check{\varphi}(x)-\check{\varphi}_L^- -\frac{\check{\varphi}_L^{+}-\check{\varphi}_L^-}{2L}(x_n+L),\ \ \ & x\in\Omega_{(-L, L)},\\
\check{\varphi}(x)-\check{\varphi}_L^+, \ &  x\in\Omega_{(L, L+1)},
\end{array}
\right.
$$
where 
$\check{\varphi}_L^- = \frac{1}{|\Omega_{(- L-1, -L)}|}\int_{\Omega_{(- L-1, -L)}}\check{\varphi}(x) dx$, and
$\check{\varphi}_L^+ = \frac{1}{|\Omega_{(L, L+1)}|}\int_{\Omega_{(L, L+1)}}\check{\varphi}(x) dx$.

Multiplying on the both sides of the first
equation in (\ref{eqid1}) by $\eta^2 \hat\psi$, and integrating it over $\Omega_L$, one
obtains
\begin{eqnarray}
\label{ni7}
&&\int_{\Omega_{(-L-1,L+1)}}\eta^2 |\nabla\check{\varphi}|^2
dx -
\frac{\check{\varphi}_L^+ -\check{\varphi}_L^-}{2L}\int_{\Omega_{(-L,L)}}\eta^2\frac{\partial \check{\varphi}}{\partial x_n}dx\nonumber\\
&=& -2\int_{\Omega_{(-L-1,-L)}}\eta (\check{\varphi} -\check{\varphi}_L^-)
\nabla \eta
\nabla \check{\varphi} dx-2\int_{\Omega_{(L,L+1)}}\eta (\check{\varphi} -\check{\varphi}_L^+)\nabla \eta\cdot
\nabla \check{\varphi} dx.
\end{eqnarray}

The second term in (\ref{ni7}) vanishes due to the cancellation of mass flux condition.
Similar to the calculation in Lemma \ref{th-lae},  we have
\begin{equation}\label{n8-1}
\int_{\Omega_{(-L,L)}}|\nabla\check{\varphi}|^2 dx \leq C_3
\int_{\Omega_{(-L-1,-L)}\cup\ \Omega_{(L,L+1)}}|\nabla\check{\varphi}|^2 dx,
\end{equation}
where $C_3$ is independents on $L$.
Then, we can have the iteration inequality:
\begin{equation} \int_{\Omega_{(-L,L)}}|\nabla\check{\varphi}|^2 dx \leq
\bar\vartheta\int_{\Omega_{(-L-1,L+1)}}|\nabla\check{\varphi}|^2 dx,
\end{equation}
where $\frac{C_3}{C_3+1}=:\bar\vartheta<1$ is a uniform constant.
By repeating the previous argument, one can get: for $n\geq1$
\begin{eqnarray}\label{n9}
\int_{\Omega_{(-L,L)}}|\nabla\check{\varphi}|^2 dx &\leq&\bar\vartheta^{n}
\int_{\Omega_{(-L-n,L+n)}}|\nabla\check{\varphi}|^2 dx\nonumber\\
&\leq&\bar\vartheta^{n} C_3\int_{\Omega_{(-L-n-1,-L-n)}\cup\ \Omega_{(L+n,L+n+1)}}|\nabla\check{\varphi}|^2 dx\nonumber\\
&\leq& 2\bar\vartheta^{n} \check{C}|S_{max}|,
\end{eqnarray}
where $|S_{max}|$ denotes the maximal of the cross section of the nozzle, and we used $\|\nabla\bar{\varphi}_k \|_{L^\infty}<\check{C}$, for $k=1,2$.
Taking $n\rightarrow\infty$ in (\ref{n9}) yields
$$
\nabla\check{\varphi}=0\ \ \ \ \mbox{in} \ \ \Omega_{L}.
$$
Then, it yields for  $x\in\Omega$, $\nabla\bar{\varphi}_1=\nabla\bar\varphi_2 $.
\end{proof}

Based on Lemma \ref{6.1}, we can conclude the proof of Theorem \ref{MainT1}.
\begin{proof}[Proof of Theorem \ref{MainT1}]
It is easy to see the existence and uniqueness of a classic solution in Theorem \ref{MainT1} follows directly from subsection \ref{subsec:existence} and Lemma \ref{6.1}. Moreover, it follows from Lemma \ref{lem-3local} that $\bar{u}=\nabla\bar{\varphi}\in(C^{\alpha}(\Omega))^n$.  It is noticeable that $\phi\in W^{1 , q}_{loc}$ for $q>n$, which leads $\phi$ is in the H\"{o}lder space for some $\alpha$. By \eqref{pbar}, $\bar{p}\in C^\alpha(\Omega)$.
\end{proof}

\subsection{Uniqueness of compressible flow $\varphi^{(\varepsilon)}$}

\begin{lemma} 
Suppose that $\Omega$ satisfies the assumptions (\ref{H1}), and $\varphi^{(\varepsilon)}_{k}$  ($k=1, 2$) are the solutions of \textbf{Problem C2 ($m$)}, with $\|\nabla\varphi^{(\varepsilon)}_k \|_{L^\infty}<\hat{C}$ . Then, for $x\in \Omega$, $\nabla \varphi^{(\varepsilon)}_ {1}(x) = \nabla \varphi^{(\varepsilon)}_{2}(x)$.
\end{lemma}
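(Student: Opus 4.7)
The plan is to mimic the strategy of Lemma~\ref{6.1} after linearizing the compressible equation around a convex combination of the two solutions. Setting $\check{\varphi}=\varphi^{(\varepsilon)}_{1}-\varphi^{(\varepsilon)}_{2}$ and $F(p):=\hat{\rho}^{\varepsilon}(|p|^{2},\phi)\,p$, I would write
\begin{equation*}
F(\nabla\varphi^{(\varepsilon)}_{1})-F(\nabla\varphi^{(\varepsilon)}_{2})=A(x)\nabla\check{\varphi},\qquad A(x):=\int_{0}^{1}F'\bigl(t\nabla\varphi^{(\varepsilon)}_{1}+(1-t)\nabla\varphi^{(\varepsilon)}_{2}\bigr)\,dt,
\end{equation*}
so that the difference of the two equations $\eqref{equ-modify}_{1}$ becomes $\mbox{div}(A(x)\nabla\check{\varphi})=0$ in $\Omega$. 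A direct computation shows that the entries of $F'(p)$ coincide with $\hat{a}_{ij}(p,\phi)$ from \eqref{li-3.2}, so $A(x)$ is symmetric and satisfies the uniform ellipticity bound \eqref{li-3.3} with constants depending only on the subsonic cut-off parameters $\theta$ and $\varepsilon_{0}$. Subtracting the two slip conditions gives $A(x)\nabla\check{\varphi}\cdot\mathbf{n}=0$ on $\partial\Omega$, while subtracting the two identical mass flux identities yields $\int_{S_{x_{n}}}A(x)\nabla\check{\varphi}\cdot\mathbf{e}_{n}\,ds=0$ on every cross section.

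Next I would carry out the truncation-and-testing argument from Lemma~\ref{6.1} essentially verbatim: choose the cut-off $\eta$ with $\eta|_{\Omega_{(-L,L)}}=1$ and supported in $\Omega_{(-L-1,L+1)}$, form the auxiliary function
\begin{equation*}
\hat{\psi}(x)=\begin{cases}\check{\varphi}(x)-\check{\varphi}_{L}^{-}, & x\in\Omega_{(-L-1,-L)},\\
\check{\varphi}(x)-\check{\varphi}_{L}^{-}-\frac{\check{\varphi}_{L}^{+}-\check{\varphi}_{L}^{-}}{2L}(x_{n}+L), & x\in\Omega_{(-L,L)},\\
\check{\varphi}(x)-\check{\varphi}_{L}^{+}, & x\in\Omega_{(L,L+1)},\end{cases}
\end{equation*}
with $\check{\varphi}_{L}^{\pm}$ the cross-sectional averages of $\check{\varphi}$, and test the equation $\mbox{div}(A\nabla\check{\varphi})=0$ against $\eta^{2}\hat{\psi}$. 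Integration by parts combined with the zero linearized mass flux makes the $(\check{\varphi}_{L}^{+}-\check{\varphi}_{L}^{-})$ contribution drop out, while the uniform ellipticity of $A$ bounds the remaining bulk term from below by $\hat{\lambda}_{1}\int\eta^{2}|\nabla\check{\varphi}|^{2}\,dx$. Estimating the right-hand side with Cauchy--Schwarz and the Poincar\'e inequality of Lemma~\ref{th-upi} produces the Saint-Venant-type estimate
\begin{equation*}
\int_{\Omega_{(-L,L)}}|\nabla\check{\varphi}|^{2}\,dx\le C_{3}\int_{\Omega_{(-L-1,-L)}\cup\Omega_{(L,L+1)}}|\nabla\check{\varphi}|^{2}\,dx,
\end{equation*}
with $C_{3}$ independent of $L$, in exact analogy with \eqref{n8-1}.

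The a priori bound $\|\nabla\varphi^{(\varepsilon)}_{k}\|_{L^{\infty}}<\hat{C}$ then allows me to iterate as in \eqref{n9}: shifting the outer strip further out $n$ times produces a geometric factor $\bar{\vartheta}^{n}$ in front of a quantity controlled by $2\hat{C}^{2}|S_{\max}|$, and letting $n\to\infty$ yields $\nabla\check{\varphi}\equiv 0$ in every bounded $\Omega_{(-L,L)}$, hence in all of $\Omega$. The hard part is really the first step: verifying that the linearization $A(x)$ is uniformly elliptic with constants independent of $\varepsilon$, and that the nonlocal cancellation on cross sections survives the linearization. Both points hinge on the algebraic identity $F'=\hat{a}$ together with the careful design of the subsonic cut-off in Section~3; once they are in place, the rest of the argument is a direct transcription of the proof of Lemma~\ref{6.1}.
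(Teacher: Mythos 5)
Your proposal is correct and follows essentially the same route as the paper: the paper also linearizes via the fundamental theorem of calculus along the segment joining $\nabla\varphi^{(\varepsilon)}_1$ and $\nabla\varphi^{(\varepsilon)}_2$, obtaining coefficients $A_{ij}=\int \big(\hat{\rho}^{\varepsilon}\delta_{ij}+2\hat{\rho}^{\varepsilon}_{\Lambda}\partial_i\varphi^{(\varepsilon)}_s\partial_j\varphi^{(\varepsilon)}_s\big)\,ds$ (exactly your $F'=\hat{a}$ identity), and then runs the same truncated test-function, mass-flux cancellation, Saint-Venant, and geometric-iteration argument as in the incompressible Lemma \ref{6.1}. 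No substantive differences.
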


\begin{proof} Set $\check{\varphi}^{(\varepsilon)}= \varphi^{(\varepsilon)}_{1} - \varphi^{(\varepsilon)}_{2}$. Then $\check{\varphi}^{(\varepsilon)}$
satisfies
\begin{equation}\label{n5}
\left
 \{\begin{array}{ll}
 \displaystyle{\partial_i(A_{ij}\partial_j\check{\varphi}^{(\varepsilon)})=0},\ \ \ \ \ \ & in\ \ \ \Omega,\\[3mm]
  \displaystyle{\frac{\partial\varphi}{\partial\textbf{n}}=0,}& on\ \ \
  \p\O,\\[3mm]
 \end{array}
 \right.
\end{equation}
where
$$
A_{ij}=\int_1^2
\left(\hat{\rho}^{\varepsilon}(|\nabla\varphi^{(\varepsilon)}_s|^2, \phi)\delta_{ij}+2\hat{\rho}^{\varepsilon}_\Lambda(|\nabla\varphi^{(\varepsilon)}_s|^2, \phi)\partial_i\varphi^{(\varepsilon)}_s\partial_j\varphi^{(\varepsilon)}_s\right) ds,
$$
with $\varphi^{(\varepsilon)}_s = (2-s)\varphi^{(\varepsilon)}_1 + (s-1)\varphi^{(\varepsilon)}_2$ .
Moreover, there exist a positive
constant $\lambda$, such that for any vector $\xi\in
\mathbb{R}^n$
\begin{equation}\label{n6}
\lambda |\xi|^2 < {A}_{ij} \xi_i\xi_j < \lambda^{-1} |\xi|^2.
\end{equation}

Let $\eta(x) = \eta(x_n)$ be a $C^\infty_0$ function satisfying
$$
\eta(x_n) \equiv1\ \  \text{for}\ \ |x_n|\leq L;\ \ \ \ \eta(x_n)
\equiv0\ \ \text{for}\ \ |x_n|\geq L+1, \ \ \ and\ \
|\eta'(x_n)|\leq 1,
$$
 For $L > 0$
$$\hat\phi(x)=
\left\{\begin{array}{ll}
\check{\varphi}^{(\varepsilon)}(x)-\check{\varphi}^{(\varepsilon)}_{L-},\ \ \ \ \ \ \ \ \ \ \ \ \ & x\in\Omega_{(-L-1, -L)},\\
\check{\varphi}^{(\varepsilon)}(x)-\check{\varphi}^{(\varepsilon)}_{L-}-\frac{\check{\varphi}^{(\varepsilon)}_{L+}-\check{\varphi}^{(\varepsilon)}_{L-}}{2L}(x_n+L),\ \ \ & x\in\Omega_{(-L, L)},\\
\check{\varphi}^{(\varepsilon)}(x)-\check{\varphi}^{(\varepsilon)}_{L+}, \ &  x\in\Omega_{(L, L+1)},
 \end{array}
 \right.
$$
where $$\check{\varphi}^{(\varepsilon)}_{L-} = \frac1{|\Omega_{ (-L-1,
-L)}|}\int_{\Omega_{(- L-1, -L)}}\check{\varphi}^{(\varepsilon)}(x) dx, \ \ \
\check{\varphi}^{(\varepsilon)}_{L+} = \frac1{|\Omega_{(L, L+1)}|}\int_{\Omega_{(
L, L+1)}}\check{\varphi}^{(\varepsilon)}(x) dx.
$$
Note that $\nabla\hat\phi=\nabla\check{\varphi}^{(\varepsilon)}
-\frac{\check{\varphi}^{(\varepsilon)}_{L+}-\check{\varphi}^{(\varepsilon)}_{L+}}{2L}\chi_{-L,L}(x_n)\vec{e}_n$,
$\vec{e}_n=(0, \cdots, 0, 1)$. Here,  $\chi_{-L,L}(x_n)$ is the characteristic function of $(-L,L)$.

Multiplying on the both sides of the first
equation in (\ref{n5}) by $\eta^2 \hat\phi$, and integrating it over $\Omega$, one
obtains
\begin{eqnarray}\label{n7}
&&\int_{\Omega_{(-L-1,L+1)}}\eta^2 A_{ij}\partial_i \check{\varphi}^{(\varepsilon)}
\partial_j \check{\varphi}^{(\varepsilon)} dx -
\frac{\check{\varphi}^{(\varepsilon)}_{L+}-\check{\varphi}^{(\varepsilon)}_{L-}}{2L}\int_{\Omega_{(-L,L)}}\eta^2 A_{nj}
\partial_j \check{\varphi}^{(\varepsilon)} dx\nonumber\\
&=& -2\int_{\Omega_{(-L-1,-L)}}\eta (\check{\varphi}^{(\varepsilon)}-\check{\varphi}^{(\varepsilon)}_{L-})
A_{ij}\partial_i\eta
\partial_j\check{\varphi}^{(\varepsilon)} dx\nonumber\\
&&-2\int_{\Omega_{(L,L+1)}}\eta (\check{\varphi}^{(\varepsilon)} -\check{\varphi}^{(\varepsilon)}_{L+})
A_{ij}\partial_i\eta
\partial_j\check{\varphi}^{(\varepsilon)} dx.
\end{eqnarray}
The second integral on the left hand side vanishes. Indeed,
\begin{eqnarray}
&&\int_{\Omega_{(-L,L)}}\eta^2 A_{nj}
\partial_j \check{\varphi}^{(\varepsilon)} dx\nonumber\\
&=&\int_{\Omega_{(-L,L)}}\eta^2(\hat{\rho}^{\varepsilon}(|\nabla\varphi^{(\varepsilon)}_1|^2, \phi)\nabla\varphi^{(\varepsilon)}_1-\hat{\rho}^{\varepsilon}(|\nabla\varphi^{(\varepsilon)}_2|^2, \phi)\nabla\varphi^{(\varepsilon)}_2)\cdot
\vec{e}_ndx\nonumber
 \end{eqnarray}
since the two solutions possess the same mass flux $m$.

It follows from (\ref{n7}) and (\ref{n6}) that
\begin{eqnarray}
&&\lambda\int_{\Omega_{(-L,L)}}|\nabla\check{\varphi}^{(\varepsilon)}|^2 dx\nonumber\\
&\leq& 4\lambda^{-1}\int_{\Omega_{(-L-1,-L)}}|\check{\varphi}^{(\varepsilon)}-\check{\varphi}^{(\varepsilon)}_{L-}|
|\nabla\check{\varphi}^{(\varepsilon)}| dx +
4\lambda^{-1}\int_{\Omega_{(L,L+1)}}|\check{\varphi}^{(\varepsilon)}-\check{\varphi}^{(\varepsilon)}_{L+}| |\nabla\check{\varphi}^{(\varepsilon)}|
dx\nonumber\\
&\leq& C\int_{\Omega_{(-L-1,-L)}\cup\ \Omega_{(L,L+1)}}|\nabla\check{\varphi}^{(\varepsilon)}|^2 dx.
 \end{eqnarray}
where $C$ is independent of $L$. We have
\begin{equation}\label{n8}
\lambda\int_{\Omega_{(-L,L)}}|\nabla\check{\varphi}^{(\varepsilon)}|^2 dx\leq C\int_{\Omega_{(-L-1,-L)}\cup\ \Omega_{(L,L+1)}}|\nabla\check{\varphi}^{(\varepsilon)}|^2 dx.
\end{equation}
and
\begin{equation}\label{n8-11}
\int_{\Omega_{(-L,L)}}|\nabla\check{\varphi}^{(\varepsilon)}|^2 dx\leq \bar{\vartheta}\int_{\Omega_{(-L-1,-L)}\cup\ \Omega_{(L,L+1)}}|\nabla\check{\varphi}^{(\varepsilon)}|^2 dx,
\end{equation}
where $\frac{C}{C+\lambda}=:\bar{\vartheta}<1$
By repeating the previous argument, one can get: for $n\geq1$
\begin{eqnarray}\label{n9-1}
\int_{\Omega_{(-L,L)}}|\nabla\check{\varphi}^{(\varepsilon)}|^2 dx &\leq&\bar\vartheta^{n}
\int_{\Omega_{(-L-n,L+n)}}|\nabla\check{\varphi}^{(\varepsilon)}|^2 dx\nonumber\\
&\leq&\bar\vartheta^{n} C_3\int_{\Omega_{(-L-n-1,-L-n)}\cup\ \Omega_{(L+n,L+n+1)}}|\nabla\check{\varphi}^{(\varepsilon)}|^2 dx\nonumber\\
&\leq& 2\bar\vartheta^{n} \hat{C}|S_{max}|,
\end{eqnarray}
where we used $\|\nabla{\varphi}^{(\varepsilon)}_k \|_{L^\infty}<\hat{C}$, for $k=1,2$.
Taking $n\rightarrow\infty$ in (\ref{n9-1}) yields
$$
\nabla\check{\varphi}^{(\varepsilon)}=0\ \ \ \ \mbox{in} \ \ \Omega_{L}.
$$
Then, it yields for  $x\in\Omega$, $\nabla\varphi^{(\varepsilon)}_1=\nabla\varphi^{(\varepsilon)}_2 $.
\end{proof}

\begin{remark}
It is easy to see that when $|\nabla\varphi^{(\varepsilon)}|<\mathring{q}^{\varepsilon_0}_\theta(\phi)$, $\varphi^{(\varepsilon)}$ is the same solution as the one obtained in \cite{Du-Yan-Xin}.
\end{remark}




\section{Proof of Theorem \ref{MainT2}}

In this section, we will conclude the proof of Theorem \ref{MainT2} by showing the solutions of \textbf{Problem C2 ($m$)} are solutions of \textbf{Problem C1 ($m$)}, and then consider the convergent rate of the low Mach number limit.

\begin{proof}
	
	Up to now, we have shown that for a given fixed cut-off parameter $\theta$ and $\varepsilon_0$, there exists a unique solution of \textbf{Problem C2 ($m$)}, which is denoted as $\varphi^{(\varepsilon)}(x;\varepsilon_0, \theta)$. 
	It is noticeable that for a given $\theta\in(0,1)$, if $|\nabla\varphi^{(\varepsilon)}(x;\varepsilon_0, \theta)|<\mathring{q}^{\varepsilon_0}_\theta(\phi)$, then $\varphi^{(\varepsilon)}(x;\varepsilon_0, \theta)$ is the unique solution of \textbf{Problem C2 ($m$)}. Note that
	\begin{equation}\label{4.13}
	|\nabla\varphi^{(\varepsilon)}(x;\varepsilon_0, \theta)|= |\varepsilon^2\nabla\tilde{\varphi}^{(\varepsilon)}(x;\varepsilon_0, \theta)+\nabla\bar{\varphi}(x)|\leq \max|\nabla\bar{\varphi}|+C(\varepsilon_0, \theta) \varepsilon^2.
	\end{equation}
	Hence, there exists $\varepsilon_{0, \theta}\leq \varepsilon_0$ such that  $|\nabla\varphi^{(\varepsilon)}(x;\varepsilon_0, \theta)|<\mathring{q}^{\varepsilon_0}_\theta(\phi)$,  for any $0<\varepsilon<\varepsilon_{0, \theta}$. Form the definition and uniqueness of $\varphi^{(\varepsilon)}$, $\{\varepsilon_{0, \theta}\}$ is a non-decreasing sequence respect to $\theta$ with upper bound $\varepsilon_0$. Then, we introduce $\varepsilon_{0, cr}=\varlimsup_{0<\theta<1}\varepsilon_{0, \theta}$ such that for $0<\varepsilon<\varepsilon_{0, cr}$, there exists a unique  $\tilde{\varphi}(x; \varepsilon_0)$,
	\begin{equation}
	|\nabla \varphi^{(\varepsilon)}(x;\varepsilon_0)|=|\varepsilon^2\nabla\tilde{\varphi}^{(\varepsilon)}(x;\varepsilon_0)+\nabla\bar{\varphi}(x)|< \mathring{q}^{\varepsilon_0}_{cr}(\phi),
	\end{equation} 
	which equals $M^\varepsilon(\phi)<1$.  In this case, the cut-off can be removed such that the solution is a solution of \textbf{Problem C1 ($m$)}.
	After removing the subsonic cut-off, we want to optimise the critical $\varepsilon_{cr}$. 
	For each $0<\varepsilon_0<1$,  there exists an $\varepsilon_{0, cr}$, with $0<\varepsilon_{0, cr}\leq \varepsilon_0<1$. Then, the critical $\varepsilon_c=\sup_{0<\varepsilon<1}\varepsilon_{0, cr}$ satisfies for any $\varepsilon\in(0, \varepsilon)$, $0<M^\varepsilon(\phi)<1$, and $|\nabla\tilde{\varphi}^{(\varepsilon)}|$ is uniform bounded respect $\varepsilon$.
	
	%
	%
	
Finally, let us consider the convergence rate of the low Mach number limit. Note that $\varphi^{(\varepsilon)}=\bar{\varphi}+\varepsilon^2\tilde{\varphi}^{(\varepsilon)}$ in the H\"{o}lder space, so $\nabla\varphi^{(\varepsilon)}=\nabla\bar{\varphi}+\nabla\tilde{\varphi}^{(\varepsilon)}$, which equals to
\begin{equation}\label{uconergencerate}
u^{\varepsilon}=\bar{u}+\varepsilon^2\tilde{u}^{(\varepsilon)}.
\end{equation}
It is noticeable that $\phi\in W^{1, q}_{loc}$ for $q>n$, so $\phi$ is in some H\"{o}lder space.
Therefore, for the density, by \eqref{2.21} $\rho^\varepsilon\in C^\alpha(\Omega)$. Then $p^\varepsilon\in C^\alpha(\Omega)$. By the straightforward computation like in \eqref{diffrho}, we have   
\begin{equation}\label{rhoconergencerate}
\rho^{\varepsilon}=1+O(\varepsilon^2).
\end{equation} 
Consequently, the definition of the Mach number leads to $M^\varepsilon=O(\varepsilon)$.
Finally, for the gradients of the pressure, we have
\begin{eqnarray}
\nabla p^\varepsilon-\nabla \bar{p}&=& -\mbox{div}(\rho^\varepsilon u^\varepsilon\otimes u^\varepsilon)+\mbox{div}(\bar{u}\otimes \bar{u})\nonumber\\
&=&\mbox{div}( \bar{u}\otimes \bar{u}-\rho^\varepsilon u^\varepsilon\otimes u^\varepsilon )
\end{eqnarray}
From \eqref{uconergencerate} and \eqref{rhoconergencerate}, we can conclude: in the weak sense, 
\begin{equation}
\nabla p^\varepsilon=\nabla \bar{p}+O(\varepsilon^2).
\end{equation}

	It completes the proof of Theorem \ref{MainT2}.
\end{proof}

\bigskip
\textbf{Ackowledgments}:  The authors would like to thank Professor Song Jiang for evaluable suggestions. 
The research of Mingjie Li is supported by the NSFC Grant No. 11671412. The research of Tian-Yi Wang  was supported in part by the NSFC Grant No. 11601401 and the Fundamental Research Funds for the Central Universities(WUT: 2017 IVA 072 and 2017 IVB 066). The research of Wei Xiang was supported in part by the Grants Council of the HKSAR, China (Project No. CityU 21305215, Project No. CityU 11332916, Project No. CityU 11304817 and Project No. CityU 11303518).

\end{document}